\newtheorem{thm}{Theorem}[section]
\newtheorem{cor}[thm]{Corollary}
\newtheorem{lem}[thm]{Lemma}
\newtheorem{prop}[thm]{Proposition}
\theoremstyle{definition}
\newtheorem{defn}[thm]{Definition}
\newtheorem{example}[thm]{Example}
\DeclareMathOperator{\Aut}{Aut}
\DeclareMathOperator{\OAut}{OAut}
\DeclareMathOperator{\Ad}{Ad}
\DeclareMathOperator{\Homeo}{Homeo}
\DeclareMathOperator{\Perm}{Perm}
\DeclareMathOperator{\Ball}{Ball}
\DeclareMathOperator{\id}{id}
\DeclareMathOperator{\Ext}{Ext}
\DeclareMathOperator{\diam}{diam}
\DeclareMathOperator{\im}{im}
\DeclareMathOperator{\Inn}{Inn}
\DeclareMathOperator{\supp}{supp}
\newcommand{\cstar}{\ensuremath{C^{*}}}
\title{MF Actions and K-theoretic Dynamics}
\author{Timothy Rainone}
\begin{document}

\maketitle

\begin{abstract}
We study the interplay of \cstar-dynamics and $K$-theory. Notions of chain recurrence for transformations groups $(X,\Gamma)$ and MF actions for non-commutative \cstar-dynamical systems $(A,\Gamma,\alpha)$ are translated into $K$-theoretical language, where purely algebraic conditions are shown to be necessary and sufficient for a reduced crossed product to admit norm microstates. We are particularly interested in actions of free groups on AF algebras, in which case we prove that a $K$-theoretic coboundary condition determines whether or not the reduced crossed product is a Matricial Field (MF) algebra. One upshot is the equivalence of stable finiteness and being MF for these  reduced crossed product algebras.
\end{abstract}

\section{Introduction}

The theory of operator algebras and dynamical systems are closely interwoven. Crossed product algebras arising from transformation groups, and in general from non-commutative \cstar-dynamical systems,  play an important role in the study of \cstar-algebras. One would like to uncover information about the algebra by studying the dynamics and, conversely, describe the nature of the dynamics by looking at the operator algebra's structure and invariants.

We are particularly interested in finite-dimensional approximation properties of \cstar-algebras. While nuclearity and exactness are measure-theoretic concepts, residual finite dimensionality, quasidiagonality and admitting norm microstates are properties more topological in nature as they concern matricial approximation of both the linear and multiplicative structure of the algebra. In this work we flesh out the appropriate dynamical conditions that give rise to such topological approximations in resulting reduced crossed products, and give $K$-theoretic expression to these conditions when the underlying algebras have sufficiently many projections. One purpose of this paper is to provide a $K$-theoretic interpretation of dynamical approximation properties such as residual finiteness and quasidiagonal actions as introduced by Kerr and Nowak in~\cite{KN}, and by doing so, extend results found in~\cite{B} and ~\cite{Pi}.

In the classical setting, Pimsner described a purely topological dynamical property for a $\mathbb{Z}$-system $(X,\mathbb{Z})$ that renders the resulting crossed product $C(X)\rtimes_{\lambda}\mathbb{Z}$ AF-embeddable. He showed in~\cite{Pi} that for a self-homeomorphism $T$ of a compact metrizable space $X$ the following are equivalent: (1) the crossed product embeds into an AF algebra, (2) the crossed product is quasidiagonal, (3) the crossed product is stably finite, (4) ``$T$ compresses no open sets'', that is, there does not exist a non-empty open set $U\subset X$ for which $T(\overline{U})\subsetneqq U$, which is equivalent to the action being \emph{chain recurrent}, that is, for every $x\in X$ and every $\varepsilon>0$, there are finitely many points $x=x_{1},\dots,x_{n}=x$ such that $d(T(x_{j}),x_{j+1})<\varepsilon$ for $1\leq j\leq n$.

It was N. Brown who saw condition (4) as being essentially $K$-theoretical, at least in the presence of many projections~\cite{B}. When $X$ is zero-dimensional we have $K_{0}(C(X))=C(X;\mathbb{Z})$, and the chain recurrence condition is expressed as $\hat T(f)<f$ for no non-zero $f\in C(X;\mathbb{Z})$, where $\hat T:C(X;\mathbb{Z})\rightarrow C(X;\mathbb{Z})$ is the induced order automorphism given by $\hat T(f)=f\circ T^{-1}$. Brown was then able to generalize Pimsner's result to the non-commutative setting as follows.

\begin{thm}[Brown] Let $A$ be an AF algebra and $\alpha\in\Aut(A)$ an automorphism. Then the following are equivalent:
\begin{enumerate}
\item $A\rtimes_{\alpha}\mathbb{Z}$ is AF-embeddable.
\item $A\rtimes_{\alpha}\mathbb{Z}$ is quasidiagonal.
\item $A\rtimes_{\alpha}\mathbb{Z}$ stably finite.
\item The induced map $\hat\alpha:K_{0}(A)\rightarrow K_{0}(A)$ ``compresses no element'', that is, there is no $x\in K_{0}(A)$ for which $\hat\alpha(x)<x$; equivalently, $H_{\alpha}\cap K_{0}(A)^{+}=\{0\}$, where $H_{\alpha}$ is the coboundary subgroup $\{x-\hat\alpha(x)\ :\ x\in K_{0}(A)\}$.
\end{enumerate}
\end{thm}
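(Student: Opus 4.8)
The plan is to run the cycle $(1)\Rightarrow(2)\Rightarrow(3)\Rightarrow(4)\Rightarrow(1)$, dispatching the first two arrows as general facts about \cstar-algebras and concentrating all the real content in the last two. For $(1)\Rightarrow(2)$ I would use that every AF algebra is quasidiagonal and that quasidiagonality passes to \cstar-subalgebras, so an AF-embeddable algebra is quasidiagonal. For $(2)\Rightarrow(3)$ I would invoke that quasidiagonal \cstar-algebras are stably finite: quasidiagonality is preserved under tensoring with $M_n$, so it suffices to rule out a proper isometry in a unital quasidiagonal algebra, and an asymptotically multiplicative, asymptotically isometric net of completely positive contractions into matrix algebras forces every isometry to be a unitary.

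The heart of the ``only if'' direction is $(3)\Rightarrow(4)$, which I would prove by contraposition. Suppose (4) fails, so there is $x\in K_{0}(A)$ with $\hat\alpha(x)<x$; after adding a large multiple of $[1]$ (which $\hat\alpha$ fixes) I may assume $x\in K_{0}(A)^{+}$, and after stabilizing I may write $x=[p]$ for a genuine projection $p\in M_{n}(A)$. Then $[\alpha(p)]<[p]$ in the dimension group $K_{0}(A)$, so in the AF algebra $M_{n}(A)$ the projection $\alpha(p)$ is Murray--von Neumann equivalent to a proper subprojection of $p$: there is $v\in M_{n}(A)$ with $v^{*}v=\alpha(p)$ and $vv^{*}=p'\lneq p$. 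Writing $u$ for the canonical unitary of the crossed product, normalized so that $uau^{*}=\alpha(a)$, I set $S=u^{*}v^{*}\in M_{n}(A\rtimes_{\alpha}\mathbb{Z})$ and compute
\[
S^{*}S=vuu^{*}v^{*}=vv^{*}=p'\lneq p,\qquad S S^{*}=u^{*}v^{*}vu=u^{*}\alpha(p)u=p .
\]
Thus $p$ is equivalent to its proper subprojection $p'$, i.e.\ $p$ is an infinite projection in $M_{n}(A\rtimes_{\alpha}\mathbb{Z})$, so the crossed product is not stably finite. This establishes $\neg(4)\Rightarrow\neg(3)$, and it is pleasantly self-contained, needing no exact sequence.

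The \emph{main obstacle} is $(4)\Rightarrow(1)$, the construction of an AF embedding. As preparatory analytic data I would extract from (4) a faithful invariant trace: a separation argument in the ordered group $K_{0}(A)$ shows that $H_{\alpha}\cap K_{0}(A)^{+}=\{0\}$ forces, for each nonzero $x\in K_{0}(A)^{+}$, an $\hat\alpha$-invariant state of $(K_{0}(A),K_{0}(A)^{+},[1])$ not vanishing at $x$, and averaging over the countable group $K_{0}(A)$ yields a faithful $\alpha$-invariant tracial state $\tau$ on $A$ whose canonical extension $\tau\circ E$ (with $E$ the faithful conditional expectation onto $A$) is a faithful trace on the crossed product. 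Upgrading this to an honest AF embedding is the genuinely hard part and is where the difficulty concentrates; the route I would follow is Brown's. One writes $A=\varinjlim A_{m}$ with each $A_{m}$ finite dimensional, approximates $\alpha$ on each $A_{m}$ (after a small Berg-type perturbation) by an automorphism that merely permutes and inner-conjugates matrix blocks, and recognizes the resulting finite-stage crossed products as circle or interval algebras, which are AF-embeddable. The obstruction to assembling these building blocks into a single AF inductive limit is $K_{1}$-theoretic --- the winding numbers of the implementing unitaries --- and the Pimsner--Voiculescu exact sequence identifies exactly this obstruction with the kernel and cokernel of $\id-\hat\alpha$; condition (4) is precisely what guarantees the obstruction can be unwound inside an AF target, so that the local models connect by genuine embeddings. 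Controlling these connecting maps uniformly, so that the limit is honestly AF and the resulting map is isometric, is the technical crux.
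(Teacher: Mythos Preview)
The paper does not prove this statement. Brown's theorem is quoted in the Introduction as a known result from~\cite{B} and serves only as motivation for the paper's own extension to free-group actions (Theorem~\ref{Main2}); no proof is supplied. So there is nothing in the paper to compare your argument against.

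That said, your outline is a reasonable sketch of the cycle as it actually runs in Brown's paper, and your $(3)\Rightarrow(4)$ is correct and self-contained. One small point: in your $(4)\Rightarrow(1)$ paragraph, ``averaging over the countable group $K_{0}(A)$'' is not what you mean---you would average over the $\mathbb{Z}$-action (or use a separation/compactness argument for states on the dimension group), and even then producing a \emph{single} faithful invariant state requires care rather than a direct averaging. This is cosmetic, since you correctly flag that the genuine work in $(4)\Rightarrow(1)$ is the inductive construction of the AF target and the control of connecting maps, which you do not attempt to carry out. If you want a proof internal to this paper, note that specializing Theorem~\ref{Main2} to $r=1$ (together with the fact that $\mathbb{Z}$ is amenable, so nuclear MF implies QD) recovers the equivalence $(2)\Leftrightarrow(3)\Leftrightarrow(4)$, but the AF-embeddability statement $(4)\Rightarrow(1)$ is not proved anywhere in this paper and must be taken from~\cite{B}.
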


One of the main results of this paper, Theorem~\ref{Main2}, extends Brown's result to the case of a free group on $r$ generators acting on a unital AF algebra. In this case the coboundary subgroup is given by $H_{\sigma}=\im(\sigma)$ where $\sigma:\oplus_{j=1}^{r}K_{0}(A)\rightarrow K_{0}(A)$ is the coboundary morphism in the Pimsner-Voiculescu six-term exact sequence. In abbreviated form our Theorem says the following.

\begin{thm} Let $A$ be a unital AF algebra and $\alpha:\mathbb{F}_{r}\rightarrow\Aut(A)$ an action of the free group on $r$ generators. Then the following are equivalent.
\begin{enumerate}
\item $H_{\sigma}\cap K_{0}(A)^{+}=\{0\}.$
\item The reduced crossed product $A\rtimes_{\lambda,\alpha}\mathbb{F}_{r}$ is MF.
\item The reduced crossed product $A\rtimes_{\lambda,\alpha}\mathbb{F}_{r}$ is stably finite.
\end{enumerate}

\end{thm}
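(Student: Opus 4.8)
The plan is to prove the cycle of implications $(1)\Rightarrow(2)\Rightarrow(3)\Rightarrow(1)$, with essentially all the work concentrated in the first implication. The implication $(2)\Rightarrow(3)$ is immediate from general theory: an MF algebra embeds into the stably finite algebra $\prod_n M_{k_n}/\bigoplus_n M_{k_n}$, and a subalgebra of a stably finite algebra is again stably finite, so $A\rtimes_{\lambda,\alpha}\mathbb{F}_r$ is stably finite. This step requires no dynamical input.

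For $(3)\Rightarrow(1)$ I would argue contrapositively. Suppose $x\in H_\sigma\cap K_0(A)^+$ is nonzero. Since $A$ is AF, $K_0(A)^+$ is exactly the set of classes of projections, so $x=[p]$ for a nonzero projection $p\in M_\infty(A)$. By the Pimsner--Voiculescu six-term sequence for $\mathbb{F}_r$, the image of the coboundary map $\sigma$ coincides with the kernel of the map $\iota_*:K_0(A)\to K_0(A\rtimes_{\lambda,\alpha}\mathbb{F}_r)$ induced by the inclusion; hence $H_\sigma=\im(\sigma)=\ker(\iota_*)$ and $\iota_*[p]=0$. Thus $[p]=0$ in $K_0(A\rtimes_{\lambda,\alpha}\mathbb{F}_r)$, so there is a projection $q$ in a matrix algebra over the crossed product with $p\oplus q\sim q$ (Murray--von Neumann equivalence). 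As $p\neq 0$, the inclusion $A\hookrightarrow A\rtimes_{\lambda,\alpha}\mathbb{F}_r$ being faithful, the projection $p\oplus q$ is Murray--von Neumann equivalent to its proper subprojection $0\oplus q$ and is therefore infinite, contradicting stable finiteness. Hence $H_\sigma\cap K_0(A)^+=\{0\}$.

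The substance of the theorem is $(1)\Rightarrow(2)$. Here I would invoke the paper's characterization of MF reduced crossed products in terms of covariant norm microstates: to show $A\rtimes_{\lambda,\alpha}\mathbb{F}_r$ is MF it suffices to produce, for each finite $F\subset A$ and each $\varepsilon>0$, a matrix algebra $M_k$, a $*$-linear $\varepsilon$-multiplicative and $\varepsilon$-isometric map $\pi:F\to M_k$, and unitaries $V_1,\dots,V_r\in M_k$ with $\|V_j\pi(a)V_j^*-\pi(\alpha_j(a))\|<\varepsilon$ for all $a$ in a prescribed finite set, the family $V_1,\dots,V_r$ being asymptotically free in the sense needed to recover the reduced norm. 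After replacing $F$ by a finite-dimensional subalgebra far out in the AF filtration, the existence of a genuine representation $\pi$ together with unitaries $V_j$ approximately intertwining $\pi$ and $\pi\circ\alpha_j$ becomes a matching problem on the Bratteli diagram: the multiplicities of $\pi$ must be chosen so that the classes $[\pi(e)]$ and $[\pi(\alpha_j(e))]$ of minimal projections agree in $K_0(M_k)=\mathbb{Z}$. It is exactly the hypothesis $H_\sigma\cap K_0(A)^+=\{0\}$, read through the Pimsner--Voiculescu coboundary, that guarantees a compatible choice of multiplicities exists: the vanishing of positive coboundaries ensures that no projection class is forced to strictly decrease under the combined action of the generators, so that a dimension-balancing (a Hall-type marriage, or flow, argument on the diagram) can be carried out and the $\alpha_j$ implemented by honest unitaries rather than proper isometries.

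The main obstacle, and the feature genuinely new beyond Brown's $\mathbb{Z}$-theorem, is reconciling two competing demands on the $V_j$: they must approximately implement the automorphisms $\alpha_j$, a rigid $K$-theoretic constraint, yet they must also be asymptotically free so that the finite-dimensional models capture the \emph{reduced}, rather than the full, crossed-product norm. For $\mathbb{Z}$ one needs only a single approximately covariant unitary and there is no norm discrepancy; for $\mathbb{F}_r$ the reduced norm forces a Voiculescu-type random-matrix construction in the spirit of Haagerup--Thorbj\o rnsen, and the dimension-balancing coming from condition $(1)$ must be arranged compatibly with such asymptotically free unitaries. I expect the bulk of the technical effort to lie in performing the Bratteli-diagram matching uniformly enough that it survives amplification and conjugation into an asymptotically free family without destroying approximate covariance, and in verifying that the resulting microstates are asymptotically isometric for the reduced norm.
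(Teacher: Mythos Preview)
Your arguments for $(2)\Rightarrow(3)$ and $(3)\Rightarrow(1)$ are correct and match the paper's. The difficulty, as you recognise, is $(1)\Rightarrow(2)$, and here your plan diverges from the paper in a way that makes the problem look harder than it is.

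You propose to build, in one stroke, matrix models carrying unitaries $V_1,\dots,V_r$ that are simultaneously \emph{approximately covariant} for $\alpha$ and \emph{asymptotically free} so as to recover the reduced norm, and you correctly anticipate that reconciling these two demands is delicate. The paper's insight is that these two requirements can be completely decoupled. First, one shows that condition $(1)$ forces the action to be quasidiagonal in the sense of Definition~\ref{defnMFaction}: approximately multiplicative, approximately isometric u.c.p.\ maps $\varphi_n:A\to\mathbb{M}_{k_n}$ together with unitaries implementing $\alpha$ approximately. This is the ``dimension-balancing'' step, and the paper carries it out not by a marriage or flow argument but by invoking Spielberg's lemma (which rests on the Effros--Handelman--Shen theorem): since $H_\sigma\cap K_0(A)^+=\{0\}$, there is a positive homomorphism $\theta:K_0(A)\to G$ into a dimension group with $H_\sigma\subset\ker\theta$ and $\ker\theta\cap K_0(A)^+=\{0\}$. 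From this one extracts, for any finite configuration, an integer-valued locally $\Gamma$-invariant ``state'' on $K_0(A)$, which is exactly the multiplicity data needed to define $\varphi_n$ and the covariant unitaries.

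Once the action is QD, the sequence $(\varphi_n)$ assembles into an equivariant embedding of $(A,\alpha)$ into $(M,\beta)$ with $M=\prod\mathbb{M}_{k_n}/\bigoplus\mathbb{M}_{k_n}$ and $\beta$ \emph{inner}. But an inner action has $M\rtimes_{\lambda,\beta}\mathbb{F}_r\cong M\otimes_{\min}C^*_\lambda(\mathbb{F}_r)$, so
\[
A\rtimes_{\lambda,\alpha}\mathbb{F}_r\hookrightarrow M\otimes_{\min}C^*_\lambda(\mathbb{F}_r),
\]
and this tensor product is MF because $C^*_\lambda(\mathbb{F}_r)$ is MF by Haagerup--Thorbj{\o}rnsen. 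No asymptotically free unitaries need to be built by hand, and no compatibility between freeness and covariance needs to be engineered: HT is used purely as a black box on the second tensor factor. Your proposed route is not demonstrably wrong, but the ``main obstacle'' you identify dissolves once one passes through the inner-action/tensor-product trick, and your sketch of the dimension-balancing step is too vague to stand on its own without something like Spielberg's lemma.
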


In order to extend the results of Pimsner and Brown to actions of discrete countable groups, one needs the right notion of chain recurrence for arbitrary transformation groups and a corresponding approximation property for \cstar-dynamical systems. D. Kerr and P. Nowak then introduced residually finite actions and quasidiagonal actions in~\cite{KN} where it was shown that these systems give rise to MF crossed products provided that the reduced group \cstar-algebra of the acting group is itself MF. This is a necessary condition as being MF passes to subalgebras and the reduced group \cstar-algebras sits canonically inside the reduced crossed product. Thus one cannot hope for quasidiagonality, or much less AF-embeddability, when the acting group is non-amenable, for Rosenberg's result (\cite{HadR}) asserts that a discrete group whose reduced \cstar-algebra is quasidiagonal must be amenable.

Matricial field (MF) algebras were introduced in by Blackadar and Kirchberg in~\cite{BK}. These are stably finite \cstar- algebras which arise from generalized inductive limits of finite-dimensional algebras, or, equivalently, which admit norm microstates~\cite{BO}. The MF property is the \cstar-analogue of admitting tracial microstates, i.e., embeddability into the ultrapower $R^\omega$ of the hyperfinite II$_{1}$ factor. Blackadar and Kirchberg remarked that there is no example of a stably finite separable \cstar-algebra which is known not to be MF, but that a good candidate is $\cstar_{\lambda}(\mathbb{F}_{r})$. Then came the deep result of U. Haagerup and S. Thorbj{\o}rnsen in~\cite{HT} that showed that $\cstar_{\lambda}(\mathbb{F}_{r})$ is in fact MF. It therefore seems natural to focus our attention on actions of free groups on AF algebras. By studying the induced $K$-theoretic dynamics of such systems, purely algebraic conditions emerge which are necessary and sufficient for $A\rtimes_{\lambda,\alpha}\mathbb{F}_{r}$ to be MF, one in the form of locally invariant states on $K_{0}(A)$ and the other in the spirit of a coboundary subgroup as in Brown's work (Theorem 0.2 of~\cite{B}). These are summed up in Theorem~\ref{Main2} below. These $K$-theoretic conditions enable us to prove that being MF and being stably finite are equivalent for this class of crossed products.

MF algebras are interesting in their own right but are also important in Voiculescu's seminal study of topological free entropy dimension for a family of self-adjoint elements $a_{1},\dots,a_{n}$ in a unital \cstar-algebra $A$~\cite{V1}. Indeed the latter is well-defined only when $\cstar(\{a_{1},\dots,a_{n}\})$ is MF. There is also a connection between MF algebras and the Brown-Douglas-Filmore \emph{Ext} semigroup introduced in~\cite{BDF}. This note will exhibit several examples of MF algebras whose \emph{Ext} semigroup is not a group.

We briefly outline the flow and content of the paper. Section 2 introduces notation, relevant constructions and necessary concepts that will appear thereafter. In section 3 we explore MF, QD, and RFD actions in the commutative and non-commutative settings along with their crossed product structure. Loosely speaking, an MF action on a algebra is one that can be approximately modeled matricially. When the underlying algebra is nuclear, we show how the approximating dynamics completely characterize RFD, QD and MF reduced crossed products (see Theorem~\ref{Summarypart1}). For example, it is shown that given any action $\mathbb{F}_{r}\curvearrowright A$ on a UHF algebra, the reduced crossed product $A\rtimes_{\lambda}\mathbb{F}_{r}$ is always MF and $\Ext(A\rtimes_{\lambda}\mathbb{F}_{r})$ is not a group. Section 4 deals primarily with $K$-theoretic dynamics as we study residually finite, RFD and MF actions in their $K$-theoretic expressions. These systems will admit, in a local sense, invariant states on the $K_{0}$ group of the underlying algebra. This enables us to prove the main result, Theorem~\ref{Main2}, which determines exactly when the reduced crossed product of free group on a unital AF-algebra is MF.

The author would like to express his gratitude to his adviser David Kerr for his support and many suggestions.

\section{First principles and notation}

Throughout this paper $\Gamma$ will always denote a countable discrete group and $(X,d)$ a compact metrizable space and all $\cstar$-algebras $A$ will be considered separable and with unit $1_{A}$. We write $\mathcal{P}(A)$ for the set of projections in $A$ and we set $\mathcal{P}_{\infty}(A)=\bigcup_{n\geq1}\mathcal{P}(M_{n}(A))$. We will frequently encounter the free group $\mathbb{F}_{r}$ on $r$ generators. Recall that a group action is simply a group homomorphism $h:\Gamma\rightarrow\Perm(E)$ from a group $\Gamma$ to the group of permutations on an arbitrary set $E$. At times, for economy, we write $\Gamma\curvearrowright E$ to denote the action and $h_{s}(x)=s.x$ for $s\in\Gamma$ and $x\in X$. When $E$ has additional structure, e.g. when $E=X$ is a topological space, $E=A$ a \cstar-algebra or $E=(G,G^{+},u)$ an ordered abelian group, one imposes extra conditions on the action so that it respects the prescribed category.  More precisely, by a \emph{continuous action} $\Gamma\curvearrowright X$, or equivalently a \emph{transformation group} $(X,\Gamma)$, we mean a group homomorphism $h:\Gamma\rightarrow\Homeo(X)$ where $\Homeo(X)$ denotes the group of homeomorphisms of $X$. In an operator algebraic framework one speaks of a \emph{\cstar-dynamical system} $(A,\Gamma,\alpha)$, where $A$ is a \cstar-algebra, $\Gamma$ a topological group and $\alpha :\Gamma \rightarrow\Aut(A)$ a continuous group homomorphism into $\Aut(A)$; the topological group of automorphisms of $A$ with the point-norm topology. Again we emphasize that since $\Gamma$ is discrete, we need not worry about the continuity of $\alpha$. In the case where $A$ is a commutative algebra, say $A=C(X)$ for some compact Hausdorff space $X$, \cstar-systems $(C(X),\Gamma,\alpha)$ are in one-to-one correspondence with transformation groups $(X,\Gamma)$ via the formula $\alpha_{s}(f)(x)=f(s^{-1}.x)$ where $s\in\Gamma, f\in C(X), x\in X$. Given a $C^{*}$-dynamical system $(A,\Gamma,\alpha)$, we write $A\rtimes_{\alpha}\Gamma$ to denote the full crossed product \cstar-algebra whereas $A\rtimes_{\lambda,\alpha}\Gamma$ will stand for the reduced algebra. Recall that if $\Gamma$ is amenable then we have $A\rtimes_{\alpha}\Gamma=A\rtimes_{\lambda,\alpha}\Gamma$. Furthermore, if $\Gamma$ is amenable and $A$ is nuclear then $A\rtimes_{\lambda,\alpha}\Gamma$ is nuclear as well. We refer the reader to~\cite{BO}, \cite{Wi} and~\cite{Ph} for details concerning the construction and properties of crossed products.

A \cstar-dynamical system induces a natural action at the $K$-theoretical level, and the order theoretical dynamics will reflect information about the nature of the action and will often describe the structure of the crossed product. Recall that a unital stably finite \cstar-algebra $A$ yields an ordered abelian group $(K_{0}(A),K_{0}(A)^{+})$ with order unit $[1_{A}]$. If $(G, G^{+},u)$ and $(H,H^{+},v)$ are ordered abelian groups each with their distinguished order units, a morphism in this category is a group homomorphism $\beta:G\rightarrow H$ which is positive and order unit preserving, i.e. $\beta(G^{+})\subset H^{+}$, and $\beta(u)=v$ respectively. We also write
\[\OAut(G):=\{\tau\in\mbox{Aut}(G): \tau(G^{+})=G^{+}, \tau(u)=u\}\]
for the set of ordered abelian group automorphisms. When the group is $\mathbb{Z}^{d}$, we employ the standard ordering defined by the positive cone $(\mathbb{Z}^{d})^{+}:=(\mathbb{Z}_{\geq0})^{d}$, and whose order unit is $(1,1,\dots,1)$. Recall that $(K_{0}(\mathbb{M}_{d}),K_{0}(\mathbb{M}_{d})^{+},[1])\cong (\mathbb{Z},\mathbb{Z}^{+},d)$, and if $X$ is a zero-dimensional compact metric space, $K_{0}(C(X))\cong C(X;\mathbb{Z})$ with natural point-wise ordering. The $K_{0}$-functor is covariant, namely, if $\phi:A\rightarrow B$ is a $\ast$-homomorphism ($\ast$-automorphism), one obtains a positive group homomorphism (ordered group automorphism) $K_{0}(\phi):K_{0}(A)\rightarrow K_{0}(B)$ defined by $K_{0}(\phi)([p])=[\phi(p)]$ where $p$ is a projection living in $\mathbb{M}_{n}(A)$ for some $n$. For economy we sometimes write $\hat\phi=K_{0}(\phi)$. Note that for every action $\alpha:\Gamma \rightarrow\Aut(A)$, there is an associated action $\hat\alpha:\Gamma\rightarrow\OAut(K_{0}(A))$ where $\hat\alpha(s)= \hat\alpha_{s}:K_{0}(A)\rightarrow K_{0}(A)$ is the induced automorphism. Again, in the case of stable finiteness, the positive cone $K_{0}(A)^{+}$ is a partially ordered monoid, whose ordering is inherited from $K_{0}(A)^{+}$ and coincides with the algebraic ordering. Restricting $\hat\alpha$ to $K_{0}(A)^{+}$ also gives an action of order isomorphisms.

In 1997 Blackadar and Kirchberg introduced in~\cite{BK} the so called \emph{Matricial Field (MF)} algebras. A separable $C^{*}$-algebra $A$ is said to be MF if it can be expressed as a generalized inductive system of finite-dimensional algebras, or equivalently, if there is a natural sequence $\textbf{n}=(n_{k})_{k\geq1}$ and a $*$-monomorphism
\[\iota:A\hookrightarrow Q_{\textbf{n}}:=\prod_{k=1}^{\infty}\mathbb{M}_{n_{k}}\bigg/\bigoplus_{k=1}^{\infty}\mathbb{M}_{n_{k}}.\]
Denote by $\pi:\prod_{k=1}^{\infty}\mathbb{M}_{n_{k}}\rightarrow Q_{\textbf{n}}$ the canonical quotient mapping. If such an embedding $\iota$ exists along with a u.c.p.\ lift, that is, a unital completely positive map $\Phi:A\rightarrow\prod_{k=1}^{\infty}\mathbb{M}_{n_{k}}$ such that $\pi\circ\Phi=\iota$, $A$ is said to be \emph{quasidiagonal}. A good treatise on QD algebras can be found in~\cite{B2}. It is readily seen that an algebra $A$ is MF (QD) if it satisfies the following local property: for every $\varepsilon>0$ and finite set $\Omega\subset A$, there is a $k$ and $*$-linear (u.c.p.) map $\psi:A\rightarrow\mathbb{M}_{k}$ such that
\begin{align*}\|\psi(ab)-\psi(a)\psi(b)\|&<\varepsilon\qquad\forall a,b\in\Omega,\\
\big|\|\psi(a)\|-\|a\|\big|&<\varepsilon\qquad\forall a\in\Omega.
\end{align*}
Recall that a separable algebra $A$ is said to be \emph{residually finite dimensional} (RFD) if there is a sequence of $\ast$-homomorphisms $\psi_{n}:A\rightarrow\mathbb{M}_{k_{n}}$ with $\|\psi_{n}(a)\|\nearrow\|a\|$ for all $a\in A$.
Clearly being MF, QD or RFD passes to \cstar-subalgebras, RFD algebras are QD, and QD algebras are MF. Moreover, MF algebras are stably finite. To see this, suppose $(a_{k})_{k\geq1}\in\prod_{k=1}^{\infty}\mathbb{M}_{n_{k}}$ is a sequence with
\[1_{Q_{\textbf{n}}}=\pi((a_{k})_{k})^{*}\pi((a_{k})_{k})=\pi((a_{k}^{*}a_{k})_{k}),\]
it follows then that $\|a_{k}^{*}a_{k}-1_{\mathbb{M}_{n_{k}}}\|\rightarrow 0$. A little spectral theory shows that $\|a_{k}a_{k}^{*}-1_{\mathbb{M}_{n_{k}}}\|\rightarrow 0$, so that $\pi((a_{k}))\pi((a_{k}))^{*}=1_{Q_{\textbf{n}}}$ thus $Q_{\textbf{n}}$ is finite and hence $A$, being isomorphic to a unital subalgebra of $Q_{\textbf{n}}$, is also finite. Since $M_{n}(A)$ is also MF, $A$ is stably finite. It is still unknown whether or not stably finite algebras are MF, or if there is a countable discrete group $\Gamma$ for which $C^{*}_{\lambda}(\Gamma)$ fails to be MF.

\section{Residually finite and MF actions}

\begin{defn}Let $(X,d)$ be a compact metric space and $\Gamma$ a discrete group. A continuous action $h:\Gamma\rightarrow\Homeo(X)$ is said to be \emph{residually finite} (RF) if for every $\varepsilon>0 $ and finite set $F\subset\Gamma $, there exists a finite set $E$ which admits an action $k:\Gamma\rightarrow\Perm(E)$ and a map $\zeta:E\rightarrow X $ such that
\begin{enumerate}
\item $d(\zeta(k_{s}(z)),h_{s}(\zeta(z)))<\varepsilon$ for each $s\in F$ and $z\in E$,
\item $X\subset_{\varepsilon}\zeta(E)$, that is $\zeta$ has $\varepsilon$-dense range in $X$.
\end{enumerate}
\end{defn}

This notion of a residually finite action was introduced in~\cite{KN}, from which we mention a few results. It is easily verified that if a group admits a \emph{free} residually finite action on some compact space then the group itself must be residually finite, hence the name. Moreover, a residually finite action $\Gamma\curvearrowright X$ will yield a $\Gamma$-invariant probability measure on $X$ which extends in a canonical way (by composition with the conditional expectation) to a trace on $C(X)\rtimes_{\lambda}\Gamma$. Thus residually finite transformation groups $(X,\Gamma)$ produce stably finite reduced crossed products. Theorem~\ref{Main2} below and Lemma 3.9 in~\cite{KN} together show that the converse holds true when $\dim(X)=0$ and $\Gamma=\mathbb{F}_{r}$. It is also shown that when dealing with $\mathbb{Z}$-systems $(X,\mathbb{Z})$, residual finiteness is equivalent to chain recurrence~\cite{C}.

We introduce here a stronger notion than residual finiteness; one that demands exact and global equivariance.

\begin{defn} Let $(X,d)$ be a compact metric space and $\Gamma$ a discrete group. A continuous action $h:\Gamma\rightarrow\Homeo(X)$ is said to be \emph{residually finite dimensional} (RFD) if for every $\varepsilon>0$ there exists a finite set $E$ which admits an action $k:\Gamma\rightarrow\Perm(E)$ and a map $\zeta:E\rightarrow X $ such that
\begin{enumerate}
\item $\zeta(k_{s}(z))=h_{s}(\zeta(z))$ for every $z\in E$ and $s\in\Gamma$.
\item $X\subset_{\varepsilon}\zeta(E)$, that is, $\zeta$ has $\varepsilon$-dense range in $X$.
\end{enumerate}
\end{defn}

In other words, a transformation group $(X,\Gamma)$ is RFD if for every $\varepsilon>0$ there is finite $\Gamma$-invariant subsystem which is $\varepsilon$-dense. Clearly every RFD action is RF, but the converse is false in general; minimal Cantor systems $\mathbb{Z}\curvearrowright X$ yield infinite-dimensional, simple, stably-finite crossed products $C(X)\rtimes\mathbb{Z}$ (\cite{Pu}). As remarked above such systems are residually finite but cannot be residually finite dimensional by Theorem~\ref{KerrNowak} below. The nomenclature is justified by Theorem~\ref{KerrNowak} and Proposition~\ref{RFDactions}.

As observed by the authors of~\cite{KN}, residually finite actions $\Gamma\curvearrowright X$  have \cstar-dynamical expressions when looking at the induced action on the algebra $C(X)$ (see Proposition~\ref{RFactionimpliesQDaction} below). Indeed, what is witnessed at the algebraic level is a finite dimensional approximating property familiar to \cstar-enthusiasts along with an approximate equivariance. We make similar observations when studying RFD actions (see Proposition~\ref{RFDactions}). Here are the appropriate definitions at the \cstar-level.

\begin{defn}\label{defnMFaction} Let $\Gamma$ be a discrete group and $A$ a \cstar-algebra.
\begin{enumerate}
\item An action $\alpha:\Gamma\rightarrow\Aut(A)$ is said to be \emph{matricial field} (MF) provided that: given $\varepsilon > 0$, and finite subsets $ \Omega \subset A$ and $ F \subset \Gamma $, there exist $d\in  \mathbb{N}$, a map $v:\Gamma\rightarrow\mathcal{U}(\mathbb{M}_{d})$ ($s\mapsto v_{s}$), and a unital $*$-linear map $\varphi:A \rightarrow\mathbb{M}_{d}$, such that for every $a,b\in\Omega$ and $s,t\in F$
\begin{enumerate}
\item $\|\varphi(ab)-\varphi(a)\varphi(b)\|<\varepsilon$,
\item $\big|\|\varphi (a)\|-\|a\|\big|<\varepsilon$,
\item $\|\varphi(\alpha_{s}(a))-\Ad_{v_{s}}(\varphi(a))\|< \varepsilon$,
\item $\|v_{st}-v_{s}v_{t}\|<\varepsilon$.
\end{enumerate}
If the unital map $\varphi$ can be further chosen to be completely positive, $\alpha$ is said to be \emph{quasidiagonal} (QD).

\item The action $\alpha:\Gamma\rightarrow\Aut(A)$ is said to be \emph{residually finite dimensional} (RFD) if for every $\varepsilon>0$ and finite subset $\Omega\subset A$, there is a $d\in\mathbb{N}$, a $\ast$-homomorphism $\pi:A\rightarrow\mathbb{M}_{d}$ and a unitary representation $v:\Gamma\rightarrow\mathcal{U}(\mathbb{M}_{d})$ such that
\begin{enumerate}
\item $\|\pi(b)\|>\|b\|-\varepsilon$ for every $b\in\Omega$,
\item $\pi(\alpha_{s}(a))=\Ad_{v(s)}(\pi(a))$ for every $a\in A$ and $s\in\Gamma$.
\end{enumerate}
\end{enumerate}
\end{defn}

A few remarks and key observations concerning Definition~\ref{defnMFaction} are in order. Every RFD system $(A,\Gamma)$ is clearly QD, and every QD system is MF. We show that MF actions are in fact QD when the underlying algebra is amenable (see Proposition~\ref{QDactionequalsMFaction}). It is obvious that if $\alpha:\Gamma\curvearrowright A$ is RFD (QD, MF), then $A$ is itself RFD (QD, MF). Note that any finite dimensional algebra can be embedded into a full matrix algebra, so we may replace $\mathbb{M}_{d}$ by any finite dimensional algebra $B$ without changing the notion. Also, when verifying that an action is MF or QD, it suffices to consider finite subsets of a generating set of the acting group $\Gamma$.

The properties of being MF, QD, or RFD pass to subalgebras, so if a \cstar-system $(A,\Gamma, \alpha)$ yields an MF (QD, RFD) crossed product, one expects the underlying algebra $A$ as well as the group algebra $C^{*}_{\lambda}(\Gamma)$ to be MF (QD, RFD). In these cases one can also decipher information about the action $\alpha$. Indeed, the structure of the reduced crossed product algebra determines the nature of the action.

\begin{prop}\label{MF cross implies MF act} Let $A$ be a unital  \cstar-algebra, and $\alpha:\Gamma\rightarrow\Aut(A)$ a homomorphism. The following hold.
\begin{enumerate}
\item If $A\rtimes_{\lambda,\alpha}\Gamma$ is RFD, then $\cstar_{\lambda}(\Gamma)$ is RFD and the action $\alpha$ is residually finite dimensional.
\item If $A\rtimes_{\lambda,\alpha}\Gamma$ is QD, then $\cstar_{\lambda}(\Gamma)$ is QD and the action $\alpha$ is quasidiagonal.
\item If $A\rtimes_{\lambda,\alpha}\Gamma$ is MF, then $\cstar_{\lambda}(\Gamma)$ is MF and the action $\alpha$ is matricial field. Moreover, if $A$ is nuclear, $\alpha$ is quasidiagonal.
\end{enumerate}
\end{prop}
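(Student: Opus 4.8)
The two assertions in each item are of rather different character, so the plan is to separate them. For the first assertion of each item I would invoke the fact, recorded above, that being RFD, QD, or MF passes to \cstar-subalgebras: the reduced crossed product contains canonical copies of both $A$ (via $a\mapsto a$) and $\cstar_{\lambda}(\Gamma)$ (generated by the implementing unitaries $\{u_{s}\}_{s\in\Gamma}$, where $u_{s}au_{s}^{*}=\alpha_{s}(a)$), so each inherits whichever of the three properties $A\rtimes_{\lambda,\alpha}\Gamma$ enjoys. This is immediate and disposes of the statements about $A$ and $\cstar_{\lambda}(\Gamma)$. The real content is to extract the corresponding property of the \emph{action} $\alpha$, and here the unifying idea is to take an approximating map $\psi$ defined on the crossed product, set $\varphi:=\psi|_{A}$ as the required map on $A$, and read off the unitaries $v_{s}$ from the images $\psi(u_{s})$ of the implementing unitaries.

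I would treat the RFD case (1) first, as it is exact and cleanest. Given $\varepsilon>0$ and finite $\Omega\subset A$, choose from the RFD structure of the crossed product a $\ast$-homomorphism $\pi:A\rtimes_{\lambda,\alpha}\Gamma\to\mathbb{M}_{d}$ with $\|\pi(b)\|>\|b\|-\varepsilon$ for all $b\in\Omega$ (possible since the defining sequence satisfies $\|\pi_{n}(b)\|\nearrow\|b\|$ and $\Omega$ is finite). Restrict to obtain the $\ast$-homomorphism $\pi|_{A}:A\to\mathbb{M}_{d}$ and set $v(s):=\pi(u_{s})$, which is genuinely unitary because $\pi$ is multiplicative. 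Then condition (i) of the RFD action holds by the norm estimate on $\Omega$, and condition (ii) holds \emph{exactly}: $\pi(\alpha_{s}(a))=\pi(u_{s}au_{s}^{*})=\Ad_{v(s)}(\pi(a))$ for all $a\in A$, $s\in\Gamma$. No perturbation is needed here.

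For the MF case (3)---and simultaneously the QD case (2)---I would apply the local MF (resp.\ QD) characterization of $A\rtimes_{\lambda,\alpha}\Gamma$ to a sufficiently large finite set and small tolerance $\delta$, producing a unital $\ast$-linear (resp.\ u.c.p.) map $\psi:A\rtimes_{\lambda,\alpha}\Gamma\to\mathbb{M}_{d}$ that is $\delta$-multiplicative and $\delta$-isometric on it. The finite set should contain $\Omega$, the unitaries $\{u_{s},u_{s}^{*}:s\in F\}$, and every product entering conditions (a)--(d), namely $u_{s}a$, the elements $u_{s}au_{s}^{*}=\alpha_{s}(a)$, and $u_{s}u_{t}=u_{st}$ for $a\in\Omega$ and $s,t\in F$. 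Put $\varphi:=\psi|_{A}$; then (a) and (b) are immediate from $\delta$-multiplicativity and $\delta$-isometry. The one genuine manoeuvre is the construction of the $v_{s}$: since $\psi$ is unital and $\delta$-multiplicative, $\psi(u_{s})^{*}\psi(u_{s})\approx\psi(u_{s}^{*}u_{s})=\psi(1)=1$ and likewise $\psi(u_{s})\psi(u_{s})^{*}\approx1$, so $\psi(u_{s})$ is approximately unitary; taking the unitary part of its polar decomposition yields $v_{s}\in\mathcal{U}(\mathbb{M}_{d})$ with $\|v_{s}-\psi(u_{s})\|$ controlled by $\delta$ (and I would extend $v$ arbitrarily to the remaining group elements, since only finitely many are constrained). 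Conditions (c) and (d) then follow by chaining $\delta$-multiplicativity with the estimate $v_{s}\approx\psi(u_{s})$: for (c), $\varphi(\alpha_{s}(a))=\psi(u_{s}au_{s}^{*})\approx\psi(u_{s})\psi(a)\psi(u_{s})^{*}\approx\Ad_{v_{s}}(\varphi(a))$, and for (d), $v_{st}\approx\psi(u_{st})=\psi(u_{s}u_{t})\approx\psi(u_{s})\psi(u_{t})\approx v_{s}v_{t}$. In the QD case $\varphi=\psi|_{A}$ is automatically u.c.p.

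The main obstacle is thus not conceptual but a matter of bookkeeping: one must fix $\delta$ small enough at the outset so that, after accumulating the several approximation errors in (c) and (d) together with the polar-decomposition perturbation, every bound comes out below $\varepsilon$, and one must choose the finite set on which $\psi$ is tested to cover all of the products listed above. Finally, the closing assertion of (3)---that $\alpha$ is QD when $A$ is nuclear---is not proved directly but follows by feeding the MF conclusion just obtained into Proposition~\ref{QDactionequalsMFaction}, which upgrades an MF action on an amenable (equivalently nuclear) algebra to a QD action.
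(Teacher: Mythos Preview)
Your proposal is correct and follows essentially the same route as the paper: restrict an approximating map on the crossed product to $A$, read off approximate unitaries from $\psi(u_{s})$, perturb them to genuine unitaries, and chain the estimates. One small slip in the write-up: you say you construct $v_{s}$ for $s\in F$ and then ``extend $v$ arbitrarily to the remaining group elements,'' but your verification of (d) uses $v_{st}\approx\psi(u_{st})$, so the polar-decomposition perturbation must also be applied to $\psi(u_{st})$ for $s,t\in F$ (and you correctly included $u_{st}$ in the test set for this reason); the arbitrary extension should begin only outside $F\cup\{st:s,t\in F\}$, exactly as the paper does.
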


\begin{proof} Suppose $A\rtimes_{\lambda,\alpha}\Gamma$ is residually finite dimensional. Again, being RFD passes to subalgebras, so $\cstar_{\lambda}(\Gamma)$ is RFD as it sits canonically inside the reduced crossed product. If $\varepsilon>0$ and if $\Omega\subset A$ is a finite set, then there is a $d$ and a $\ast$-homomorphism $\phi:A\rtimes_{\lambda,\alpha}\Gamma\rightarrow\mathbb{M}_{d}$ such that $\|\phi(\iota(b))\|>\|\iota(b)\|-\varepsilon=\|b\|-\varepsilon$ for every $b\in\Omega$, where $\iota:A\hookrightarrow A\rtimes_{\lambda,\alpha}\Gamma$ denotes the natural inclusion. Set $\pi=\phi\circ\iota:A\rightarrow\mathbb{M}_{d}$. Now define a unitary representation $v:\Gamma\rightarrow\mathcal{U}(d)$ as $v(s)=\phi(1_{A}u_{s})$, where $u_{s}$ denote the canonical unitaries in the crossed product implementing the action. Set $\gamma_{s}=\Ad_{v(s)}$ so that $\gamma:\Gamma\curvearrowright\mathbb{M}_{d}$ is an action. We verify
\begin{align*}\pi(\alpha_{s}(a))&=\phi(\iota(\alpha_{s}(a)))=\phi(\alpha_{s}(a)u_{e})=\phi(u_{s}au_{e}u_{s}^{*})
=\phi(u_{s})\phi(\iota(a))\phi(u_{s})^{*}\\&=v(s)\pi(a)v(s)^*=\gamma_{s}(\pi(a))
\end{align*}
and this completes the proof of (1).

We prove (3) next. Let $\varepsilon>0$, and let $F\subset\Gamma$, $\mathcal{F}\subset A$ be finite subsets. For $s\in \Gamma$ denote by $u_{s}$ the canonical unitaries in $A\rtimes_{\lambda,\alpha}\Gamma$ and for economy write $B=A\rtimes_{\lambda,\alpha}\Gamma$. Now set
\[\Omega=\{u_{s}, u_{s}^{*}, u_{st}, u_{s}a, a\ :\ s,t\in F,\ a\in\mathcal{F}\}\subset B.\]
Next, set $K=\max_{x\in\Omega}\|x\|+1$. By an elementary perturbation result, there is a $0<\delta<\min\{1,\varepsilon/4\}$ with the following property: if $D$ is \emph{any} \cstar-algebra with $d\in D$ satisfying $\|d^{*}d-1\|<\delta$ and $\|dd^{*}-1\|<\delta$, there is a $v\in\mathcal{U}(D)$ with $\|d-v\|<\varepsilon/4K$. Assuming $B$ is MF, there is a unital $\ast$-linear map $\psi:B\rightarrow \mathbb{M}_{d}$ such that
\begin{align}
\label{AMpsi}\|\psi(xy)-\psi(x)\psi(y)\|&<\delta \qquad \forall x,y,\in\Omega,\\
\label{AIpsi}|\|\psi(x)\|-\|x\||&<\delta \qquad \forall x\in \Omega.
\end{align}

Since $\delta<\varepsilon$, $\mathcal{F}\subset\Omega$ and in light of the inequalities~\eqref{AMpsi} and ~\eqref{AIpsi}, all what is needed to show now is approximate equivariance with an appropriate map $v:\Gamma\rightarrow\mathcal{U}(\mathbb{M}_{d})$. Note that for $r\in F$ or $r=st$ with $s,t\in F$ we have
\begin{align*}\|\psi(u_{r})^{*}\psi(u_{r})-1\|&=\|\psi(u_{r}^{*})\psi(u_{r})-\psi(u_{r}^{*}u_{r})\|<\delta,\\
\|\psi(u_{r})\psi(u_{r})^{*}-1\|&=\|\psi(u_{r})\psi(u_{r}^{*})-\psi(u_{r}u_{r}^{*})\|<\delta,
\end{align*}
therefore, by our choice of $\delta$, there are unitaries $v_{r}$, in $\mathbb{M}_{d}$ for each $r\in F$ or $r=st$ with $s,t\in F$ that satisfy $\|v_{r}-\psi(u_{r})\|<\varepsilon/4K$. Extend $v:\Gamma\rightarrow\mathcal{U}(\mathbb{M}_{d})$ arbitrarily. We need to show that for every $a$ in $\mathcal{F}$ and $s,t\in F$
\begin{gather}\label{AE}
\|\psi(\alpha_{s}(a))-\Ad_{v_{s}}(\psi(a))\|=\|\psi(u_{j}au_{j}^{*})-v_{j}\psi(a)v_{j}^{*}\|<\varepsilon,\\
\label{un}\|v_{st}-v_{s}v_{t}\|<\varepsilon.
\end{gather}
To this end, first note that~\eqref{AIpsi} implies that for each $x\in\Omega$
\[\|\psi(x)\|=\|\psi(x)\|-\|x\|+\|x\|\leq \big|\|\psi(x)-\|x\|\big|+\|x\|< \delta+(K-1)<K.\]
Using \eqref{AMpsi} along with the definition of the $v_{s}$ yields
\begin{align*}\|\psi(u_{s}au_{s}^{*})-v_{s}\psi(a)v_{s}^{*}\|&\leq \|\psi(u_{s}au_{s}^{*})-\psi(u_{s}a)\psi(u_{s}^{*})\|+\|\psi(u_{s}a)\psi(u_{s}^{*})-\psi(u_{s}a)v_{s}^{*}\|\\
&\qquad +\|\psi(u_{s}a)v_{s}^{*}-\psi(u_{s})\psi(a)v_{s}^{*}\|+\|\psi(u_{s})\psi(a)v_{s}^{*}-v_{s}\psi(a)v_{s}^{*}\|\\
&<\delta + \|\psi(u_{s}a)\|\|\psi(u_{s})^{*}-v_{s}^{*}\|\\
&\qquad+\|\psi(u_{s}a)-\psi(u_{s})\psi(a)\|\|v_{s}^{*}\|+\|\psi(u_{s})-v_{s}\|\|\psi(a)v_{s}^{*}\|\\
&<\delta +K\cdot\frac{\varepsilon}{4K}+\delta+\frac{\varepsilon}{4K}\cdot K<4\cdot\frac{\varepsilon}{4}=\varepsilon,
\end{align*}
which establishes~\eqref{AE}. To see~\eqref{un},
\begin{align*}\|v_{st}-v_{s}v_{t}\|&\leq\|v_{st}-\psi(u_{st})\|+\|\psi(u_{s}u_{t})-\psi(u_{s})\psi(u_{t})\|+
\|\psi(u_{s})\psi(u_{t})-v_{s}\psi(u_{t})\|+\|v_{s}\psi(u_{t})-v_{s}v_{t}\|\\ &\leq \frac{\varepsilon}{4K}+\delta+\|\psi(u_{t})\|\frac{\varepsilon}{4K}+\frac{\varepsilon}{4K}<\varepsilon.
\end{align*}
The action is thus MF. If $A$ is amenable, Proposition~\ref{QDactionequalsMFaction} ensures that $\alpha$ is QD  and the proof of (3) is complete. The proof of (2) is identical except for the fact that we may choose $\psi$ to be completely positive provided that $B$ is quasidiagonal.
\end{proof}

We now embark on establishing partial converses to Proposition~\ref{MF cross implies MF act}. Reduced crossed products emerging from a QD \cstar-system exhibit a finite-dimensional approximating property, they admit norm microstates. Indeed, it was shown in~\cite{KN}, in the separable case, that if $\alpha$ is quasidiagonal and $\cstar_{\lambda}(\Gamma)$ is MF, the reduced crossed product algebra $A\rtimes_{\lambda,\alpha}\Gamma$ is also MF. However, we must point out that the definition of a QD action in~\cite{KN} is somewhat stronger than Defintion~\ref{defnMFaction}. In lieu of a local approximately multiplicative map $v:\Gamma\rightarrow\mathcal{U}(\mathbb{M}_{d})$, the authors require a legitimate action $\gamma:\Gamma\curvearrowright\mathbb{M}_{d}$  with
\begin{enumerate}
\item[(c$'$)] $\|\varphi(\alpha_{s}(a))-\gamma_{s}(\varphi(a))\|<\varepsilon$, for every $a\in\Omega$ and $s\in F$.
\end{enumerate}
Assuming that such an action $\gamma$ exists, apply the GNS construction to $(\mathbb{M}_{d},\tau)$, where $\tau$ is the unique faithful tracial state on $\mathbb{M}_{d}$, to obtain the faithful representation $\pi_{\tau}:\mathbb{M}_{d}\rightarrow\mathbb{B}(L^{2}(\mathbb{M}_{d},\tau))\cong\mathbb{M}_{d^{2}}$. Then define unitaries in $\mathbb{M}_{d^{2}}$ by $v_{s}(\hat x)=\widehat{\gamma_{s}(x)}$ for $s\in\Gamma$ and $x\in\mathbb{M}_{d}$. It is easily verified that $v:\Gamma\rightarrow\mathcal{U}(\mathbb{M}_{d^2})$ is in fact a unitary representation satisfying $v_{s}\pi_{\tau}(x)v_{s}^{*}=\pi_{\tau}(\gamma_{s}(x))$ for $x\in\mathbb{M}_{d}$. Replacing $\varphi$ by $\pi_{\tau}\circ\varphi$ and $\mathbb{M}_{d}$ by $\mathbb{M}_{d^2}$ we then have an MF (or QD) action in the above sense of~\ref{defnMFaction}. Therefore Definition~\ref{defnMFaction} is a weakening of that given in~\cite{KN}. With some extra work one can still prove Theorem 3.4 in~\cite{KN} with our weakened definition of a QD action. We include this result for completeness along with other partial converses to Proposition~\ref{MF cross implies MF act}.

\begin{thm}\label{KerrNowak} Let $\Gamma$ be a discrete group and $\alpha:\Gamma\rightarrow\Aut(A)$ an action on a separable unital \cstar-algebra $A$.
\begin{enumerate}
\item The reduced crossed product $A\rtimes_{\lambda,\alpha}\Gamma$ is RFD if and only if $\cstar_{\lambda}(\Gamma)$ is RFD and $\alpha$ is RFD.
\item If $\cstar_{\lambda}(\Gamma)$ is MF and $\alpha$ is QD, then the reduced crossed product $A\rtimes_{\lambda,\alpha}\Gamma$ is MF.
\item If $\cstar_{\lambda}(\Gamma)$ is QD and $\alpha$ is MF, then the reduced crossed product $A\rtimes_{\lambda,\alpha}\Gamma$ is MF.
\end{enumerate}
\end{thm}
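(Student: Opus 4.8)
The plan is to establish all three parts through one mechanism: tensor the matricial approximation of the action against the matricial approximation of the group \cstar-algebra, and then show that the resulting finite-dimensional model computes the \emph{reduced} crossed product norm. The ``only if'' direction of (1) is just Proposition~\ref{MF cross implies MF act}(1), so the content is the three converses. In each case I would first reduce, by density of the algebraic crossed product, to producing for each $\varepsilon>0$ and each finite $\Omega\subset A$, $F\subset\Gamma$ a map on elements $x=\sum_{s\in F}a_su_s$. From the action I extract $\varphi:A\to\mathbb{M}_d$ (a $\ast$-homomorphism for (1), u.c.p.\ when $\alpha$ is QD, merely $\ast$-linear when $\alpha$ is MF) together with unitaries $v_s$ that are approximately multiplicative and approximately implement $\alpha$, i.e.\ $\varphi(\alpha_s(a))\approx\Ad_{v_s}(\varphi(a))$; from $\cstar_\lambda(\Gamma)$ I extract $\theta:\cstar_\lambda(\Gamma)\to\mathbb{M}_k$ (a $\ast$-homomorphism for (1), u.c.p.\ when the group is QD, $\ast$-linear when it is MF) that is approximately multiplicative and nearly isometric on $\operatorname{span}\{\lambda_s:s\in F\}$, and I set $w_s=\theta(\lambda_s)$, perturbing to honest unitaries via the perturbation lemma used in the proof of Proposition~\ref{MF cross implies MF act}. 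The candidate map is $\Psi\big(\sum_s a_su_s\big)=\sum_s\varphi(a_s)v_s\otimes w_s\in\mathbb{M}_d\otimes\mathbb{M}_k$.

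The conceptual backbone of the norm estimate is Fell's absorption principle. When $\varphi=\pi$ and $v$ are genuine (the RFD case), the pair $(\pi\otimes1,\,v\otimes\lambda)$ on $\mathbb{M}_d\otimes\ell^2(\Gamma)$ is covariant, and Fell's unitary $W(\xi\otimes\delta_t)=v_t\xi\otimes\delta_t$ conjugates it onto the regular representation of $A\rtimes_{\lambda,\alpha}\Gamma$ attached to $\pi$; hence its integrated form $\Xi$ lands in $\mathbb{M}_d\otimes_{\min}\cstar_\lambda(\Gamma)$ and satisfies $\|\Xi(x)\|=\|\sum_s\pi(a_s)v_s\otimes\lambda_s\|$. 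Two observations then complete (1). First, $\Psi=(\id_{\mathbb{M}_d}\otimes\,\theta)\circ\Xi$ is a genuine $\ast$-homomorphism of the reduced crossed product; since $\theta$ is a $\ast$-homomorphism it is completely contractive, giving $\|\Psi(x)\|\le\|\Xi(x)\|$, and RFD of $\cstar_\lambda(\Gamma)$ can be arranged so that $\id_{\mathbb{M}_d}\otimes\theta$ is nearly isometric on the finite-dimensional subspace $\mathbb{M}_d\otimes\operatorname{span}\{\lambda_s\}$, giving the reverse inequality up to $\varepsilon$. Second, the norms $\|\Xi(x)\|$ recover the reduced norm: as $\pi$ ranges over the (jointly faithful) RFD approximations of $A$, the direct sum of the associated regular representations is the regular representation attached to a faithful representation of $A$, so $\sup_\pi\|\Xi(x)\|=\|x\|_\lambda$, and a single finite direct sum handles the finitely many elements at hand. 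This yields the $\ast$-homomorphism required for RFD of the crossed product.

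For (2) and (3) I would run exactly the same construction, now with one factor only $\ast$-linear and the other u.c.p.: in (2) the action is QD so $\varphi$ is u.c.p.\ and $\theta$ is $\ast$-linear, while in (3) these roles are interchanged. Approximate multiplicativity of $\Psi$ on the finite set is a bookkeeping estimate of precisely the kind carried out in the proof of Proposition~\ref{MF cross implies MF act}, combining approximate multiplicativity of $\varphi$, the approximate covariance $\varphi(\alpha_s(a))\approx\Ad_{v_s}(\varphi(a))$, and approximate multiplicativity of $s\mapsto v_s$ and $s\mapsto w_s$; if a completely positive $\varphi$ is needed it is furnished by Proposition~\ref{QDactionequalsMFaction}. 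The remaining point is the isometry estimate $\big|\,\|\Psi(x)\|-\|x\|_\lambda\,\big|<\varepsilon$.

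This isometry estimate is the main obstacle, and it is where the asymmetric hypotheses earn their keep. Because none of $\varphi,v,w$ is now exact, $(\varphi\otimes1,\,v\otimes\lambda)$ is only approximately covariant, so Fell's principle applies only up to a controlled error and I must ensure the finite-dimensional model still sees the \emph{reduced} norm rather than some larger crossed-product norm. The upper bound $\|\Psi(x)\|\le\|x\|_\lambda+\varepsilon$ is obtained by realizing $\Psi$, up to $\varepsilon$, as a completely contractive map applied to a representation built through $\lambda$ (hence weakly contained in the regular representation): here the u.c.p.\ factor, whether it is $\varphi$ in (2) or $\theta$ in (3), supplies the genuine contractivity that prevents the norm from overshooting. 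The lower bound $\|\Psi(x)\|\ge\|x\|_\lambda-\varepsilon$ combines the near-complete-isometry coming from the MF factor with the fact, as in (1), that the action's approximations are jointly faithful and therefore asymptotically recover the full reduced norm. Carefully propagating the three independent error scales, from the perturbation of $w_s$ to unitaries, from approximate covariance, and from the replacement of $\lambda$ by $w$, so that all four conditions of Definition~\ref{defnMFaction} hold simultaneously, is the principal technical burden of the argument.
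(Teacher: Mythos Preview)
Your approach is genuinely different from the paper's, and it is worth spelling out the contrast.

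The paper does not work locally at all. For (1) it assembles the RFD data into a single $\ast$-monomorphism $\pi:A\hookrightarrow M:=\prod_n\mathbb{M}_{k_n}$ and a single unitary representation $v:\Gamma\to\mathcal{U}(M)$; the point is that $\pi$ is then an honest equivariant embedding of $(A,\alpha)$ into $(M,\Ad_v)$, so $A\rtimes_{\lambda,\alpha}\Gamma$ embeds into $M\rtimes_{\lambda,\Ad_v}\Gamma$. Since $\Ad_v$ is inner, the latter is $M\otimes_{\min}\cstar_\lambda(\Gamma)$, and one finishes by citing that a minimal tensor product of RFD algebras is RFD. For (3) the same trick is run with $M=\prod_n\mathbb{M}_{k_n}/\bigoplus_n\mathbb{M}_{k_n}$: the approximate data become exact in the quotient (this is where conditions (i)--(iv) of the MF action are used), one again lands in $M\otimes_{\min}\cstar_\lambda(\Gamma)$, and the proof concludes by invoking Rosenberg's theorem (QD of $\cstar_\lambda(\Gamma)$ forces $\Gamma$ amenable, hence $\cstar_\lambda(\Gamma)$ nuclear) together with Blackadar--Kirchberg's result that $M\otimes_{\min}N$ is MF whenever $M$ is MF and $N$ is nuclear and MF. Part (2) is handled in \cite{KN} by the same global mechanism. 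So the paper never touches Fell absorption or any local norm estimate: the ``inner action $\Rightarrow$ tensor product'' identification plus functoriality of the reduced crossed product under equivariant embeddings does all the work.

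Your route---building $\Psi$ by hand and controlling $\big|\,\|\Psi(x)\|-\|x\|_\lambda\,\big|$ via an approximate Fell argument---is in principle viable and has the virtue of being self-contained, but two points deserve attention. First, the upper bound in (2) and (3) is not as immediate as you suggest: when the $\ast$-linear factor is $\varphi$ (case (3)) there is no a priori reason $\varphi$ is contractive on the relevant finite set, and when it is $\theta$ (case (2)) the same issue arises for $\id_{\mathbb{M}_d}\otimes\theta$; you really do need to pass through the quotient (equivalently, use that the asymptotic map is a genuine $\ast$-homomorphism) to get contractivity, which is exactly the paper's move. Second, in (3) you make no use of amenability of $\Gamma$, whereas the paper leans on it essentially via Rosenberg and \cite[Prop.~3.3.6]{BK}; if your local argument truly avoids this you should say why, since the MF-permanence of $\otimes_{\min}$ without a nuclearity hypothesis on one factor is not available in general. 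In short: your (1) is fine and essentially a localized version of the paper's argument, but your (2) and (3) would benefit from either carrying out the norm estimates in full or, more economically, adopting the paper's global embedding into $M\otimes_{\min}\cstar_\lambda(\Gamma)$.
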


\begin{proof} (1): Consider an RFD action $\alpha:\Gamma\curvearrowright A$. We then have a sequence $\ast$-homomorphisms $\pi_{n}:A\rightarrow\mathbb{M}_{k_{n}}$ and unitary representations $v_{n}:\Gamma\rightarrow\mathcal{U}(\mathbb{M}_{k_{n}})$ such that for every $s\in\Gamma$ and $a\in A$:
\begin{enumerate}
\item[(i)] $\|\pi_{n}(a)\|\nearrow\|a\|$ as $n\rightarrow\infty$.
\item[(ii)] $\pi_{n}(\alpha_{s}(a))=\Ad_{v_{n}(s)}(\pi_{n}(a))$.
\end{enumerate}

For economy write $M=\prod_{n=1}^{\infty}\mathbb{M}_{k_{n}}$, and $\mathcal{U}(k_{n})= \mathcal{U}(\mathbb{M}_{k_{n}})$. Consider the unitary representation $v:\Gamma\rightarrow\mathcal{U}(M)\cong\prod_{n\geq1}\mathcal{U}(k_{n})$ given by $v(s):=(v_{n}(s))_{n\geq1}$, and the $\ast$-homomorphism $\pi:A\rightarrow M$ defined by $\pi(a):=(\pi_{n}(a))_{n\geq1}$. Also set $\beta_{s}=\Ad_{v(s)}$, so that $\beta:\Gamma\rightarrow\Aut(M)$ is an action. Condition (i) ensures that $\pi$ is injective, and condition (ii) implies equivariance of $\pi$, that is $\pi(\alpha_{s}(a))=\beta_{s}(\pi(a))$ for every $s\in\Gamma$ and $a\in A$. 
We thus have an monomorphism of \cstar-dynamical systems $\pi:(A,\Gamma,\alpha)\rightarrow(M,\Gamma,\beta)$. Therefore, $A\rtimes_{\lambda,\alpha}\Gamma\hookrightarrow M\rtimes_{\lambda,\beta}\Gamma$. Since $\beta$ is an inner action, we know that $M\rtimes_{\lambda,\beta}\Gamma\cong M\otimes_{\min}\cstar_{\lambda}(\Gamma)$, whence
\[A\rtimes_{\lambda,\alpha}\Gamma\hookrightarrow M\otimes_{\min}\cstar_{\lambda}(\Gamma).\]

Now simply note that since both $M$ and $\cstar_{\lambda}(\Gamma)$ are RFD algebras, so is their  minimal tensor product $M\otimes_{\min}\cstar_{\lambda}(\Gamma)$. Being RFD passes to subalgebras, so we conclude $A\rtimes_{\lambda,\alpha}\Gamma$ is RFD.

(2): (The proof of this is essentially the same proof as Theorem 3.4 in~\cite{KN}.)

(3): We have a sequence of  $\ast$-linear maps $\varphi_{n}:A\rightarrow\mathbb{M}_{k_{n}}$ and maps $v_{n}:\Gamma\rightarrow\mathcal{U}(\mathbb{M}_{k_{n}})$ such that for every $s,t\in\Gamma$ and $a,b\in A$:
\begin{enumerate}
\item[(i)] $\|\varphi_{n}(ab)-\varphi_{n}(a)\varphi_{n}(b)\|\rightarrow 0$, as $n\rightarrow\infty$.
\item[(ii)]$\|\varphi_{n}(a)\|\rightarrow\|a\|$ as $n\rightarrow\infty$.
\item[(iii)]$\|\varphi_{n}(\alpha_{s}(a))-\Ad_{v_{n}(s)}(\varphi_{n}(a))\|\rightarrow 0$, as $n\rightarrow\infty$.
\item[(iv)]$\|v_{n}(st)-v_{n}(s)v_{n}(t)\|\rightarrow0$, as $n\rightarrow\infty$.
\end{enumerate}

Write $M=\prod_{n=1}^{\infty}\mathbb{M}_{k_{n}}/\bigoplus_{n=1}^{\infty}\mathbb{M}_{k_{n}}$ and $\pi:\prod_{n=1}^{\infty}\mathbb{M}_{k_{n}}\rightarrow M$ the canonical quotient map. Now consider the maps $\Phi:A\rightarrow M$ and $v:\Gamma\rightarrow\mathcal{U}(M)$ given by
\[\Phi(a):=\pi[(\phi_{n}(a))_{n}]\qquad v(s):=\pi[(v_{n}(s))_{n}].\]
By properties (i) and (ii) $\Phi$ is an $*$-monomorphism, and by property (iv) $v$ is a unitary representation. Therefore, we have an inner action $\beta:\Gamma\curvearrowright M$ defined by $\beta_{s}:=\Ad_{v_{s}}$ for $s\in\Gamma$. Note that property (iii) implies that $\beta_{s}(\Phi(a))=\Phi(\alpha_{s}(a))$ for every $a\in A$ and $s\in\Gamma$. Indeed
\begin{align*}\beta_{s}(\Phi(a))&=\Ad_{v_{s}}(\Phi(a))=\pi[(v_{n}(s))_{n}]\pi[(\phi_{n}(a))_{n}]\pi[(v_{n}(s))_{n}]^{*}
=\pi[(v_{n}(s)\phi_{n}(a)v_{n}(s)^{*})_{n}]\\&=\pi[(\phi_{n}(\alpha_{s}(a)))_{n}]=\Phi(\alpha_{s}(a)).
\end{align*}

We thus have a monomorphism of \cstar-dynamical systems $\Phi:(A,\Gamma,\alpha)\rightarrow(M,\Gamma,\beta)$. Therefore, $A\rtimes_{\lambda,\alpha}\Gamma\hookrightarrow M\rtimes_{\lambda,\beta}\Gamma$. Since $\beta$ is an inner action, by Example 9.11 of~\cite{Ph} we know that $M\rtimes_{\lambda,\beta}\Gamma\cong M\otimes_{\min}\cstar_{\lambda}(\Gamma)$, whence
\[A\rtimes_{\lambda,\alpha}\Gamma\hookrightarrow M\otimes_{\min}\cstar_{\lambda}(\Gamma).\]

Since $\cstar_{\lambda}(\Gamma)$ is QD, $\Gamma$ is amenable, and so $\cstar_{\lambda}(\Gamma)$ is nuclear. It follows from Proposition 3.3.6 of~\cite{BK} that  $M\otimes_{\min}\cstar_{\lambda}(\Gamma)$ is MF, which implies that $A\rtimes_{\lambda,\alpha}\Gamma$ is MF.
\end{proof}

The following is Proposition 3.3 in~\cite{KN} and justifies the comments made prior to defining MF actions.

\begin{prop}\label{RFactionimpliesQDaction}Let $(X,d)$ be a compact metric space and $h:\Gamma\rightarrow\Homeo(X)$ a continuous action with induced action $\alpha$ on $C(X)$. If $h$ is residually finite, then $\alpha$ is quasidiagonal.
\end{prop}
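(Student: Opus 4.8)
The plan is to reduce to the local characterization of a QD action in Definition~\ref{defnMFaction} and then construct the required data directly from a single finite model supplied by residual finiteness. Fix $\varepsilon>0$, a finite set $\Omega\subset C(X)$, and a finite set $F\subset\Gamma$. Each $f\in\Omega$ is continuous on the compact space $X$, hence uniformly continuous, so I can choose $\delta\in(0,\varepsilon)$ so that $d(p,q)<\delta$ forces $|f(p)-f(q)|<\varepsilon$ for every $f\in\Omega$ at once (take the minimum of the finitely many moduli of continuity). I would then invoke residual finiteness with parameter $\delta$ and the \emph{symmetric} finite set $F\cup F^{-1}$, producing a finite set $E$, an action $k:\Gamma\to\Perm(E)$, and a map $\zeta:E\to X$ satisfying (1) and (2).

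Next I would build the approximating maps on $\ell^2(E)$, identifying $\mathbb{M}_d\cong\mathbb{B}(\ell^2(E))$ with $d=|E|$. Define $\varphi:C(X)\to\mathbb{M}_d$ by the diagonal formula $\varphi(f)\delta_z=f(\zeta(z))\delta_z$; this is exactly the pullback $f\mapsto f\circ\zeta$ followed by the diagonal embedding $C(E)\hookrightarrow\mathbb{M}_d$, hence a unital $\ast$-homomorphism. In particular $\varphi$ is completely positive (meeting the QD requirement on $\varphi$) and exactly multiplicative, so condition (a) holds with zero error. For the unitaries I set $v_s\delta_z=\delta_{k_s(z)}$, the permutation unitary implementing $k_s$; because $k$ is a genuine group action, $v$ is an honest unitary representation of $\Gamma$, so $\|v_{st}-v_sv_t\|=0$ and condition (d) is exact.

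It remains to verify the two genuinely approximate conditions. For the norm estimate (b), observe $\|\varphi(f)\|=\max_{z\in E}|f(\zeta(z))|\le\|f\|$ always, while the $\delta$-density of $\zeta(E)$ together with uniform continuity furnishes a point of $\zeta(E)$ within $\delta$ of a maximizer of $|f|$, giving $\|\varphi(f)\|>\|f\|-\varepsilon$. For equivariance (c), a direct computation of the diagonal entries, using $\alpha_s(f)(x)=f(s^{-1}.x)$ and $v_s^{*}=v_{s^{-1}}$, yields
\[\big(\varphi(\alpha_s(f))-\Ad_{v_s}(\varphi(f))\big)\delta_z=\big(f(h_{s^{-1}}(\zeta(z)))-f(\zeta(k_{s^{-1}}(z)))\big)\delta_z.\]
Since $s^{-1}\in F\cup F^{-1}$, property (1) of the finite model bounds $d\big(h_{s^{-1}}(\zeta(z)),\zeta(k_{s^{-1}}(z))\big)<\delta$, so uniform continuity shows this diagonal operator has norm $<\varepsilon$, establishing (c).

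I do not anticipate a deep obstacle: since $\varphi$ is taken to be an actual $\ast$-homomorphism, the multiplicative and positivity requirements cost nothing and all error is geometric. The two points demanding care are (i) replacing $F$ by the symmetric set $F\cup F^{-1}$, which is forced by the contravariance $\alpha_s(f)=f\circ h_s^{-1}$ of the induced action so that property (1) can be applied at $s^{-1}$; and (ii) selecting $\delta$ uniformly over the whole finite family $\Omega$ before calling on residual finiteness, so that one finite model simultaneously handles every datum.
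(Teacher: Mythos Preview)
Your proposal is correct and follows essentially the same approach as the argument the paper cites from~\cite{KN}: dualize the finite model $(E,k,\zeta)$ to obtain the unital $\ast$-homomorphism $\varphi:C(X)\to C(E)\subset\mathbb{M}_{|E|}$ via $f\mapsto f\circ\zeta$, together with the permutation representation $v$ coming from $k$, and then read off (a)--(d) from uniform continuity and $\delta$-density. The paper does not write out the proof in full but refers to it later (in the proof of the $K$-theoretic characterization for zero-dimensional $X$) precisely as producing a unital $\ast$-homomorphism into a finite commutative algebra with an equivariance estimate, which is exactly your construction; your attention to symmetrizing $F$ and choosing $\delta$ uniformly over $\Omega$ handles the only genuine subtleties.
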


The question emerges of whether the converse to the previous result holds. The authors of~\cite{KN} showed that in the case of an action $h:\mathbb{F}_{r}\curvearrowright X$ on a compact zero-dimensional metric space, $h$ is residually finite if and only if the induced action on $C(X)$ is quasidiagonal.

In the same vein we relate RFD actions on compact metric spaces with RFD actions at the algebraic level.

\begin{prop}\label{RFDactions} Let $h:\Gamma\rightarrow\Homeo(X)$ be a continuous action on a compact metric space $(X,d)$, and $\alpha:\Gamma\rightarrow\Aut(C(X))$ the induced action. Then $h$ is RFD if and only if $\alpha$ is RFD.
\end{prop}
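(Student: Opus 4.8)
The plan is to prove the two implications separately: the forward direction $h\text{ RFD}\Rightarrow\alpha\text{ RFD}$ is a direct matricial construction, while the converse is an extraction of a finite invariant dense set from the spectral data of the approximating representations, and it is there that the real work lies.

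Suppose first that $h$ is RFD, and fix $\varepsilon>0$ together with a finite $\Omega\subset C(X)$. I would invoke uniform continuity to choose $\delta>0$ so that $d(x,y)<\delta$ forces $|f(x)-f(y)|<\varepsilon$ for every $f\in\Omega$, and then apply the RFD property of $h$ at scale $\delta$ to obtain a finite $\Gamma$-set $(E,k)$ and an exactly equivariant, $\delta$-dense map $\zeta:E\rightarrow X$. Setting $d=|E|$, I define $\pi:C(X)\rightarrow\mathbb{M}_{d}$ by $\pi(f)=\diag(f(\zeta(z)))_{z\in E}$, a unital $\ast$-homomorphism, together with the permutation representation $v:\Gamma\rightarrow\mathcal{U}(\mathbb{M}_{d})$ determined by $v(s)e_{z}=e_{k_{s}(z)}$. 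A direct computation using $\alpha_{s}(f)(x)=f(s^{-1}.x)$ and the equivariance $\zeta(k_{s}(z))=h_{s}(\zeta(z))$ shows $\pi(\alpha_{s}(f))=\Ad_{v(s)}(\pi(f))$, giving condition (b). Since $\|\pi(f)\|=\max_{z\in E}|f(\zeta(z))|$ and $\zeta(E)$ is $\delta$-dense, the choice of $\delta$ yields $\|\pi(f)\|>\|f\|-\varepsilon$ for $f\in\Omega$, which is (a). Hence $\alpha$ is RFD.

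For the converse, assume $\alpha$ is RFD and fix $\varepsilon>0$. The key structural fact is that for any unital $\ast$-homomorphism $\pi:C(X)\rightarrow\mathbb{M}_{d}$ the commutative algebra $\pi(C(X))$ is simultaneously diagonalizable, so there is a finite set $Y\subset X$ (the support of the associated spectral measure) and an orthogonal decomposition $\mathbb{C}^{d}=\bigoplus_{y\in Y}H_{y}$ into joint eigenspaces $H_{y}=\{\eta:\pi(f)\eta=f(y)\eta\ \forall f\}$, with $Y$ exactly the set of $y$ for which $H_{y}\neq 0$. The first task is to show that $Y$ is $\Gamma$-invariant once $\pi$ is equipped with an exactly equivariant $v$. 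Rewriting (b) as $\pi(\alpha_{s}(f))v(s)=v(s)\pi(f)$ and using $\alpha_{s}^{-1}(g)(y)=g(s.y)$, I would show that $\eta\in H_{y}$ implies $\pi(g)v(s)\eta=g(s.y)v(s)\eta$ for all $g$, i.e. $v(s)\eta\in H_{s.y}$; as $v(s)$ is unitary this forces $H_{s.y}\neq 0$, so $s.y\in Y$, and $Y$ is $\Gamma$-invariant.

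The second and main task is to arrange $\varepsilon$-density, which is where the norm condition (a) must be converted into geometry; this is the chief obstacle, since (a) only controls a prescribed finite $\Omega$ and does not automatically see all of $X$. I would cover $X$ by balls $B(x_{1},\varepsilon/2),\dots,B(x_{N},\varepsilon/2)$ and choose bump functions $f_{i}\in C(X)$ with $0\le f_{i}\le 1$, $f_{i}(x_{i})=1=\|f_{i}\|$ and $\supp f_{i}\subset B(x_{i},\varepsilon/2)$. Applying the RFD property of $\alpha$ to $\Omega=\{f_{1},\dots,f_{N}\}$ with tolerance $1/2$ produces $\pi$ and $v$ as above with $\|\pi(f_{i})\|=\max_{y\in Y}f_{i}(y)>1/2>0$, so some $y_{i}\in Y$ lies in $\supp f_{i}\subset B(x_{i},\varepsilon/2)$. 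Then for arbitrary $x\in X$ I pick $x_{i}$ with $d(x,x_{i})<\varepsilon/2$ and conclude $d(x,y_{i})<\varepsilon$, so $Y$ is $\varepsilon$-dense. Finally I would take $E=Y$ with the restricted action $k_{s}=h_{s}|_{Y}$, well-defined by invariance, and $\zeta:Y\hookrightarrow X$ the inclusion; this is an exactly equivariant, $\varepsilon$-dense finite subsystem, so $h$ is RFD.
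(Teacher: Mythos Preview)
Your proof is correct and follows essentially the same approach as the paper: both directions hinge on the identification of a unital $\ast$-homomorphism $\pi:C(X)\to\mathbb{M}_d$ with a finite subset of $X$ carrying an induced action, and the density is extracted from the norm condition via bump functions. The only cosmetic differences are that the paper phrases the converse via Gelfand duality on $\pi(C(X))\cong C(E)$ rather than your joint-eigenspace decomposition, and it establishes $\varepsilon$-density by a sequential contradiction argument rather than your direct finite covering---your route is arguably cleaner on both counts.
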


\begin{proof} Consider first a RFD transformation group $(X,\Gamma)$. Let $g\in C(X)$ and $\varepsilon>0$ be given. By compactness there is a $\delta>0$ such that
\[x,y\in X,\quad d(x,y)<\delta\Longrightarrow |g(x)-g(y)|<\varepsilon.\]
We then obtain a finite set $E$, an action $\Gamma\curvearrowright E$ and a map $\zeta:E\rightarrow X$ with $\zeta(E)\subset_{\delta}X$ and $\zeta(s.z)=s.\zeta(z)$ for every $z\in E$ and $s\in\Gamma$. Dualize by defining $\pi:C(X)\rightarrow C(E)$ as $\pi(f)=f\circ\zeta$ for $f\in C(X)$ and $\gamma_{s}(k)(z):=k(s.^{-1}z)$ for $k\in C(E)$, $s\in\Gamma$ and $z\in E$. The equivariance is straightforward, indeed for $f\in C(X)$, $s\in\Gamma$, $z\in E$ we have
\[\pi(\alpha_{s}(f))(z)=\alpha_{s}(f)(\zeta(z))=f(s^{-1}.(\zeta(z))=f(\zeta(s^{-1}.z))=\gamma_{s}(f\circ\zeta)(z)
=\gamma_{s}(\pi(f))(z)\]
which implies $\pi(\alpha_{s}(f))=\gamma_{s}(\pi(f))$ for every $f\in C(X)$ and $s\in\Gamma$.

For the approximate isometry condition, fix $x$ in $X$, and pick up a $z_{x}\in E$ with $d(x,\zeta(z_{x}))<\delta$. Then observe
\[|g(x)|\leq |g(x)-g(\zeta(z_{x}))|+|g(\zeta(z_{x}))|<\varepsilon+\sup_{z\in E}|g\circ\zeta(z)|=\varepsilon+\|\pi(g)\|.\]
Taking a supremum over all $x\in X$ gives $\|g\|\leq \varepsilon+\|\pi(g)\|$.

Conversely, suppose that $\alpha$ is an RFD action. We then have a sequence of finite dimensional representations $\pi_{n}:C(X)\rightarrow\mathbb{M}_{k_{n}}$ and actions $\gamma_{n}:\Gamma\curvearrowright\mathbb{M}_{k_{n}}$ such that \begin{enumerate}
\item[(i)] $\|\pi_{n}(f)\|\nearrow\|f\|$ for each $f\in C(X)$
\item[(ii)] $\pi_{n}(\alpha_{s}(f))=\gamma_{n,s}(\pi_{n}(f))$ for every $s\in\Gamma$, $f\in C(X)$ and $n\geq1$.
\end{enumerate}

Fix an $n\geq 1$ and note that $\pi_{n}(C(X))$ is a finite dimensional commutative algebra, therefore isomorphic to $C(E_{n})$ for some finite set $E_{n}$. Also note that $\pi_{n}(C(X))$ is invariant under the action $\gamma_{n}$ by condition $(ii)$, so by restricting, we may suppose $\Gamma$ acts on $C(E_{n})$ via $\gamma_{n}$. We therefore have maps $\zeta_{n}:E_{n}\rightarrow X$ with $\pi_{n}(f)=f\circ\zeta_{n}$ for each $f\in C(X)$, and actions $\Gamma\curvearrowright E_{n}$ implemented by the homomorphisms $\gamma_{n}$. We verify the promised equivariance. Indeed, for $f\in C(X)$, $z\in E_{n}$ and $s\in\Gamma$ we have
\[f(s^{-1}.\zeta_{n}(z))=\alpha_{s}(f)(\zeta_{n}(z))=\pi_{n}(\alpha_{s}(f))(z)=\gamma_{n,s}(\pi_{n}(f))(z)
=\pi_{n}(f)(s^{-1}.z)=f(\zeta_{n}(s^{-1}.z)).\]
Recall that $C(X)$ separates points of $X$ so that $s^{-1}.\zeta_{n}(z)=\zeta_{n}(s^{-1}.z)$ for all $s\in\Gamma$ and $z\in E$, and equivariance follows.

Consider now an arbitrary $\varepsilon>0$. We claim that for some $n$ large we have $X\subset_{\varepsilon}\zeta_{n}(E_{n})$. Suppose not. Then for every $n$ there is an $x_{n}\in X$ with $d(\zeta_{n}(z),x_{n})\geq\varepsilon$ for every $z\in E_{n}$. Passing to a subsequence we may assume that $(x_{n})_{n}$ converges to some $x_{0}\in X$. Note that for some $N$ large we have that for every $n\geq N$ and $z\in E_{n}$
\[\varepsilon\leq d(\zeta_{n}(z),x_{n})\leq d(\zeta_{n}(z),x_{0})+d(x_{0},x_{n})\leq d(\zeta_{n}(z),x_{0})+\varepsilon/2,\]
thus $d(\zeta_{n}(z),x_{0})\geq \varepsilon/2$ holds for every such $n$ and $z\in E_{n}$. Now choose a continuous $f:X\rightarrow[0,1]$ with $f(x_{0})=1$ and $\supp(f)\subset\overline{B(x_{0},\varepsilon/3)}$. Thus $\|f\|=1$, but $\pi_{n}(f)=f\circ\zeta_{n}=0$ since $\zeta_{z}(E_{n})\subset \supp(f)^{c}$. This contradicts condition $(i)$, so the Claim holds and the proof is complete.
\end{proof}

We now want to look at some examples of MF actions. The first class of \cstar-systems seems tailored to admit finite-dimensional approximating dynamics.

\begin{example} For a fixed discrete group $\Gamma$, let $(A_{n},\Gamma,\alpha^{(n)})_{n\geq1}$ be a sequence of \cstar-dynamical systems with each $A_{n}$ finite dimensional. By standard inductive limit techniques one constructs the \cstar-system $(A,\Gamma,\alpha)$ where $A:=\bigotimes_{n\geq1}A_{n}$ and $\alpha:=\otimes_{n\geq1}\alpha^{(n)}$ acts via
\[\alpha_{s}(a_{n_{1}}\otimes\cdots\otimes a_{n_{k}})=\alpha_{s}^{(n_{1})}(a_{n_{1}})\otimes\cdots\otimes \alpha_{s}^{(n_{k})}(a_{n_{k}})\qquad s\in\Gamma.\]
Given a finite subset $\Omega\subset A$, one can find a large enough $m$ and approximate each member of $\Omega$ by elements from the subalgebra $B_{m}:=\bigotimes_{n=1}^{m}A_{n}$. The identity map on $B_{m}$ lifts to a u.c.p map $\varphi:A\rightarrow B_{m}$, and $\Gamma$ acts on $B_{m}$ as $\beta^{(m)}=\alpha^{(1)}\otimes\cdots\otimes\alpha^{(m)}$. The conditions for a QD action are now easily verified.
\end{example}

More instances of QD actions will surface as we uncover their theory, but we can immediately provide a wide class of examples. Recall that for a unital \cstar-algebra $A$,
\[\Inn(A)=\{\Ad_{u} : u\in\mathcal{U}(A)\}\leq\Aut(A)\]
denotes the normal subgroup of inner automorphisms, while $\overline{\Inn}(A)\leq\Aut(A)$ is the normal subgroup of all approximately inner automorphisms. Given $\varepsilon>0$, a finite set $\mathcal{F}\subset A$ and $\alpha\in\overline{\Inn}(A)$, there is an inner automorphism $\Ad_{u}$ with $\|\Ad_{u}(x)-\alpha(x)\|\leq\varepsilon$ for every $x\in\mathcal{F}$.

\begin{prop}Let $A$ be a unital AF algebra with $\overline{\Inn}(A)=\Aut(A)$. Then any action $\alpha:\mathbb{F}_{r}\rightarrow\Aut(A)$ of a free group on $A$ is quasidiagonal. In particular, if $A$ is AF with $K_{0}(A)$ totally ordered and Archimedean, or if $A$ is UHF, then any action of $\mathbb{F}_{r}$ on $A$ is quasidiagonal.
\end{prop}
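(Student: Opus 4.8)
The plan is to exploit two features of the hypotheses: that every automorphism of $A$ is approximately inner, and---crucially---that $\mathbb{F}_r$ is \emph{free}, so that a choice of one unitary per generator extends to a genuine group homomorphism into $\mathcal{U}(A)$ with no relations to verify. Fix $\varepsilon>0$ and finite sets $\Omega\subset A$, $F\subset\mathbb{F}_r$; by the remark following Definition~\ref{defnMFaction} we may assume $F$ consists of words in a fixed free generating set $\{g_1,\dots,g_r\}$.

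First I would linearize the action by approximately inner automorphisms implemented by a single homomorphism. Since $\overline{\Inn}(A)=\Aut(A)$, for each generator $g_i$ I can choose $u_i\in\mathcal{U}(A)$ with $\Ad_{u_i}$ agreeing with $\alpha_{g_i}$ up to a small tolerance on a suitably large finite set. Using freeness, define $\rho:\mathbb{F}_r\to\mathcal{U}(A)$ by $\rho(g_i)=u_i$; then $\rho(st)=\rho(s)\rho(t)$ holds \emph{exactly}. Because the words in $F$ have bounded length and are finite in number, a telescoping estimate over the letters of each word shows that, by taking the $u_i$ close enough to $\alpha_{g_i}$ on a large enough set (containing $\Omega$ together with its iterates under the relevant partial words), one arranges $\|\alpha_s(a)-\rho(s)a\rho(s)^*\|$ arbitrarily small for all $s\in F$ and $a\in\Omega$.

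Next I would pass to finite dimensions using that $A$ is AF. Writing $A=\overline{\bigcup_n B_n}$ with $B_n$ finite-dimensional and unital, and using that AF algebras admit approximately multiplicative u.c.p.\ maps $\varphi:A\to B_n$ that are approximately the identity (hence approximately isometric) on a prescribed finite set, I would choose $n$ large and $\varphi$ so that conditions (a) and (b) of Definition~\ref{defnMFaction} hold on $\Omega$ and so that $\varphi$ is approximately multiplicative on the enlarged set $\{\rho(s),\rho(s)^*,\alpha_s(a),\rho(s)a,a:\ s\in F,\ a\in\Omega\}$. Since $\rho(s)^*\rho(s)=1$, approximate multiplicativity forces $\varphi(\rho(s))$ to be close to a unitary, so by the elementary perturbation result already invoked in the proof of Proposition~\ref{MF cross implies MF act} I may replace it by a genuine unitary $v_s\in\mathcal{U}(B_n)$ with $\|v_s-\varphi(\rho(s))\|$ small, extending $v$ arbitrarily elsewhere. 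Condition (d) then follows from $\rho(st)=\rho(s)\rho(t)$ together with approximate multiplicativity of $\varphi$, while condition (c) follows by chaining $\varphi(\alpha_s(a))\approx\varphi(\rho(s)a\rho(s)^*)\approx\varphi(\rho(s))\varphi(a)\varphi(\rho(s))^*\approx\Ad_{v_s}(\varphi(a))$, where the middle step uses that $\varphi$ is $*$-linear. As $B_n$ embeds unitally in some $\mathbb{M}_d$ and $\varphi$ is u.c.p., this exhibits $\alpha$ as a QD action.

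The main obstacle I anticipate is the bookkeeping that makes all four estimates close up simultaneously below $\varepsilon$: the approximate-inner step must be carried out on a finite set large enough to survive the later u.c.p.\ compression, while the approximate multiplicativity of $\varphi$ degrades on longer products, so the tolerances for $\rho$ and for $\varphi$ must be selected in the right order. The conceptual heart, however, is the use of freeness to obtain a genuine implementing homomorphism $\rho$---precisely where the argument would break for a group with relations, and what confines the result to free groups. For the final assertions, if $K_0(A)$ is totally ordered and Archimedean (in particular for $A$ UHF, where $K_0(A)$ is a subgroup of $\mathbb{Q}$), then $(K_0(A),K_0(A)^+,[1_A])$ embeds in $\mathbb{R}$ and admits no nontrivial order-unit-preserving automorphism, so $\OAut(K_0(A))=\{\id\}$; every automorphism therefore induces the identity on $K_0$ and is approximately inner by the classification of AF automorphisms up to approximate unitary equivalence, giving $\overline{\Inn}(A)=\Aut(A)$ and reducing to the case already treated.
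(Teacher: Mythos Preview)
Your argument is correct and uses the same two ingredients as the paper---approximate innerness to replace each $\alpha_{g_i}$ by $\Ad_{u_i}$, freeness to extend this to a genuine homomorphism, and the AF structure to compress to a finite-dimensional algebra---but the order of operations differs. The paper first finds a finite-dimensional $B$ containing approximants to $\Omega$, then chooses $u_j\in\mathcal{U}(A)$ implementing $\alpha_{s_j}$ on $B$, then perturbs these to unitaries $v_j$ lying in a larger finite-dimensional $D\supset B$, and finally takes $\varphi$ to be the Arveson extension of $\id_D$; because $\varphi|_D=\id_D$ is an honest $*$-homomorphism and the $v_j$ live in $D$, the map $s_j\mapsto\Ad_{v_j}$ is a genuine action on $D$ and condition~(d) holds exactly. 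You instead keep the implementing unitaries in $A$, push everything through an approximately multiplicative u.c.p.\ map, and only then perturb to unitaries---which forces you to verify~(d) by hand and to enlarge the set on which $\varphi$ is approximately multiplicative to include the products $\rho(st)$ for $s,t\in F$ (a point your listed set omits but which is easily repaired).

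One simplification you overlooked: the remark after Definition~\ref{defnMFaction} lets you take $F$ to be a finite subset of the \emph{generators themselves}, not merely words in them; with $F=\{g_1,\dots,g_r\}$ your telescoping step disappears entirely. Your treatment of the ``in particular'' clause is more explicit than the paper's (which simply cites Davidson), and your reasoning that a totally ordered Archimedean $K_0$ forces $\OAut(K_0(A))=\{\id\}$, whence every automorphism is approximately inner by Elliott's classification, is exactly right.
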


\begin{proof}Let $0<\varepsilon<1$ and let $\Omega\subset A$  be a finite set. Also, denote the generators of $\mathbb{F}_{r}$ by $s_{1},\dots,s_{r}$. Set $K=\max_{a\in\Omega}\|a\|+1$ and put $\delta=\min\{\varepsilon/(3+2K), \varepsilon/4K, \varepsilon/2\}$. Since $A$ is AF and unital, locate a unital finite-dimensional subalgebra $1_{A}\in B\subset A$ and a finite subset $\Omega'\subset B$ with $\|a-b\|<\delta$ for $a$ in $\Omega$ and $b$ in $\Omega'$.

Since every automorphism on $A$ is approximately inner, there are unitaries $u_{1},\dots, u_{r}\in\mathcal{U}(A)$ such that
\begin{equation}\|u_{j}bu_{j}^{*}-\alpha_{s_{j}}(b)\|<\delta\qquad \forall b\in\Omega',\ \forall j\in\{1,\dots, r\}.
\end{equation}
By standard perturbation results, we can find a unital finite-dimensional algebra $1_{A}\in D\subset A$ along with unitaries $v_{1},\dots,v_{r}$ in $D$ such that $B\subset D$ and $\|u_{j}-v_{j}\|<\delta$ for every $j$. We then have automorphisms of $D$ for each $j=1,\dots,r$, namely $\Ad_{v_{j}}:D\rightarrow D$ given by $\Ad_{v_{j}}(x)=v_{j}xv_{j}^{*}$ for $x\in D$. By the universal property of the free group, the map $\{s_{1},\dots,s_{r}\}\rightarrow\Aut(D)$ where $s_{j}\mapsto \Ad_{v_{j}}$ extends to a group homomorphism
\[\gamma:\mathbb{F}_{r}\rightarrow\Aut(D)\qquad \gamma_{s_{j}}=\Ad_{v_{j}}\quad \forall j\in\{1,\dots,r\}.\]
Appealing to Arveson's Extension Theorem, let $\varphi:A\rightarrow D$ be the u.c.p extension of $\id_{D}:D\rightarrow D$. We work out the necessary estimates. First, since each $a\in\Omega$ is $\delta$-close to a $b\in\Omega'$, we have
\begin{align*}\|\varphi(\alpha_{s_{j}}(a))&-\gamma_{s_{j}}(\varphi(a))\|\\ &\leq\|\varphi(\alpha_{s_{j}}(a))-\varphi(\alpha_{s_{j}}(b))\|+\|\varphi(\alpha_{s_{j}}(b))-\gamma_{s_{j}}(\varphi(b))\|
+\|\gamma_{s_{j}}(\varphi(b))-\gamma_{s_{j}}(\varphi(a))\|\\
&\leq\|a-b\|+\|\varphi(\alpha_{s_{j}}(b))-\gamma_{s_{j}}(\varphi(b))\|+\|a-b\|\\
&\leq 2\delta +\|\varphi(\alpha_{s_{j}}(b))-\gamma_{s_{j}}(\varphi(b))\|.
\end{align*}
Next, we use the fact that $\gamma_{s_{j}}(\varphi(b))=\gamma_{s_{j}}(b)=v_{j}bv_{j}^{*}=\varphi(v_{j}bv_{j}^{*})$ since $\varphi|_{D}=\id_{D}$ and the elements $b$ and $v_{j}bv_{j}^{*}$ all belong to $D$. This together with (1) gives:
\begin{align*}\|\varphi(\alpha_{s_{j}}(b))-\gamma_{s_{j}}(\varphi(b))\|&\leq \|\varphi(\alpha_{s_{j}}(b))-\varphi(u_{j}bu_{j}^{*})\|+\|\varphi(u_{j}bu_{j}^{*})-\varphi(v_{j}bv_{j}^{*})\|\\
&\leq\|\alpha_{s_{j}}(b)-u_{j}bu_{j}^{*}\|+\|u_{j}bu_{j}^{*}-v_{j}bv_{j}^{*}\|\\
&<\delta + \|u_{j}bu_{j}^{*}-v_{j}bv_{j}^{*}\|.
\end{align*}
The unitaries $u_{j}$ and $v_{j}$ are $\delta$-close so we get
\begin{align*}\|u_{j}bu_{j}^{*}-v_{j}bv_{j}^{*}\|&\leq \|u_{j}bu_{j}^{*}-u_{j}bv_{j}^{*}\|+\|u_{j}bv_{j}^{*}-v_{j}bv_{j}^{*}\|=\|u_{j}b(u_{j}^{*}-v_{j}^{*})\|+\|(u_{j}-v_{j})bv_{j}^{*}\|\\
&\leq \|b\|\|u_{j}^{*}-v_{j}^{*}\|+\|u_{j}-v_{j}\|\|b\|\leq 2K\delta
\end{align*}
All of the above estimates yield $\|\varphi(\alpha_{s_{j}}(a))-\gamma_{s_{j}}(\varphi(a))\|<(2K+3)\delta\leq \varepsilon$ for every generator $s_{j}$ and every $a\in\Omega$. This gives the desired approximate equivariance. We still have yet to show that $\varphi$ is approximately isometric and approximately multiplicative on $\Omega$. To that end, let $x,y\in\Omega$ and let $x',y'\in\Omega'$ be their $\delta$-approximations. Note that since $\delta<1$, $\|x\|\leq K-1$ and $\|x-x'\|<\delta$, it easily follows that $\|x'\|\leq K$. A simple triangle inequality gives
\[\|xy-x'y'\|\leq\|xy-x'y\|+\|x'y-x'y'\|\leq\|x-x'\|\|y\|+\|x'\|\|y-y'\|\leq 2\delta K.\]
Similarly $\|\varphi(x)\varphi(y)-\varphi(x')\varphi(y')\|\leq 2\delta K$, since $\varphi$ is contractive. Recalling that $\varphi$  restricted to $D$ is the identity, our above estimates yield
\begin{align*}\|\varphi(xy)-\varphi(x)\varphi(y)\|&\leq \|\varphi(xy)-\varphi(x'y')\|+\|\varphi(x'y')-\varphi(x')\varphi(y')\|+
\|\varphi(x')\varphi(y')-\varphi(x)\varphi(y)\|\\&\leq \|xy-x'y'\|+0+\|\varphi(x)\varphi(y)-\varphi(x')\varphi(y')\|\leq 4\delta K<\varepsilon.
\end{align*}
This gives the approximate multiplicativity. Finally, $\varphi$ is easily seen to be approximately isometric:
\begin{align*}\big|\|\varphi(x)\|-\|x\|\big|&\leq \big|\|\varphi(x)\|-\|\varphi(x')\|\big|+\big|\|\varphi(x')\|-\|x\|\big|\leq
\|\varphi(x)-\varphi(x')\|+\big|\|x'\|-\|x\|\big|\\ &\leq \|\varphi(x-x')\|+\|x-x'\|\leq 2\|x-x'\|\leq 2\delta\leq \varepsilon,
\end{align*}
which confirms that $\alpha$ is indeed quasidiagonal.

If $A$ is a unital AF algebra such that $K_{0}(A)$ is totally ordered and Archimedean then $\overline{\Inn(A)}=\Aut(A)$, which is indeed the case for UHF algebras (see Corollary IV.5.8 in~\cite{Da}).
\end{proof}

The next example of a QD action is a generalization of Voiculescu's notion of an action of $\mathbb{Z}$ admitting $\emph{pseudo-orbits}$ described in~\cite{Vo}. For an algebra $A$, write $\mathcal{F}(A)$ for the collection all of finite-dimensional subalgebras of $A$. Also, if $B,C$ are \cstar-subalgebras of $A$ and $\varepsilon>0$, we shall write $B\subset_{\varepsilon}C$ if $\sup_{b\in \Ball(B)}d(b,\Ball(C))<\varepsilon$, and $d(B,C)$ is defined by
\[d(B,C)=\inf\{\varepsilon>0 : B\subset_{\varepsilon}C\ \mbox{and}\ C\subset_{\varepsilon}B\}.\]

\begin{defn} $(A,\Gamma,\alpha)$ be a \cstar dynamical system. $\alpha$ is said to have the \emph{pseudo-orbit property} if for every $\varepsilon>0$, $F\subset\Gamma$ finite subset and $D\in\mathcal{F}(A)$, there is a finite quotient $\pi:\Gamma\rightarrow\Lambda$ along with a map $\zeta:\Lambda\rightarrow\mathcal{F}(A)$ such that
\begin{enumerate}
\item $D\subset B_{t}:=\zeta(t)$, for every $t\in\Lambda$,
\item $d(\alpha_{s}(B_{t}), B_{\pi(s)t})<\varepsilon$ for every $t\in\Lambda$ and $s\in F$.
\end{enumerate}
\end{defn}

Before stating the Proposition, we remind the reader of a perturbation result due to E. Christensen (see~\cite{Ch}) which reads as following.

\begin{lem} For every $\delta>0$, there is a $\delta_{1}>0$ such that whenever $B$ and $C$ are \cstar-subalgebras of a unital \cstar-algebra $A$ with $B$ finite dimensional and $C\subset_{\delta_{1}}B$, then there is a unitary $u\in A$ with $\|u-1\|<\delta$ and $\Ad_{u}(B)\subset C$.
\end{lem}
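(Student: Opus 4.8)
The plan is to reduce to a manipulation of matrix units and to manufacture an exact copy of the finite-dimensional algebra $B$ inside $C$ that is implemented by a unitary close to $1$. Write $B\cong\bigoplus_{k=1}^{m}\mathbb{M}_{n_{k}}$ and fix a system of matrix units $\{e^{(k)}_{ij}\}$ for $B$, so that norm-control of the $e^{(k)}_{ij}$ controls all of $B$. The near-inclusion hypothesis furnishes, for each matrix unit, an element of $C$ within a distance governed by $\delta_{1}$; the goal is to upgrade these approximate matrix units to a genuine system of matrix units $\{f^{(k)}_{ij}\}\subset C$ that remains uniformly close to $\{e^{(k)}_{ij}\}$, and then to conjugate.

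First I would regularize the diagonal units: each $e^{(k)}_{ii}$ is a projection, and a self-adjoint element of $C$ close to it has spectrum concentrated near $\{0,1\}$, so functional calculus produces a genuine projection in $C$ nearby. A standard successive-orthogonalization argument then replaces these by mutually orthogonal projections $p^{(k)}_{i}\in C$ still close to the $e^{(k)}_{ii}$. Next, applying polar decomposition to elements of $C$ approximating the off-diagonal units $e^{(k)}_{i1}$, compressed between the appropriate $p$'s, yields partial isometries in $C$; setting $f^{(k)}_{ij}=f^{(k)}_{i1}(f^{(k)}_{j1})^{*}$ gives a bona fide system of matrix units in $C$ with $\|e^{(k)}_{ij}-f^{(k)}_{ij}\|$ small. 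Finally, two sufficiently close systems of matrix units are spatially conjugate by a unitary near the identity: the unitary part $u$ obtained from the polar decomposition of $\sum_{k,i}f^{(k)}_{i1}e^{(k)}_{1i}$ satisfies $\Ad_{u}(e^{(k)}_{ij})=f^{(k)}_{ij}$, whence $\Ad_{u}(B)\subset C$ and $\|u-1\|<\delta$.

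The main obstacle is \emph{uniformity of $\delta_{1}$ in $B$}. Carried out naively, each step above accumulates error across the roughly $\sum_{k}n_{k}^{2}$ matrix units, so the resulting bound on $\|u-1\|$ would depend on the number and size of blocks of $B$; yet the lemma demands a single $\delta_{1}=\delta_{1}(\delta)$ valid for every finite-dimensional $B$ at once. This is precisely the delicate point and cannot be absorbed by bookkeeping. The route that does give a dimension-free estimate is to argue globally rather than unit-by-unit: since $B$ is finite-dimensional it is injective, so there is a conditional expectation $E\colon A\rightarrow B$ (for instance, averaging $\Ad_{u}$ over the compact unitary group of $B$), and the near-inclusion can be encoded as a completely bounded, nearly multiplicative map between $B$ and $C$ whose defect is controlled by $\delta_{1}$ independently of $\dim B$. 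Perturbing this map to an honest $\ast$-homomorphism and then to a spatial one yields the unitary with a universal bound $\|u-1\|\le c\,\delta_{1}^{1/2}$ (or similar), from which the required $\delta_{1}$ is read off. I would expect this completely bounded, conditional-expectation argument, rather than the elementary matrix-unit construction, to carry the real content, and it is exactly the part one is entitled to import wholesale from~\cite{Ch}.
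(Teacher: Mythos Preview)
The paper does not prove this lemma at all; it merely quotes it and cites Christensen~\cite{Ch}. So there is no ``paper's proof'' to compare against, and your instinct to defer the substantive dimension-free estimate to~\cite{Ch} is exactly what the paper does. Your diagnosis of the situation is accurate: the elementary matrix-unit perturbation argument you outline is correct and would suffice if $\delta_{1}$ were allowed to depend on $\dim B$ (indeed, that dimension-dependent version is precisely Lemma~\ref{mtx units pert} in the paper), and the entire content of Christensen's theorem lies in the uniformity, which does require the global injectivity/completely-bounded machinery you gesture at.

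One point you should flag rather than silently correct: the hypothesis as printed is $C\subset_{\delta_{1}}B$, meaning the ball of $C$ is approximable by $B$, yet the conclusion conjugates $B$ into $C$. Your first step---finding elements of $C$ near the matrix units of $B$---requires the reverse near-inclusion $B\subset_{\delta_{1}}C$. This is almost certainly a typo in the paper (the only application, in the pseudo-orbit proposition, uses the symmetric hypothesis $d(\cdot,\cdot)<\delta_{1}$, so both directions are available there), but as written the statement does not match the argument you give.
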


\begin{prop} Let $A$ be an AF-algebra, $r\in\mathbb{N}$ and $\alpha:\mathbb{F}_{r}\rightarrow\Aut(A)$ an action with the pseudo-orbit property. Then $\alpha$ is quasidiagonal.
\end{prop}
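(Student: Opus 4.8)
The plan is to mimic the strategy of the preceding proposition---approximate the elements of $\Omega$ inside a finite-dimensional subalgebra, build a u.c.p.\ map by an Arveson extension, and implement $\alpha$ by unitaries---but now the implementing unitaries cannot live in $A$ (the $\alpha_{s_j}$ need not be approximately inner), so I would instead manufacture them on a finite-dimensional model indexed by the quotient group supplied by the pseudo-orbit property. First I would reduce the bookkeeping: by the remark following Definition~\ref{defnMFaction} it suffices to verify the QD estimates for $F=\{s_1,\dots,s_r\}$ the generating set, and since $\mathbb{F}_r$ is free, any assignment $s_j\mapsto V_j\in\mathcal{U}(\mathbb{M}_d)$ extends to a genuine homomorphism $v:\mathbb{F}_r\to\mathcal{U}(\mathbb{M}_d)$, so condition (d) will hold exactly and only (a)--(c) remain. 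Fix $\varepsilon>0$ and finite $\Omega\subset A$ (with $1_A\in\Omega$); choose $\delta>0$ (to be calibrated at the end) and, using that $A$ is AF, a finite-dimensional $1_A\in D\subset A$ with $\Omega\subset_{\delta}D$. Feeding $(\delta_1,F,D)$ into the pseudo-orbit property---where $\delta_1$ is the Christensen threshold attached to $\delta$---produces a finite quotient $\pi:\mathbb{F}_r\to\Lambda$ and subalgebras $B_t=\zeta(t)\supset D$ with $d(\alpha_{s_j}(B_t),B_{\pi(s_j)t})<\delta_1$ for all $t\in\Lambda$ and generators $s_j$.

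Next I would straighten the approximate equivariance into an exact one. The left-translation action $\Gamma\curvearrowright\Lambda$, $t\mapsto\pi(s)t$, is a genuine transitive action, and since $\alpha_{s_j}(B_t)$ is $\ast$-isomorphic to $B_t$ and lies $\delta_1$-close to $B_{\pi(s_j)t}$, Christensen's lemma furnishes unitaries close to $1$ and hence exact $\ast$-isomorphisms $\beta_{j,t}:B_t\xrightarrow{\cong}B_{\pi(s_j)t}$ that agree with $\alpha_{s_j}$ on $\Ball(B_t)$ up to an error controlled by $\delta$. Transitivity then forces all $B_t$ to be mutually isomorphic; I fix a model $B_0$, unital isomorphisms $\mu_t:B_t\to B_0$, and a faithful unital representation $\rho_0$ of $B_0$ on a Hilbert space $H$ in which every isomorphic pair of simple summands occurs with the same multiplicity, so that each element of $\Aut(B_0)$ is spatially implemented by a unitary of $\mathcal{U}(H)$.

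Now I would assemble the covariant finite-dimensional model on $\mathcal{H}=H\otimes\ell^{2}(\Lambda)$, with matrix units $e_{t,t'}$ for $\ell^{2}(\Lambda)$. Writing $\rho_t=\rho_0\circ\mu_t$ for the induced faithful representation of $B_t$ and letting $\varphi_t:A\to B_t$ be the Arveson u.c.p.\ extension of $\id_{B_t}$, set
\[\Phi(a)=\sum_{t\in\Lambda}\rho_t(\varphi_t(a))\otimes e_{t,t},\]
a u.c.p.\ map of $A$ into $\mathbb{M}_{(\dim H)\cdot|\Lambda|}$. For the unitaries, let $U_{j,t}\in\mathcal{U}(H)$ spatially implement the automorphism $\mu_{\pi(s_j)t}\circ\beta_{j,t}\circ\mu_t^{-1}$ of $B_0$ and put $V_j=\sum_{t}U_{j,t}\otimes e_{\pi(s_j)t,\,t}$, which is unitary because $t\mapsto\pi(s_j)t$ is a bijection of $\Lambda$; these $V_j$ both permute the summands according to the genuine $\Lambda$-action and carry out the straightened local identifications.

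Finally I would verify the estimates. Conditions (a) and (b) are immediate from $\varphi_t|_{B_t}=\id$, $\Omega\subset_{\delta}D\subset B_t$, and the isometry of each $\rho_t$, giving multiplicativity and norm defects of order $\delta$. For (c) a direct computation gives, block by block, $V_j\Phi(a)V_j^{*}=\sum_{t''}\rho_{t''}\big(\beta_{j,t}(\varphi_{t}(a))\big)\otimes e_{t'',t''}$ with $t=\pi(s_j)^{-1}t''$, so after comparing with the $t''$-block $\rho_{t''}(\varphi_{t''}(\alpha_{s_j}(a)))$ of $\Phi(\alpha_{s_j}(a))$ it suffices to see $\beta_{j,t}(\varphi_t(a))\approx\varphi_{t''}(\alpha_{s_j}(a))$ in $B_{t''}$; this follows by approximating $a$ by $a_0\in D\subset B_t$, using $\varphi_t|_{B_t}=\id$, the closeness $\beta_{j,t}\approx\alpha_{s_j}$, and the pseudo-orbit estimate $\alpha_{s_j}(a_0)\in\alpha_{s_j}(D)\subset_{\delta_1}B_{t''}$ together with the fact that $\varphi_{t''}$ is near the identity on elements close to $B_{t''}$. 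Choosing $\delta$ and $\delta_1$ small compared with $\varepsilon$ and $\max_{a\in\Omega}\|a\|$ makes all defects $<\varepsilon$, and extending $s_j\mapsto V_j$ freely completes the proof. The hard part will be this straightening step: converting the merely approximate equivariance $\alpha_{s_j}(B_t)\approx B_{\pi(s_j)t}$ into honest transport unitaries on a single finite-dimensional model---while keeping the ambient representation's multiplicities uniform so that the local automorphisms are spatially implemented---and then controlling all the perturbation errors simultaneously; the genuine permutation action of $\mathbb{F}_r$ on $\Lambda$ and the freeness of $\mathbb{F}_r$ (which trivializes the cocycle condition (d)) are exactly what let the construction close up.
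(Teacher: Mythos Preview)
Your argument is correct and follows the same skeleton as the paper's: invoke the pseudo-orbit property to get the finite quotient $\Lambda$ and the family $\{B_t\}_{t\in\Lambda}$, use Christensen to straighten $\alpha_{s_j}(B_t)$ into $B_{\pi(s_j)t}$ by a unitary close to $1$, build a $\Lambda$-indexed direct sum as the finite-dimensional target, and reach it by an Arveson extension. The difference is purely one of packaging. The paper works directly with the algebra $B=\bigoplus_{t\in\Lambda}B_t$: it takes a \emph{single} Arveson extension of the diagonal inclusion $D\hookrightarrow B$, $a\mapsto (a)_{t\in\Lambda}$, and defines the approximating action on $B$ by $\sigma_{s}((b_t)_t)=(u_{s,t}\alpha_s(b_t)u_{s,t}^*)_t$ (with the implicit reindexing $t\mapsto\pi(s)t$). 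This avoids your extra layer---you do not need to prove the $B_t$ are mutually isomorphic, fix a model $B_0$ with isomorphisms $\mu_t$, choose a representation with matched multiplicities, or build the spatial unitaries $V_j$ by hand. All of that machinery is absorbed into the observation that $\sigma_{s_j}$ is an automorphism of the finite-dimensional algebra $B$, which by the remark after Definition~\ref{defnMFaction} (and the discussion about GNS representations there) is enough. Your version has the virtue of making the implementing unitaries completely explicit; the paper's has the virtue of being shorter and not needing any structural facts about the individual $B_t$ beyond $D\subset B_t$. One small economy worth noting: the paper also chooses $D$ large enough that $\alpha_{s}(\Omega)\subset_\delta D$ for each $s\in F$, which lets the equivariance estimate go through without your final appeal to ``$\varphi_{t''}$ is near the identity on elements close to $B_{t''}$''.
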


\begin{proof}

Let $\varepsilon>0$, $\Omega\subset A$ a finite subset and $F=\{e=s_{0},s_{1},\dots, s_{r}\}$, where $s_{1},\dots,s_{r}$ are the standard generators for $\mathbb{F}_{r}$. Let $C:=\max_{x\in\Omega}\|x\|+1$ and let $\delta$ be so small that $2\delta<\varepsilon$, $4C\delta<\varepsilon$ and $2\delta(2+C)<\varepsilon$. Since $A$ is an AF algebra, we may choose a finite dimensional subalgebra $D\subset A$ with $\alpha_{s}(\Omega)\subset_{\delta}D$ for every $s\in F$.  Let $\delta_{1}=\delta_{1}(\delta)>0$ be a perturbation constant as in Christensen's result above. By our hypothesis, there is a finite quotient $\pi:\mathbb{F}_{r}\rightarrow\Lambda$ and a map $\zeta: \Lambda\rightarrow \mathcal{F}(A)$ with $D\subset B_{t}$ and $d(\alpha_{s}(B),B_{\pi(s)t})<\delta_{1}$ for each $t\in\Lambda$ and $s\in F$. Thus for each pair $(s,t)\in F\times\Lambda$, find unitaries $u_{s,t}\in\mathcal{U}(A)$ with $\|u_{s,t}-1\|<\delta$ and $\Ad_{u_{s,t}}(\alpha_{s}(B_{t}))\subset B_{\pi(s)t}$.

Now set $B=\bigoplus_{t\in\Lambda}B_{t}$ and for each $s\in F$ consider the automorphism of $B$ given by
\[\sigma_{s}((b_{t})_{t\in\Lambda})=(u_{s,t}\alpha_{s}(b_{t})u_{s,t}^{*})_{t\in\Lambda}.\]
We thus have an action $\sigma:\mathbb{F}_{r}\rightarrow\Aut(B)$. Let $\varphi:A\rightarrow B$ be the u.c.p.\ extension of the inclusion $D\hookrightarrow B$ given by $a\mapsto(a)_{t\in\Lambda}$. Fixing an $s\in F$ and $x,y\in\Omega$, we know that there are elements $a,b,c\in D$ with $\|a-x\|<\delta$, $\|c-y\|<\delta$ and $\|d-\alpha_{s}(x)\|<\delta$. Note that $\|\alpha_{s}(a)-d\|\leq \|\alpha_{s}(a)-\alpha_{s}(x)\|+\|\alpha_{s}(x)-d\|<2\delta$.  We may now verify the approximate equivariance:
\begin{align*}\|\sigma_{s}(\varphi(x))-\varphi(\alpha_{s}(x))\|&\leq \|\sigma_{s}(\varphi(x))-\sigma_{s}(\varphi(a))\|
+\|\sigma_{s}(\varphi(a))-\varphi(d)\|+\|\varphi(d)-\varphi(\alpha_{s}(x))\|\\&<2\delta +\|\sigma_{s}(\varphi(a))-\varphi(d)\|=
 2\delta +\|(u_{s,t}\alpha_{s}(a)u_{s,t}^{*})_{t\in\Lambda}-(d)_{t\in\Lambda}\|\\&\leq
 2\delta +\max_{t\in\Lambda}\|u_{s,t}\alpha_{s}(a)u_{s,t}^{*}-d\|\\
 &\leq 2\delta +\max_{t\in\Lambda}\big\{\|u_{s,t}\alpha_{s}(a)u_{s,t}^{*}-\alpha_{s}(a)\|+\|\alpha_{s}(a)-d\| \big\}\\
 &\leq 2\delta +2\|a\|\delta + 2\delta\leq 2\delta(2+C)<\varepsilon.
\end{align*}

As for approximate multiplicativity, a simple estimate gives $\|xy-ab\|<2C\delta$ as well as $\|\varphi(a)\varphi(b)-\varphi(x)\varphi(y)\|<\delta$. Also note that $\varphi$ is multiplicative on $D$, so
\begin{align*}\|\varphi(xy)-\varphi(x)\varphi(y)\|&\leq\|\varphi(xy)-\varphi(ab)\|+\|\varphi(a)\varphi(b)-\varphi(x)\varphi(y)\|\\
&\leq\|xy-ab\|+\|\varphi(a)\varphi(b)-\varphi(x)\varphi(y)\|<4C\delta<\varepsilon.
\end{align*}

Finally, since $\|\varphi(a)\|=\|a\|$, we have
\[\big|\|\varphi(x)\|-\|x\|\big|\leq\big|\|\varphi(x)\|-\|\varphi(a)\|\big|+\big|\|a\|-\|x\|\big|\leq \|\varphi(x)-\varphi(a)\|+\|a-x\|\leq 2\delta<\varepsilon,\]
which concludes the proof.
\end{proof}

As mentioned above, we pay attention to residually finite actions, and quasidiagonal actions in the non-commutative case, for such actions will determine the structure of the resulting reduced crossed product algebras. To employ Theorem~\ref{KerrNowak}, we need a quasidiagonal \cstar-system $(A,\Gamma)$ where $C^*_{\lambda}(\Gamma)$ is MF. A remarkable result of Haagerup and Thorbj{\o}rnsen in~\cite{HT} states that the reduced group \cstar-algebra $\cstar_{\lambda}(\mathbb{F}_{r})$ is MF. However, by Rosenberg's result, $\cstar_{\lambda}(\mathbb{F}_{r})$ is not quasidiagonal when $r\geq2$. Therefore a reduced crossed product where the acting group is a non-abelian free group can never be quasidiagonal. A wonderful result connecting the Brown-Douglas-Fillmore theory of extensions \cite{BDF} to quasidiagonal \cstar-algebras and MF algebras reads as follows. A proof of this result can be found in~\cite{B2}.

\begin{thm} Let $B$ be a unital separable MF algebra which fails to be quasidiagonal. Then $\Ext(B)$ is not a group.
\end{thm}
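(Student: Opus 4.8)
The plan is to exhibit a single non-invertible element of $\Ext(B)$, which forces $\Ext(B)$ to fail to be a group. The engine is the standard characterization coming from the Arveson--Choi--Effros theory of extensions: for a separable unital \cstar-algebra $B$, a unital extension, represented by its Busby map $\tau:B\to\mathbb{B}(H)/\mathbb{K}(H)$, is invertible in $\Ext(B)$ if and only if $\tau$ admits a unital completely positive lift $\Phi:B\to\mathbb{B}(H)$, i.e.\ $q\circ\Phi=\tau$ where $q:\mathbb{B}(H)\to\mathbb{B}(H)/\mathbb{K}(H)$ is the quotient onto the Calkin algebra. Granting this (which I would cite as a black box from extension theory), it suffices to produce one unital $\ast$-monomorphism $\tau:B\to\mathbb{B}(H)/\mathbb{K}(H)$ admitting no such u.c.p.\ lift.

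Since $B$ is MF, fix a natural sequence $\textbf{n}=(n_{k})_{k\geq1}$ and a unital $\ast$-monomorphism $\iota:B\hookrightarrow Q_{\textbf{n}}$. Represent the block-diagonal algebra $\prod_{k}\mathbb{M}_{n_{k}}$ faithfully on $H=\bigoplus_{k}\mathbb{C}^{n_{k}}$ as block-diagonal operators; under this identification $\bigoplus_{k}\mathbb{M}_{n_{k}}=\big(\prod_{k}\mathbb{M}_{n_{k}}\big)\cap\mathbb{K}(H)$, since a block-diagonal operator is compact exactly when its block norms vanish. Hence $q$ restricts to an injective $\ast$-homomorphism $j:Q_{\textbf{n}}\hookrightarrow\mathbb{B}(H)/\mathbb{K}(H)$, and composing gives a unital $\ast$-monomorphism $\tau:=j\circ\iota:B\to\mathbb{B}(H)/\mathbb{K}(H)$, the Busby invariant of an essential unital extension $[\tau]\in\Ext(B)$. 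I claim $[\tau]$ is not invertible.

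The crux is to show that a u.c.p.\ lift of $\tau$ would witness quasidiagonality of $B$, contradicting the hypothesis. Let $E:\mathbb{B}(H)\to\prod_{k}\mathbb{M}_{n_{k}}$ be the block-diagonal compression $E(T)=(P_{k}TP_{k})_{k}$, where $P_{k}$ is the orthogonal projection onto $\mathbb{C}^{n_{k}}$; this is a u.c.p.\ (indeed conditional-expectation) map onto the block-diagonal subalgebra. Suppose, toward a contradiction, that $\Phi:B\to\mathbb{B}(H)$ is a u.c.p.\ lift of $\tau$, and set $\Psi:=E\circ\Phi:B\to\prod_{k}\mathbb{M}_{n_{k}}$. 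For $b\in B$ choose $D_{b}=(d_{k})_{k}\in\prod_{k}\mathbb{M}_{n_{k}}$ with $\iota(b)=\pi(D_{b})$, where $\pi:\prod_{k}\mathbb{M}_{n_{k}}\to Q_{\textbf{n}}$ is the canonical quotient; then $\Phi(b)-D_{b}\in\mathbb{K}(H)$ because $q(\Phi(b))=\tau(b)=j(\pi(D_{b}))=q(D_{b})$. The key estimate is that the diagonal blocks of a compact operator tend to zero in norm, so $\|P_{k}(\Phi(b)-D_{b})P_{k}\|\to0$; thus $\Psi(b)-D_{b}\in\bigoplus_{k}\mathbb{M}_{n_{k}}$, whence $\pi\circ\Psi=\iota$. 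This makes $\Psi$ a u.c.p.\ lift of the MF embedding $\iota$, and by the definition of quasidiagonality recalled in Section~2 it forces $B$ to be quasidiagonal --- a contradiction. Therefore $\tau$ has no u.c.p.\ lift, $[\tau]$ is non-invertible, and $\Ext(B)$ is not a group.

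I expect the main technical point to be the clean verification that the compression $E\circ\Phi$ genuinely lifts $\iota$: namely that the diagonal-block norms of the compact error $\Phi(b)-D_{b}$ vanish, placing $\Psi(b)-D_{b}$ in $\bigoplus_{k}\mathbb{M}_{n_{k}}$. The only external input is the dichotomy ``invertible $\iff$ u.c.p.-liftable'' for separable unital extensions, which I would invoke as a standard result rather than reprove.
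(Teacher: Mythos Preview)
The paper does not prove this theorem; it simply quotes the result and refers the reader to Brown's survey~\cite{B2} for a proof. Your argument is correct and is precisely the standard proof one finds there: build a Busby map from the MF embedding $B\hookrightarrow Q_{\mathbf n}\hookrightarrow\mathbb{B}(H)/\mathbb{K}(H)$, and show that a u.c.p.\ lift would, after block-diagonal compression, yield a u.c.p.\ lift of the MF embedding and hence quasidiagonality. The only point worth tightening is the line ``diagonal blocks of a compact operator tend to zero'': this uses that the $P_k$ are mutually orthogonal finite-rank projections, so $P_k\to0$ strongly and a finite-rank approximation of the compact error gives $\|P_k K P_k\|\to0$. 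With that made explicit, your proof matches the cited one.
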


We have the following corollaries.

\begin{cor}Let $A$ be a unital AF algebra satisfying $\overline{\Inn(A)}=\Aut(A)$, then $A\rtimes_{\lambda,\alpha}\mathbb{F}_{r}$ is an MF algebra. In particular, if $A$ is AF with $K_{0}(A)$ totally ordered and Archimedean, or if $A$ is UHF, then $A\rtimes_{\lambda,\alpha}\mathbb{F}_{r}$ is always MF. For such algebras $A$ and $r\geq2$ we have that $\Ext(A\rtimes_{\lambda,\alpha}\mathbb{F}_{r})$ is not a group.
\end{cor}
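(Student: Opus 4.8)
The plan is to assemble the statement from the machinery already in place, since each piece has been prepared above. First I would dispatch the MF conclusion. Under the hypothesis $\overline{\Inn(A)}=\Aut(A)$, the Proposition above on free-group actions on AF algebras with all automorphisms approximately inner guarantees that any action $\alpha:\mathbb{F}_{r}\rightarrow\Aut(A)$ is quasidiagonal. On the group side, the Haagerup--Thorbj{\o}rnsen theorem (\cite{HT}) tells us that $\cstar_{\lambda}(\mathbb{F}_{r})$ is MF. With a QD action and an MF reduced group algebra in hand, Theorem~\ref{KerrNowak}(2) applies verbatim and yields that $A\rtimes_{\lambda,\alpha}\mathbb{F}_{r}$ is MF. This settles the first assertion.

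The two special cases require no new argument. As recorded in the proof of the cited Proposition, a unital AF algebra whose $K_{0}$-group is totally ordered and Archimedean satisfies $\overline{\Inn(A)}=\Aut(A)$, and the same holds for UHF algebras (Corollary IV.5.8 in~\cite{Da}). Hence both classes fall under the hypothesis of the first part, and the MF conclusion follows immediately.

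For the final assertion about $\Ext$, I would invoke the theorem above relating the MF property, quasidiagonality, and the Ext semigroup: a unital separable MF algebra that fails to be quasidiagonal has $\Ext$ that is not a group. It then remains to check that $B:=A\rtimes_{\lambda,\alpha}\mathbb{F}_{r}$ meets these hypotheses for $r\geq2$. Unitality is clear since $A$ is unital, and separability follows from separability of $A$ together with countability of $\mathbb{F}_{r}$; the MF property was just established. The crux is non-quasidiagonality. Here I would use that $\cstar_{\lambda}(\mathbb{F}_{r})$ embeds canonically as a unital \cstar-subalgebra of $B$, and that quasidiagonality passes to subalgebras. Were $B$ quasidiagonal, so too would be $\cstar_{\lambda}(\mathbb{F}_{r})$; but for $r\geq2$ the group $\mathbb{F}_{r}$ is non-amenable, so Rosenberg's theorem (\cite{HadR}) forbids $\cstar_{\lambda}(\mathbb{F}_{r})$ from being quasidiagonal. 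This contradiction shows $B$ is not QD, and the Ext theorem then gives that $\Ext(A\rtimes_{\lambda,\alpha}\mathbb{F}_{r})$ is not a group.

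The argument is essentially an orchestration of deep inputs rather than a computation, so there is no single technical obstacle of the usual estimate-chasing kind. If anything, the only point demanding care is the non-quasidiagonality step, which leans on Rosenberg's characterization of amenability via quasidiagonality of the reduced group algebra together with the hereditary behavior of QD under passage to subalgebras; every other ingredient is quotable directly from the results preceding this corollary.
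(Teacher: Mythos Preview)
Your proposal is correct and matches the paper's intended argument exactly: the corollary is stated without its own proof precisely because it is meant to be assembled from the preceding Proposition (QD action), the Haagerup--Thorbj{\o}rnsen theorem, Theorem~\ref{KerrNowak}(2), Rosenberg's result on non-QD of $\cstar_{\lambda}(\mathbb{F}_{r})$ for $r\geq2$, and the quoted Ext theorem, just as you have done.
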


\begin{cor}Let $A$ be an AF-algebra, $r\in\mathbb{N}$ and $\alpha:\mathbb{F}_{r}\rightarrow\Aut(A)$ an action with the pseudo-orbit property. Then $A\rtimes_{\lambda,\alpha}\mathbb{F}_{r}$ is an MF algebra. If $r\geq2$ then $\Ext(A\rtimes_{\lambda,\alpha}\mathbb{F}_{r})$ is not a group.
\end{cor}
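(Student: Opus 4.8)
The plan is to assemble the corollary from the results already in hand; the genuinely substantive work lies in the preceding Proposition. First I would invoke that Proposition to conclude that the pseudo-orbit property forces the action $\alpha:\mathbb{F}_{r}\rightarrow\Aut(A)$ to be quasidiagonal. Next, the deep theorem of Haagerup and Thorbj{\o}rnsen (\cite{HT}) guarantees that $\cstar_{\lambda}(\mathbb{F}_{r})$ is MF. With a QD action on the AF algebra $A$ and an MF reduced group \cstar-algebra $\cstar_{\lambda}(\mathbb{F}_{r})$, the two hypotheses of Theorem~\ref{KerrNowak}(2) are exactly met, and that theorem delivers at once that $A\rtimes_{\lambda,\alpha}\mathbb{F}_{r}$ is MF.

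For the statement about the extension semigroup, the strategy is to exhibit the crossed product as an MF algebra that fails to be quasidiagonal, and then invoke the Brown--Douglas--Fillmore--type theorem stated above. The crossed product $A\rtimes_{\lambda,\alpha}\mathbb{F}_{r}$ is unital and separable, being built from the separable unital algebra $A$ and the countable group $\mathbb{F}_{r}$, and $\cstar_{\lambda}(\mathbb{F}_{r})$ sits inside it canonically as a unital \cstar-subalgebra via the unitaries $s\mapsto u_{s}$. When $r\geq 2$ the group $\mathbb{F}_{r}$ is non-amenable, so by Rosenberg's theorem (\cite{HadR}) the algebra $\cstar_{\lambda}(\mathbb{F}_{r})$ is not quasidiagonal. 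Since quasidiagonality passes to \cstar-subalgebras, $A\rtimes_{\lambda,\alpha}\mathbb{F}_{r}$ cannot be quasidiagonal either. It is therefore a unital separable MF algebra that is not QD, and the cited theorem (see \cite{B2}) yields that $\Ext(A\rtimes_{\lambda,\alpha}\mathbb{F}_{r})$ is not a group.

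Because every step is a direct application of an already-established result, I do not anticipate any real obstacle here. The only points requiring care are bookkeeping in nature: confirming unitality and separability of the crossed product so that the BDF--type theorem applies, and verifying that the canonical copy of $\cstar_{\lambda}(\mathbb{F}_{r})$ is a genuine unital \cstar-subalgebra to which quasidiagonality descends. All of the analytic difficulty has already been absorbed, on the one hand into the preceding Proposition (pseudo-orbit property $\Rightarrow$ QD action), and on the other into the external inputs of Haagerup--Thorbj{\o}rnsen and Rosenberg together with Theorem~\ref{KerrNowak}.
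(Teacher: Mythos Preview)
Your proposal is correct and follows exactly the line of reasoning the paper intends: the corollary is stated in the paper without proof precisely because it is an immediate combination of the preceding Proposition (pseudo-orbit property $\Rightarrow$ QD action), Haagerup--Thorbj{\o}rnsen, Theorem~\ref{KerrNowak}(2), Rosenberg's theorem, and the BDF-type result just stated. There is nothing to add.
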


For actions on nuclear algebras, we aim to show that QD and MF actions coincide. Before embarking on the details, a bit of notation is in order. Given a (separable) MF algebra $B$, by an \emph{MF approximating sequence for $B$} we mean a sequence of $\ast$-linear, unital maps $(\psi_{n}:B\rightarrow \mathbb{M}_{k_{n}})_{n\geq1}$ which are asymptotically multiplicative and asymptotically isometric. If the $\psi_{n}$ are completely positive, then $B$ is a quasidiagonal algebra and the sequence $(\psi_{n})_{n\geq1}$ will be referred to as a \emph{QD approximating sequence}.

\begin{lem}\label{approx seq} Let $A$ be a unital, separable, nuclear MF algebra. Suppose $(\psi_{n}:A\rightarrow \mathbb{M}_{k_{n}})_{n\geq1}$ is an MF approximating sequence for $A$. Then there exists a QD approximating sequence $(\varphi_{n}:A\rightarrow \mathbb{M}_{k_{n}})_{n\geq1}$ for $A$ satisfying
\[\|\varphi_{n}(a)-\psi_{n}(a)\|\longrightarrow 0 \qquad \forall a\in A.\]
\end{lem}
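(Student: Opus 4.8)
The goal is to upgrade an arbitrary MF approximating sequence $(\psi_n)$ into a genuinely completely positive (QD) approximating sequence $(\varphi_n)$ that is pointwise asymptotically equal to $(\psi_n)$. The key input is that $A$ is nuclear \emph{and} MF, hence quasidiagonal (an MF algebra is quasidiagonal when it is nuclear, by the Choi--Effros-type lifting available in the nuclear case). The plan is therefore not to build a QD sequence from scratch, but to show that any \emph{existing} QD approximating sequence can be adjusted so as to track the given MF sequence pointwise. First I would fix, by quasidiagonality of $A$, some QD approximating sequence $(\varrho_m : A \to \mathbb{M}_{\ell_m})$ consisting of u.c.p.\ asymptotically multiplicative, asymptotically isometric maps. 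The main task is to reconcile the two sequences, which a priori land in matrix algebras of unrelated sizes $k_n$ and $\ell_m$.

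The technical heart is a comparison lemma at the level of a single pair of approximations. Both $\psi_n$ and a suitable $\varrho_m$ are unital, $*$-linear, and approximately multiplicative and isometric on a large finite set $\Omega \subset A$; in the matrix-algebra setting two such maps agreeing approximately on the relevant algebraic structure should be approximately unitarily equivalent after amplification. Concretely, I would pass to the matrix amplifications (replacing $\mathbb{M}_{k_n}$ and $\mathbb{M}_{\ell_m}$ by a common larger matrix algebra $\mathbb{M}_{k_n \ell_m}$ via $x \mapsto x \otimes 1$) so that both approximations take values in the same algebra, and then invoke a uniqueness/Voiculescu-type argument: two almost-multiplicative almost-isometric unital $*$-linear maps of $A$ into a matrix algebra, which induce close states and generate close approximate representations, are close to being conjugate by a unitary $u_n \in \mathcal{U}(\mathbb{M}_{k_n\ell_m})$. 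I would then set $\varphi_n = \mathrm{Ad}_{u_n} \circ (\varrho_{m(n)} \otimes 1)$ for an appropriately chosen index $m(n)$. Since each $\varrho_{m(n)}$ is u.c.p.\ and conjugation by a unitary and amplification preserve complete positivity, $\varphi_n$ is automatically u.c.p., so $(\varphi_n)$ is a QD approximating sequence.

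To ensure the pointwise convergence $\|\varphi_n(a) - \psi_n(a)\| \to 0$, I would run a standard exhaustion/diagonal argument. Enumerate a dense sequence in $A$ giving an increasing chain of finite sets $\Omega_1 \subset \Omega_2 \subset \cdots$ with tolerances $\varepsilon_n \to 0$. For each $n$, using the comparison lemma, choose $m(n)$ large and a unitary $u_n$ so that the amplified, conjugated QD approximation agrees with $\psi_n$ to within $\varepsilon_n$ on $\Omega_n$. Because both $(\psi_n)$ and $(\varphi_n)$ are asymptotically isometric and asymptotically multiplicative, and the dense sets exhaust $A$, the pointwise discrepancy $\|\varphi_n(a) - \psi_n(a)\|$ is controlled by $\varepsilon_n$ on $\Omega_n$ and by density elsewhere, giving convergence to $0$ for every $a \in A$.

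The step I expect to be the main obstacle is the comparison lemma asserting approximate unitary equivalence of two almost-multiplicative almost-isometric unital $*$-linear maps into matrix algebras. This is where nuclearity of $A$ is essential: it is precisely nuclearity that lets one treat approximate $*$-homomorphisms into matrix algebras as if they were honest representations (via c.p.\ lifting, Arveson extension, and a Stinespring-type dilation) and then apply a finite-dimensional version of Voiculescu's theorem to conjugate one into the other. Making the amplification sizes match cleanly and keeping careful track of all the $\varepsilon$'s through the dilation and conjugation is the delicate bookkeeping, but conceptually the only nontrivial ingredient is this approximate-uniqueness statement; once it is in hand, complete positivity of $\varphi_n$ and the diagonal convergence argument are routine.
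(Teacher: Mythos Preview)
Your approach has a genuine gap and also misses a much simpler route. The lemma requires the new maps $\varphi_n$ to land in the \emph{same} matrix algebras $\mathbb{M}_{k_n}$ as the given $\psi_n$, but your construction produces maps into $\mathbb{M}_{k_n\ell_{m(n)}}$ after amplification. That already fails to match the statement.

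More seriously, the ``comparison lemma'' you rely on is false as stated. Two unital, almost-multiplicative, almost-isometric $*$-linear maps of $A$ into a matrix algebra need not be approximately unitarily equivalent, even after equal-size amplification: there is a rank obstruction. For a single nonzero projection $p\in A$, the normalized trace of $\psi_n(p)\otimes 1_{\ell_m}$ in $\mathbb{M}_{k_n\ell_m}$ is $\operatorname{tr}_{k_n}(\psi_n(p))$, while that of $\varrho_m(p)\otimes 1_{k_n}$ is $\operatorname{tr}_{\ell_m}(\varrho_m(p))$; there is no reason for these to be close, and if they differ the two maps cannot be approximately unitarily conjugate. Nuclearity does not rescue this, and Voiculescu-type uniqueness theorems require matching of the relevant rank/trace data, which you have not arranged.

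The paper's argument bypasses all of this with a single application of the Choi--Effros lifting theorem. The given MF sequence assembles into an embedding $\Psi:A\hookrightarrow \prod_n\mathbb{M}_{k_n}\big/\bigoplus_n\mathbb{M}_{k_n}$ via $\Psi(a)=\pi((\psi_n(a))_n)$. Nuclearity of $A$ yields a u.c.p.\ lift $\Phi:A\to\prod_n\mathbb{M}_{k_n}$ with $\pi\circ\Phi=\Psi$; setting $\varphi_n=\pi_n\circ\Phi$ (coordinate projections) gives u.c.p.\ maps into exactly $\mathbb{M}_{k_n}$, and $\pi\circ\Phi=\Psi$ says precisely that $(\varphi_n(a)-\psi_n(a))_n\in\bigoplus_n\mathbb{M}_{k_n}$, i.e.\ $\|\varphi_n(a)-\psi_n(a)\|\to 0$. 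The approximating properties of $(\varphi_n)$ then follow immediately from those of $(\psi_n)$. No uniqueness/conjugation argument is needed.
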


\begin{proof} If $\pi:\prod_{n=1}^{\infty}\mathbb{M}_{k_{n}}\rightarrow\frac{\prod_{n=1}^{\infty}\mathbb{M}_{k_{n}}}{\bigoplus_{n=1}^{\infty}\mathbb{M}_{k_{n}}}$
 denotes the canonical quotient mapping, the MF approximating sequence $(\psi_{n})_{n\geq1}$ provides an embedding \[\Psi:A\hookrightarrow\frac{\prod_{n=1}^{\infty}\mathbb{M}_{k_{n}}}{\bigoplus_{n=1}^{\infty}\mathbb{M}_{k_{n}}},\]
namely $\Psi(a):=\pi((\psi_{n}(a))_{n\geq1})$. Now nuclearity of $A$ ensures a u.c.p. lifting
\[\Phi:A\rightarrow \prod_{n=1}^{\infty}\mathbb{M}_{k_{n}}\]
with $\pi\circ\Phi=\Psi$. Set for each $n$, $\varphi_{n}:=\pi_{n}\circ\Phi:A\rightarrow\mathbb{M}_{k_{n}}$, where $\pi_{n}:\prod_{m=1}^{\infty}\mathbb{M}_{k_{m}}\rightarrow\mathbb{M}_{k_{n}}$ is the natural projection mapping. The maps $\varphi_{n}$ are clearly u.c.p, and note that for each $a$ in $A$,
\[\pi((\psi_{n}(a))_{n})=\Psi(a)=\pi\circ\Phi(a)=\pi((\varphi_{n}(a))_{n}),\]
which means that $(\varphi_{n}(a)-\psi_{n}(a))_{n\geq1}\in \bigoplus_{n=1}^{\infty}\mathbb{M}_{k_{n}}$ for each $a$, that is
\[\|\varphi_{n}(a)-\psi_{n}(a)\|\longrightarrow 0 \qquad \forall a\in A.\]
From this, the approximating properties of the sequence $(\varphi_{n})_{n\geq1}$ follow from those of $(\psi_{n})_{n\geq1}$. Indeed, for each $a,b\in A$
\begin{align*}\|\varphi_{n}(ab)-\varphi_{n}(a)\varphi_{n}(b)\|\leq \|\varphi_{n}(ab)&-\psi_{n}(ab)\|+\|\psi_{n}(ab)-\psi_{n}(a)\psi_{n}(b)\|\\&+\|\psi_{n}(a)\psi_{n}(b)-\varphi_{n}(a)\varphi_{n}(b)\|
\end{align*}
with each term tending to zero as $n\rightarrow\infty$. Note that one needs a standard $\varepsilon/3$ argument to show that the last term tends to zero. Also
\[\big|\|\varphi_{n}(a)\|-\|a\|\big|\leq\|\varphi_{n}(a)-\psi_{n}(a)\|+\big|\|\psi_{n}(a)\|-\|a\|\big|\stackrel{n\rightarrow\infty}{\longrightarrow}0\]
for every $a\in A$.
\end{proof}

\begin{prop}\label{QDactionequalsMFaction} Let $(A,\Gamma,\alpha)$ be a \cstar-dynamical system with $A$ nuclear and separable. Then $\alpha$ is a quasidiagonal action if and only if it is an MF action.
\end{prop}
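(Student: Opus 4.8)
One direction is free: every QD action is an MF action by the remarks following Definition~\ref{defnMFaction}, so the whole content is the forward implication. The plan is to start with an MF action $\alpha:\Gamma\rightarrow\Aut(A)$, encode it into a single $*$-monomorphism into a quotient $Q_{\mathbf n}=\prod_k\mathbb{M}_{k_n}/\bigoplus_k\mathbb{M}_{k_n}$ together with an \emph{inner} action of $\Gamma$ on $Q_{\mathbf n}$ that intertwines $\alpha$, and then to use nuclearity of $A$ to produce a u.c.p.\ lift whose components give a QD approximating sequence that is, moreover, approximately equivariant. This is the direct analogue of the passage already carried out in the proof of Theorem~\ref{KerrNowak}(3), now refined by Lemma~\ref{approx seq}.

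Concretely, the MF-action hypothesis gives, via a standard diagonal/exhaustion argument over increasing finite sets $\Omega_n\subset A$ and $F_n\subset\Gamma$ and shrinking $\varepsilon_n\to 0$, a sequence of unital $*$-linear maps $\psi_n:A\rightarrow\mathbb{M}_{k_n}$ and maps $v_n:\Gamma\rightarrow\mathcal{U}(\mathbb{M}_{k_n})$ satisfying the four asymptotic conditions (i)--(iv) exactly as in the proof of Theorem~\ref{KerrNowak}(3). First I would assemble these into $\Psi:A\hookrightarrow Q_{\mathbf n}$, $\Psi(a)=\pi((\psi_n(a))_n)$, which is a $*$-monomorphism by (i)--(ii), and into a unitary representation $v:\Gamma\rightarrow\mathcal{U}(Q_{\mathbf n})$, $v(s)=\pi((v_n(s))_n)$, which is a genuine homomorphism by (iv); condition (iii) then says that the inner action $\beta_s=\Ad_{v(s)}$ on $Q_{\mathbf n}$ satisfies $\beta_s\circ\Psi=\Psi\circ\alpha_s$. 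Thus $(\Psi,v)$ realizes $(A,\Gamma,\alpha)$ equivariantly inside an inner $\Gamma$-action on the MF algebra $Q_{\mathbf n}$.

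Now invoke nuclearity: since $A$ is nuclear and separable and $\Psi:A\rightarrow Q_{\mathbf n}$ is a $*$-homomorphism into a quotient of $\prod_k\mathbb{M}_{k_n}$, the Choi--Effros lifting theorem supplies a u.c.p.\ lift $\Phi:A\rightarrow\prod_k\mathbb{M}_{k_n}$ with $\pi\circ\Phi=\Psi$, exactly as in Lemma~\ref{approx seq}. Setting $\varphi_n=\pi_n\circ\Phi$ gives u.c.p.\ maps with $\|\varphi_n(a)-\psi_n(a)\|\to 0$ for all $a$, whence $(\varphi_n)$ is automatically a QD approximating sequence (asymptotically multiplicative and isometric) by the estimates of Lemma~\ref{approx seq}. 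The only thing left to check is that $(\varphi_n,v_n)$ still witnesses the \emph{action} being QD, i.e.\ the approximate equivariance (c) of Definition~\ref{defnMFaction}(1) survives after replacing $\psi_n$ by $\varphi_n$; but this follows immediately because
\[
\|\varphi_n(\alpha_s(a))-\Ad_{v_n(s)}(\varphi_n(a))\|
\le \|\varphi_n(\alpha_s(a))-\psi_n(\alpha_s(a))\|
+\|\psi_n(\alpha_s(a))-\Ad_{v_n(s)}(\psi_n(a))\|
+\|\Ad_{v_n(s)}(\psi_n(a)-\varphi_n(a))\|,
\]
and all three terms tend to $0$ (the middle by (iii), the outer two since $\Ad_{v_n(s)}$ is isometric and $\|\varphi_n-\psi_n\|\to0$ pointwise). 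Passing back to the local $\varepsilon$-$\Omega$-$F$ formulation then yields a u.c.p.\ $\varphi$ verifying (a)--(d), so $\alpha$ is QD.

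The one genuine subtlety—really the crux—is the interchange of the two limiting processes: the Choi--Effros lift is extracted from the single embedding $\Psi$ and therefore only controls $\|\varphi_n(a)-\psi_n(a)\|$ \emph{pointwise} in $a$, not uniformly over the exhausting families $(\Omega_n)$, and likewise the equivariance defect in (iii) is only asymptotic along $n$. I would handle this by fixing the target tolerances first (choose $\varepsilon$, $\Omega$, $F$) and only then selecting $n$ large enough that all the finitely many estimates above drop below $\varepsilon$ simultaneously; since each of the finitely many sequences $\|\varphi_n(x)-\psi_n(x)\|$, the multiplicativity defects, the norm defects, and the equivariance defects in (iii) tends to $0$, such an $n$ exists. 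The rest is the bookkeeping already modeled in Lemmas~\ref{approx seq} and the proof of Theorem~\ref{KerrNowak}, so no new ideas are required beyond marrying the equivariant embedding to the Choi--Effros lift.
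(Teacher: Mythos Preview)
Your proposal is correct and follows essentially the same route as the paper: invoke Lemma~\ref{approx seq} (i.e., the Choi--Effros lifting via nuclearity) to replace the $*$-linear maps $\psi_n$ by u.c.p.\ maps $\varphi_n$ with $\|\varphi_n(a)-\psi_n(a)\|\to 0$, and then recover approximate equivariance by the same three-term triangle inequality. The paper's proof is terser and does not explicitly package the data as an equivariant embedding into $Q_{\mathbf n}$, but the substance is identical.
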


\begin{proof} That QD implies MF is obvious. Assume that $\alpha$ is MF. We then have an MF approximating sequence $(\psi_{n}:A\rightarrow \mathbb{M}_{k_{n}})_{n\geq1}$ as well as a sequence of actions $\gamma_{n}\curvearrowright\mathbb{M}_{k_{n}}$ with
\[\|\gamma_{n,s}(\psi_{n}(a))-\psi_{n}(\alpha_{s}(a))\|\stackrel{n\rightarrow\infty}{\longrightarrow} 0\qquad \forall a\in A, \forall s\in\Gamma.\]

Use the above Lemma~\ref{approx seq} to generate a QD approximating sequence $(\phi_{n}:A\rightarrow \mathbb{M}_{k_{n}})_{n\geq1}$ with $\|\varphi_{n}(a)-\psi_{n}(a)\|\rightarrow 0$ for every $a\in A$. For a fixed $a\in A$ and $s\in\Gamma$, a simple estimate now gives
\begin{align*}\|\gamma_{n,s}(\phi_{n}(a))-\phi_{n}(\alpha_{s}(a))\|\leq
\|\gamma_{n,s}(\phi_{n}(a))&-\gamma_{n,s}(\psi_{n}(a))\|+\|\gamma_{n,s}(\psi_{n}(a))-\psi_{n}(\alpha_{s}(a))\|\\&+\|\psi_{n}(\alpha_{s}(a))-\phi_{n}(\alpha_{s}(a))\|
\end{align*}
which tends to zero as $n\rightarrow\infty$ since $\|\gamma_{n,s}(\phi_{n}(a))-\gamma_{n,s}(\psi_{n}(a))\|\leq\|\varphi_{n}(a)-\psi_{n}(a)\|$ which goes to zero. The action is thus QD.
\end{proof}

We mention one more example taken from~\cite{HadS} which stems from~\cite{PV}.

\begin{example} Consider an action of the integers $\alpha:\mathbb{Z}\rightarrow\Aut(A)$ which admits an \emph{almost periodic} condition: there is a natural sequence $(n_{k})_{k\geq1}$ for which $(\alpha_{n_{k}})_{k}\rightarrow\id_{A}$ in $\Aut(A)$ as $k\rightarrow\infty$. Call such an action (AP). Pimsner and Voiculescu showed (see~\cite{PV}) that if $A$ is separable, unital, and quasidiagonal and $\mathbb{Z}\curvearrowright A$ satisfies (AP), then $A\rtimes_{\lambda}\mathbb{Z}$ is also quasidiagonal. Hadwin and J. Shen proved an analogous result in the context of MF algebras. In Theorem 4.2 of~\cite{HadS}, they prove that if $A$ is MF, unital and finitely generated $\mathbb{Z}\curvearrowright A$ satisfies condition (AP), then $A\rtimes_{\lambda}\mathbb{Z}$ is again MF. From their work and applying Proposition~\ref{MF cross implies MF act}, we conclude that (AP) actions of the integers on unital QD algebras are QD, and (AP) actions of the integers on unital finitely generated MF algebras are MF. We mention that this notion of an almost periodic action was generalized to actions of amenable countable residually finite discrete groups by Orfanos \cite{Or} where he extended Pimsner and Voiculescu's result.
\end{example}

The results obtained thus far have a concise formulation when the underlying algebra is nuclear.

\begin{thm}\label{Summarypart1} Let $A$ be a unital separable nuclear \cstar-algebra, $\Gamma$ a countable discrete group and $\alpha:\Gamma\rightarrow\Aut(A)$  an action. The following hold.
\begin{enumerate}
\item $A\rtimes_{\lambda,\alpha}\Gamma$ is MF if and only if $\cstar_{\lambda}(\Gamma)$ is MF and $\alpha$ is MF.
\item $A\rtimes_{\lambda,\alpha}\Gamma$ is QD if and only if $\cstar_{\lambda}(\Gamma)$ is QD and $\alpha$ is QD.
\item $A\rtimes_{\lambda,\alpha}\Gamma$ is RFD if and only if $\cstar_{\lambda}(\Gamma)$ is RFD and $\alpha$ is RFD.
\end{enumerate}
When $A=C(X)$ is abelian, $\alpha$ is RFD if and only if the induced action $\Gamma\curvearrowright X$ is RFD. Moreover, if $\Gamma=\mathbb{F}_{r}$ and $X$ is a zero-dimensional metrizable space, then $\alpha$ is QD if and only if the induced action $\Gamma\curvearrowright X$ is residually finite.
\end{thm}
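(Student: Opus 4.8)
The plan is to assemble the theorem entirely from the results already in hand, with nuclearity serving as the bridge between the MF and QD worlds. Each of the three biconditionals splits into a forward implication (the crossed product has the property $\Rightarrow$ both $\cstar_{\lambda}(\Gamma)$ and $\alpha$ have it) and a reverse implication, and I would dispatch all three forward implications at once: they are precisely parts (1), (2), (3) of Proposition~\ref{MF cross implies MF act}, which require no nuclearity. In particular Part (3) of the theorem is then already finished, since its reverse implication is verbatim Theorem~\ref{KerrNowak}(1).

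For the reverse implication of Part (1), I would assume $\cstar_{\lambda}(\Gamma)$ is MF and $\alpha$ is MF. Because $A$ is separable and nuclear, Proposition~\ref{QDactionequalsMFaction} upgrades the MF action $\alpha$ to a QD action. Then Theorem~\ref{KerrNowak}(2), applied with $\cstar_{\lambda}(\Gamma)$ MF and $\alpha$ QD, delivers that $A\rtimes_{\lambda,\alpha}\Gamma$ is MF. Note this argument never needs $\Gamma$ to be amenable, which is essential since the motivating case is $\Gamma=\mathbb{F}_{r}$.

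For the reverse implication of Part (2) I would again use Theorem~\ref{KerrNowak}(2) (invoking QD $\Rightarrow$ MF for $\cstar_{\lambda}(\Gamma)$) to conclude that $A\rtimes_{\lambda,\alpha}\Gamma$ is MF. The remaining task, which I expect to be the main obstacle, is upgrading this \emph{MF} crossed product to a \emph{QD} one. Here is the key observation: quasidiagonality of $\cstar_{\lambda}(\Gamma)$ forces $\Gamma$ to be amenable by Rosenberg's theorem (\cite{HadR}), whence $\cstar_{\lambda}(\Gamma)$ is nuclear and $A\rtimes_{\lambda,\alpha}\Gamma=A\rtimes_{\alpha}\Gamma$ is nuclear as well. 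A separable nuclear MF algebra is automatically quasidiagonal: this is exactly the content of Lemma~\ref{approx seq} (through the Choi--Effros lifting theorem), which converts any MF approximating sequence into a QD approximating sequence of u.c.p.\ maps. Applying this to the crossed product yields its quasidiagonality and closes Part (2).

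Finally, the two abelian assertions follow directly. The equivalence ``$\alpha$ RFD $\iff$ $\Gamma\curvearrowright X$ RFD'' is precisely Proposition~\ref{RFDactions}. For the last statement, with $\Gamma=\mathbb{F}_{r}$ and $X$ zero-dimensional, I would invoke the result of Kerr and Nowak quoted above, namely that $h$ is residually finite if and only if the induced action on $C(X)$ is quasidiagonal (one direction being Proposition~\ref{RFactionimpliesQDaction}); since $C(X)$ is nuclear, Proposition~\ref{QDactionequalsMFaction} shows the MF and QD descriptions of $\alpha$ coincide, so the stated equivalence holds as written.
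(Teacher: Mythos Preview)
Your proposal is correct and follows essentially the same route as the paper's own proof: the forward implications come from Proposition~\ref{MF cross implies MF act}, the reverse of (1) combines Proposition~\ref{QDactionequalsMFaction} with Theorem~\ref{KerrNowak}(2), the reverse of (2) uses Rosenberg's theorem to obtain nuclearity of the crossed product and then the ``nuclear MF $\Rightarrow$ QD'' argument, (3) is Theorem~\ref{KerrNowak}(1), and the abelian statements are Proposition~\ref{RFDactions} together with the Kerr--Nowak equivalence and Proposition~\ref{RFactionimpliesQDaction}. Your write-up is in fact more explicit than the paper's about how the pieces assemble, but the ingredients and logic are identical.
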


\begin{proof} (1): This follows from Theorem~\ref{KerrNowak}, Proposition~\ref{QDactionequalsMFaction} and Proposition~\ref{MF cross implies MF act}. Recall that being MF passes to subalgebras.

(2): If $\alpha$  is QD and $\cstar_{\lambda}(\Gamma)$ is QD then $\Gamma$ is amenable by Rosenberg's result and $A\rtimes_{\lambda,\alpha}\Gamma$ is MF by Theorem~\ref{KerrNowak}. Since $A$ is nuclear and $\Gamma$ is amenable, then $A\rtimes_{\lambda,\alpha}\Gamma$ is nuclear. Now recall that nuclear and MF implies QD. The converse is again Proposition~\ref{MF cross implies MF act}.

(3): This is Proposition~\ref{KerrNowak}.

A residually finite action $\Gamma\curvearrowright X$ by any discrete group on any compact metric space always induces a quasidiagonal action on $C(X)$ as shown in Proposition~\ref{RFactionimpliesQDaction}. Moreover, it is shown in~\cite{KN} that if the reduced crossed product $C(X)\rtimes_{\lambda,\alpha}\mathbb{F}_{r}$ is MF, the induced action $\mathbb{F}_{r}\curvearrowright X$ is residually finite, provided that $X$ is a zero-dimensional compact space.
\end{proof}

\section{K-Theoretical Dynamics}

In this section our aim is to model classical and noncommutative \cstar-dynamics $K$-theoretically. In the presence of sufficiently many projections, the properties of residually finite, RFD, and MF actions admit simple $K$-theoretic characterizations that will aid us to prove structure theorems for the resulting reduced crossed products. Proposition~\ref{K-RFD} below shows how RFD systems $(A,\Gamma)$ admit $\Gamma$-invariant, integer-valued states on $K_{0}(A)$. Quasidiagonal actions are likewise described via local, $\Gamma$-invariant faithful states. For an action of a free group on a AF algebra this characterization leads to the coboundary condition $H_{\sigma}\cap K_{0}(A)^{+}=\{0\}$ from which the main result Theorem~\ref{Main2} ensues.

\subsection{Commutative Case}

We first restrict our attention to transformation groups $h:\Gamma\curvearrowright X$ where $X$ is zero-dimensional. As usual we shall denote by $\alpha$ the corresponding action on $A=C(X)$ and by $\hat\alpha$ the induced action on $K_{0}(A)$. Introducing some further notation for this result, if $A$ is any \cstar-algebra, write
\[\Sigma(A)=\{[p]_{0}: p \in \mathcal{P}(A)\}\]
for the scale of $A$, and given subsets $F\subset\Gamma$, and $S\subset K_{0}(A)$ write
\[S_{F}=\{\hat\alpha_{t}(x):\ x\in S,\  t\in F\cup\{e\}\ \}\]
for the subset of $K_{0}(A)$ containing $S$ and all $F$-iterates of $S$.

\begin{prop} Let $X$ be a zero-dimensional compact metrizable space, and $\Gamma$ a discrete group. Suppose $h:\Gamma\curvearrowright X$ is a continuous action with induced action $\alpha:\Gamma \rightarrow\Aut(C(X))$. Then the following statements are equivalent.

\begin{enumerate}
\item $\Gamma\curvearrowright X$ is residually finite.

\item Given finite subsets $S\subset K_{0}(C(X))^{+}$ and $F \subset \Gamma$ there exist $d$ in $\mathbb{N}$, an action $\sigma:\Gamma \rightarrow\OAut(\mathbb{Z}^{d})$ of ordered abelian groups, and a morphism of ordered abelian groups $\beta:K_{0}(A)\rightarrow \mathbb{Z}^{d}$ such that
\begin{enumerate}
\item $\beta\circ\hat\alpha_{t}(g)=\sigma_{t}\circ\beta(g)$ for each $g \in S$ and $t\in F$,
\item  $\beta(g)\neq 0$ for every $0\neq g\in S$.
\end{enumerate}

\item Given finite subsets $S \subset K_{0}(C(X))^{+}$ and $F \subset \Gamma$ there exist $d$ in $\mathbb{N}$, a subgroup $H\leq K_{0}(C(X))$, along with an action $\sigma:\Gamma \rightarrow\OAut(\mathbb{Z}^{d})$ by ordered abelian group automorphisms, and a positive group homomorphism $\beta:H\rightarrow \mathbb{Z}^{d}$ such that
\begin{enumerate}
\item $[1]\in H$, $S_{F}\subset H$, and $\beta([1])=(1,1,\dots,1)$,
\item $\beta\circ \hat\alpha_{t}(g)=\sigma_{t}\circ\beta(g)$ for each $g \in S$ and $t\in F$,
\item $\beta(g)\neq 0$ for every $0\neq g\in S$.
\end{enumerate}
\end{enumerate}

\end{prop}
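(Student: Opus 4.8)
The plan is to prove the cycle of implications $(1)\Rightarrow(2)\Rightarrow(3)\Rightarrow(1)$, with the last being the substantial one. For $(1)\Rightarrow(2)$ I would manufacture the $K$-theoretic data directly from a residually finite approximation. Since $X$ is zero-dimensional, every $g\in S$ and every iterate $\hat\alpha_{t}(g)$ is a locally constant $\mathbb{Z}$-valued function, and as there are only finitely many of these, compactness yields a single $\delta>0$ such that $d(x,y)<\delta$ forces $g(x)=g(y)$ for all of them. Applying residual finiteness with $\varepsilon=\delta$ and the finite set $F\cup F^{-1}\cup\{e\}$ produces a finite $\Gamma$-set $(E,k)$ and a map $\zeta:E\to X$. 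I then dualize: set $\pi(f)=f\circ\zeta$, a unital $\ast$-homomorphism $C(X)\to C(E)$, put $\beta=K_{0}(\pi):K_{0}(C(X))\to\mathbb{Z}^{|E|}$, and let $\sigma_{t}$ be the coordinate permutation induced by $k_{t}$. Unitality of $\pi$ gives $\beta([1])=(1,\dots,1)$; the intertwining $\beta\circ\hat\alpha_{t}=\sigma_{t}\circ\beta$ on $S$ becomes \emph{exact} precisely because the approximate equivariance $d(\zeta(k_{s}z),h_{s}(\zeta z))<\delta$ lies below the resolution $\delta$ of the functions involved, and $\varepsilon$-density together with the same bound gives $\beta(g)\neq0$ for $0\neq g\in S$. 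The implication $(2)\Rightarrow(3)$ is immediate: take $H=K_{0}(C(X))$, so that $[1]\in H$ and $S_{F}\subset H$ hold trivially, and keep the same $\sigma,\beta$.

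The core is $(3)\Rightarrow(1)$. The guiding observation is that $\OAut(\mathbb{Z}^{d})$ consists exactly of coordinate permutations, since an order automorphism must permute the atoms $e_{1},\dots,e_{d}$ of the cone and therefore fixes the order unit; thus $\sigma$ is simply a permutation action of $\Gamma$ on $E:=\{1,\dots,d\}$, which supplies the finite $\Gamma$-set demanded by the definition of RF. Given $\varepsilon>0$ and finite $F_{0}$, I would first use uniform continuity of the finitely many homeomorphisms $h_{s}$ ($s\in F$, with $F:=F_{0}\cup F_{0}^{-1}\cup\{e\}$) to choose a clopen partition $\mathcal{P}=\{U_{1},\dots,U_{m}\}$ so fine that $\diam(U_{i})<\varepsilon$ \emph{and} $\diam(h_{s}(U_{i}))<\varepsilon$ for all $s\in F$, together with base points $x_{i}\in U_{i}$. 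Feeding $S=\{[\chi_{U_{1}}],\dots,[\chi_{U_{m}}]\}$ and $F$ into $(3)$ produces $d,H,\sigma,\beta$. Writing $\beta_{j}$ for the $j$-th coordinate of $\beta$, each $\beta_{j}$ is positive and order-unit-preserving on $H$, so from $\sum_{i}[\chi_{U_{i}}]=[1]$ I obtain $\sum_{i}\beta_{j}([\chi_{U_{i}}])=1$ with every summand in $\{0,1\}$; hence there is a unique index $i(j)$ with $\beta_{j}([\chi_{U_{i(j)}}])=1$, and I define $\zeta(j)=x_{i(j)}$.

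Two verifications remain, and this is where the argument really bites. For approximate equivariance, fix $s\in F_{0}$ and $j\in E$ and set $j'=k_{s}(j)$; reading the equivariance relation $(3)(b)$ for $g=[\chi_{U_{i(j)}}]$ in the $j'$-coordinate yields $\beta_{j'}([\chi_{h_{s}(U_{i(j)})}])=1$, while by construction $\beta_{j'}([\chi_{U_{i(j')}}])=1$. If the clopen sets $h_{s}(U_{i(j)})$ and $U_{i(j')}$ were disjoint, then $[\chi_{h_{s}(U_{i(j)})}]+[\chi_{U_{i(j')}}]\leq[1]$ in $K_{0}(C(X))$, and applying the positive map $\beta_{j'}$ (legitimate because all the relevant classes, and the complementary positive class, lie in $H$) would force $2\leq1$, a contradiction. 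Thus the two sets meet; as both have diameter below $\varepsilon$, the points $\zeta(j')=x_{i(j')}$ and $h_{s}(\zeta(j))=h_{s}(x_{i(j)})$ lie within $2\varepsilon$ of a common point, giving the required estimate after halving $\varepsilon$ at the outset. For $\varepsilon$-density, faithfulness $(3)(c)$ guarantees $\beta([\chi_{U_{i}}])\neq0$ for each nonempty $U_{i}$, so some $\beta_{j}([\chi_{U_{i}}])=1$, i.e.\ $i(j)=i$; hence every partition piece is realized by some $\zeta(j)$, and since the pieces have diameter $<\varepsilon$ and cover $X$, the range of $\zeta$ is $\varepsilon$-dense.

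I expect the principal obstacle to be exactly this conversion of the purely algebraic intertwining in $(3)$ into the genuine metric approximate-equivariance of $(1)$: the disjointness-to-contradiction step is the crux, and its subtlety is that it must be run using only positivity of $\beta_{j'}$ on the \emph{subgroup} $H$. One cannot invoke multiplicativity, nor an ``evaluation at a point'' description of $\beta_{j'}$, since $H$ may be a proper subgroup on which $\beta_{j'}$ is not a genuine character; positivity and order-unit preservation are all that is available, and the argument is designed to need nothing more. The secondary care required is to arrange $\diam(h_{s}(U_{i}))<\varepsilon$ via uniform continuity, so that the mere overlap of $h_{s}(U_{i(j)})$ with $U_{i(j')}$ genuinely controls the distance $d(h_{s}(\zeta(j)),\zeta(j'))$ rather than leaving it at the mercy of the metric distortion introduced by $h_{s}$.
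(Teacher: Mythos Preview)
Your argument is correct and follows the same cycle $(1)\Rightarrow(2)\Rightarrow(3)\Rightarrow(1)$ as the paper, but the substantial implication $(3)\Rightarrow(1)$ is executed by a genuinely different mechanism. The paper first builds the finite-dimensional subalgebra $B=\cstar(\{\alpha_{s}(p_{j})\})$, observes that the image of $K_{0}(B)$ lands in $H$, and then invokes the lifting Lemma~1.3.4 of R{\o}rdam--St{\o}rmer to realize $\beta\circ\hat\iota$ and each $\sigma_{t}$ as $K_{0}$ of honest unital $\ast$-homomorphisms $\varphi:B\to\mathbb{C}^{d}$ and $\gamma_{t}\in\Aut(\mathbb{C}^{d})$; Gelfand duality then hands back the map $\zeta$ and the $\Gamma$-action on $\{1,\dots,d\}$, and the $K$-theoretic equivariance becomes an exact equality $\varphi(\alpha_{t}(p_{j}))=\gamma_{t}(\varphi(p_{j}))$ of projections in $\mathbb{C}^{d}$, from which the metric conclusions follow by separating points with the $p_{j}$. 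You instead bypass the lifting lemma entirely: the structural observation $\OAut(\mathbb{Z}^{d})\cong S_{d}$ already supplies the finite $\Gamma$-set, and the coordinate functionals $\beta_{j}$ together with the positivity-and-order-unit constraints pick out $i(j)$ and hence $\zeta$ directly; the ``disjointness forces $2\leq 1$'' step replaces the paper's exact-equality-of-projections argument. Your route is more elementary in that it avoids any external lifting result, at the cost of needing the extra refinement $\diam(h_{s}(U_{i}))<\varepsilon$ and the careful check that the complementary class $[1]-[\chi_{h_{s}(U_{i(j)})}]-[\chi_{U_{i(j')}}]$ lies in $H\cap K_{0}(C(X))^{+}$ so that positivity of $\beta_{j'}$ can be applied --- which you correctly flag. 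The paper's route, once the lifting lemma is granted, is a bit slicker and dispenses with that additional diameter hypothesis on the partition.
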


\begin{proof} $(1)\Rightarrow(2)$: First consider $S'=\{[p_{j}]:p_{j}\in \mathcal{P}(A), j=1,\dots n\}\subset \Sigma(C(X))$, a finite subset of the scale of $C(X)$, and $F\subset\Gamma$ a finite subset. Let $0<\varepsilon<1$. Since $h$ is residually finite, by the proof of Proposition~\ref{RFactionimpliesQDaction} in~\cite{KN} there is a unital $\ast$-homomorphism $\varphi:A\rightarrow \mathbb{C}^{d}$ for some $d\in\mathbb{N}$, and an action $\gamma:\Gamma\rightarrow\Aut(\mathbb{C}^{d})$ such that for each $j\in\{1,\dots,n\}$ and $t\in F$
\begin{gather*}\|\varphi(p_{j})\| > \|p_{j}\|-\varepsilon,\\
\|\varphi(\alpha_{t}(p_{j}))-\gamma_{t}(\varphi(p_{j}))\|<\varepsilon.
\end{gather*}
Applying the $K_{0}$ functor yields a positive group homomorphism $\beta:=K_{0}(\varphi):K_{0}(A)\rightarrow K_{0}(\mathbb{C}^{d})\cong \mathbb{Z}^{d}$, with $\beta([1_{A}])=[\varphi(1_{A})]=[1_{\mathbb{C}^{d}}]\cong (1,\dots,1)$. As in the above discussion we also have an induced action $K_{0}(\gamma):\Gamma\rightarrow\OAut(K_{0}(\mathbb{C}^{d}))$. Write $\sigma_{t}=K_{0}(\gamma)(t)=K_{0}(\gamma_{t})$. After composing by a suitable isomorphism of ordered abelian groups, we may assume $\beta$ takes values in $\mathbb{Z}^{d}$, and $\sigma_{t}\in \OAut(\mathbb{Z}^{d})$. We may now verify equivariance: for $t\in F$ and every $j$ we have
\begin{align*}
  \beta\circ \hat\alpha_{t}([p_{j}])&=\hat\varphi\circ \hat\alpha_{t}([p_{j}])=[\varphi\circ\alpha_{t}(p_{j})]\\&=[\gamma_{t}\circ\varphi(p_{j})]
  =\hat\gamma_{t}\circ \hat\varphi([p_{j}])=\sigma_{t}\circ\beta([p_{j}]).
\end{align*}

Suppose $\beta([p_{j}])=0$ for some $j$. Then by definition of $\beta$, $[\varphi(p_{j})]=0$ in $K_{0}(\mathbb{C}^{d})$, which gives $\varphi(p_{j})\sim_{0}0$ and so $\varphi(p_{j})=0$. However, we read above that $\|\varphi(p_{j})\|>\|p_{j}\|-\varepsilon$, which is absurd when $p_{j}\neq 0$.

Since $C(X)$ is AF, the positive cone $K_{0}(C(X))^{+}$ is generated by its scale. Therefore, if $S=\{[q_{i}]\}_{i=1}^{m}\subset K_{0}(C(X))^{+}$, for each $i$ there are elements of the scale $\{[p_{ij}]\}_{j=1}^{n_{i}}$ and positive integers $k_{ij}$ with $[q_{i}]=\sum_{j=1}^{n_{i}}k_{ij}[p_{ij}]$. Set $S'=\{[p_{ij}] : i=1,\dots,m,\quad j=1,\dots, n_{i}\}$ and find $d$, $\beta$, and $\sigma$ as above. Clearly $\beta$ remains equivariant on $S$. Since $\beta$ is faithful on $S'$, it remains faithful on $S$.\newline

$(2)\Rightarrow(3)$: This is obvious; simply take $H=K_{0}(C(X))$.\newline

$(3)\Rightarrow(1)$: Fix a finite set $F\subset\Gamma$ and let $\varepsilon>0$. By compactness and the zero-dimensionality of $X$, we can choose a clopen partition $X=\bigsqcup_{j=1}^{n}Y_{j}$ with $\diam(Y_{j})<\varepsilon/2$. Set $p_{j}=\textbf{1}_{Y_{j}}$ and note that these are orthogonal projections with $\sum_{j}^{n}p_{j}=\textbf{1}_{X}$. Consider now
\[B=\cstar(\{\alpha_{s}(p_{j}):\;s\in F, j=1,\dots,n\})\quad\mbox{and}\quad S=\{[p_{1}],\dots,[p_{n}]\}\subset K_{0}(C(X))^{+}.\]
Apply $(3)$ and obtain  suitable $d$, $H$, $\beta$, and $\sigma$. If $\iota:B\hookrightarrow C(X)$ denotes inclusion, $\hat\iota=K_{0}(\iota):K_{0}(B)\rightarrow K_{0}(C(X))$ is a positive group homomorphism. By hypothesis, the subgroup $H\leq K_{0}(C(X))$ contains all the classes of iterates $\{[\alpha_{s}(p_{j})]: s\in F\cup\{e\}, j=1,\dots, n \}$ and $[1_{A}]$. This guarantees $H$ will contain the image of $\hat\iota$, and so we can therefore compose and define the positive group homomorphism
\[\tau:=\beta\circ\hat\iota:K_{0}(B)\rightarrow \mathbb{Z}^{d}.\]
After composing with a suitable isomorphism of ordered abelian groups, we may assume
\[\tau:K_{0}(B)\rightarrow K_{0}(\mathbb{C}^{d}),\qquad \sigma:\Gamma \rightarrow \OAut(K_{0}(\mathbb{C}^{d})),\]
and these satisfy $\tau([1_{A}])=[1_{\mathbb{C}^{d}}]$, and $\sigma_{t}([1_{A}])=[1_{\mathbb{C}^{d}}]$ for each $t$ in $\Gamma$. By lemma 1.3.4 of~\cite{Ro} there is a unital $\ast$-morphism $\varphi: B\rightarrow \mathbb{C}^{d}$ with $K_{0}(\varphi)=\tau$, and an action $\gamma:\Gamma\rightarrow\Aut(\mathbb{C}^{d})$ with $K_{0}(\gamma_{t})=\sigma_{t}$. We then extend $\varphi$ to all of $C(X)$. The conditions then read as follows: for each $t\in F$ and $j=1,\dots,n$
\begin{align*}
[\varphi(\alpha_{t}(p_{j}))]&=\hat\varphi([\alpha_{t}(p_{j})])=\tau([\alpha_{t}(p_{j})])=\beta\circ \hat\iota([\alpha_{t}(p_{j})])\\&=\beta\circ \hat\alpha_{t}([p_{j}])=\sigma_{t}\circ\beta([p_{j}])=\sigma_{t}\circ\beta\circ \hat\iota([p_{j}])\\
&=\sigma_{t}\circ\tau([p_{j}])=\hat\gamma_{t}\circ \hat\varphi([p_{j}])=[\gamma_{t}(\varphi(p_{j}))].
\end{align*}
This equality holds true in $K_{0}(\mathbb{C}^{d})$ where there is cancellation, whence $\varphi(\alpha_{t}(p_{j}))\sim_{0}\gamma_{t}(\varphi(p_{j}))$, and commutativity then yields the equality $\varphi(\alpha_{t}(p_{j}))=\gamma_{t}(\varphi(p_{j}))$. Moreover, if  $\varphi(p_{j})=0$, it follows that
\[\beta([p_{j}])=\beta\circ \hat\iota([p_{j}])=\tau([p_{j}])=\hat\varphi([p_{j}])=[\varphi(p_{j})]=0,\]
which entails, by the condition on $\beta$, that $[p_{j}]=0$ and thus $p_{j}=0$. Thus $\|\varphi(p_{j})\|=1$ whenever $p_{j}$ is a non-zero projection.

Let $\zeta:\{1,\dots,d\}\rightarrow X$ be the map for which $\varphi(f)=f\circ\zeta$. Moreover, there is an action $\Gamma\curvearrowright\{1,\dots,d\}$ such that $\gamma_{t}(g)(z)=g(t.^{-1}z)$ for every $z\in\{1,\dots,d\}$ and $t\in\Gamma$. The above equivariance of $\varphi$ implies that for each $j=1,\dots,n$, $t\in F$ and $z\in\{1,\dots,d\}$

\begin{align*}p_{j}(\zeta(t^{-1}.z))=\varphi(p_{j})(t^{-1}.z)=\gamma_{t}(\varphi(p_{j}))(z)
&=\varphi(\alpha_{t}(p_{j}))(z)\\&=\alpha_{t}(p_{j})(\zeta(z))=p_{j}(t^{-1}.\zeta(z)).
\end{align*}
This shows that for such $t$ and $z$, $d(t.^{-1}\zeta(z),\zeta(t.^{-1}z))<\varepsilon$, for otherwise $t^{-1}.\zeta(z)$ and $\zeta(t^{-1}.z)$ would be separated by some $p_{j}$ and the above equality would fail. Next, the faithfulness of $\varphi$ means that for each fixed $j$
\[\max_{z\in\{1,\dots,d\}}|p_{j}(\zeta(z))|=\|\varphi(p_{j})\|=1.\]
This proves that $X\subset_{\varepsilon}\zeta(\{1,\dots,d\})$, for if $x\in X$, $x$ belongs to some $Y_{j_{0}}$ and the above equality applied to $p_{j_{0}}$ ensures that there is a $z_{0}$ with $\zeta(z_{0})\in Y_{j_{0}}$ which gives $d(\zeta(z_{0}),x)<\varepsilon$. The action is thus residually finite, completing the proof.

\end{proof}

\subsection{Perturbation Lemmas}
Modeling non-commutative \cstar-dynamics at the K-theoretical level will involve some perturbation results. Recall that two projections determine the same class in $K_{0}$ provided that they are sufficiently close. This allows us some much needed flexibility when applying the $K_{0}$ functor. The next few results are sufficient for our purposes. The first perturbation lemma is quite standard, and may be found in Davidson's book~\cite{Da}, Lemma III.3.2. We therefore state it without proof.

\begin{lem}\label{mtx units pert}Given $\varepsilon>0$ and $n\in\mathbb{N}$, there exists a $\delta=\delta(\varepsilon,n)$ with the following property: Given a unital \cstar-algebra $A$ and \cstar-subalgebras $C, D \subset A$ with $\dim(C)=n$ and system of matrix units $\mathcal{E}$ for $C$ satisfying $\mathcal{E}\subset_{\delta}D$, there is a unitary $u\in A$ with $\|1-u\|<\varepsilon$ such that $uCu^{*}\subset D$.
\end{lem}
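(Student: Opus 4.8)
The plan is to reduce the statement to a standard intertwining argument: I will first replace the approximate matrix units sitting near $D$ by a \emph{genuine} system of matrix units lying inside $D$ and element-wise close to $\mathcal{E}$, and then conjugate $\mathcal{E}$ onto this new system by a unitary close to $1_{A}$. Write $C\cong\bigoplus_{k=1}^{m}\mathbb{M}_{n_{k}}$ with matrix units $\mathcal{E}=\{e^{(k)}_{ij}\}$, so that $\sum_{k}n_{k}^{2}=n$, and for each unit pick $d^{(k)}_{ij}\in D$ with $\|e^{(k)}_{ij}-d^{(k)}_{ij}\|<\delta$. Every estimate below will be an explicit function of $\delta$ and $n$ tending to $0$ as $\delta\to0$, so at the very end one simply chooses $\delta=\delta(\varepsilon,n)$ small enough.

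The construction of the genuine units $\mathcal{F}=\{f^{(k)}_{ij}\}\subset D$ is the technical heart of the argument, and the step I expect to be the main obstacle. First I would symmetrize each diagonal approximant, replacing it by $\tfrac12\big(d^{(k)}_{ii}+(d^{(k)}_{ii})^{*}\big)\in D$, which is self-adjoint and close to the projection $e^{(k)}_{ii}$; its spectrum is therefore concentrated near $\{0,1\}$, so the spectral projection $\chi_{[1/2,\infty)}$ (continuous functional calculus) produces a genuine projection in $D$ within $O(\delta)$ of $e^{(k)}_{ii}$. Since the $e^{(k)}_{ii}$ are mutually orthogonal, these projections form an almost-orthogonal family which can be perturbed, one at a time, to a genuinely orthogonal family $\{p^{(k)}_{ii}\}\subset D$ still close to $\mathcal{E}$. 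For the off-diagonal units I would compress, setting $x^{(k)}_{1j}=p^{(k)}_{11}\,d^{(k)}_{1j}\,p^{(k)}_{jj}$; since $e^{(k)}_{1j}=e^{(k)}_{11}e^{(k)}_{1j}e^{(k)}_{jj}$, the element $x^{(k)}_{1j}$ is close to $e^{(k)}_{1j}$, and $(x^{(k)}_{1j})^{*}x^{(k)}_{1j}$ is a positive element of the corner $p^{(k)}_{jj}Dp^{(k)}_{jj}$ close to its unit, hence invertible there. Polar decomposition then yields partial isometries $f^{(k)}_{1j}\in D$ with the correct source and range projections, and defining the remaining units through the fixed ``spine'' $f^{(k)}_{ij}:=f^{(k)}_{i1}f^{(k)}_{1j}$, where $f^{(k)}_{i1}:=(f^{(k)}_{1i})^{*}$, forces the matrix-unit relations to hold exactly. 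The obstacle is purely quantitative: one must track $\|e^{(k)}_{ij}-f^{(k)}_{ij}\|=O(\delta)$ through the functional calculus, the orthogonalization, and the polar decomposition. This is precisely the standard mechanism by which approximate matrix units are perturbed to exact ones, and it is what forces $\delta$ to depend on $n$ as well as on $\varepsilon$.

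With two element-wise close exact systems $\mathcal{E}$ and $\mathcal{F}$ in hand, the conjugating unitary is routine. Put $p=\sum_{k,i}e^{(k)}_{ii}=1_{C}$ and $q=\sum_{k,i}f^{(k)}_{ii}=1_{C'}$, where $C'=\cstar(\mathcal{F})\subset D$, and form $V=\sum_{k,i}f^{(k)}_{i1}e^{(k)}_{1i}$. A direct computation gives $Ve^{(k)}_{ij}=f^{(k)}_{ij}V$ together with $V=qVp$, while $V^{*}V\in pAp$ and $VV^{*}\in qAq$ are within $O(\delta)$ of $p$ and $q$. Extending by $W=V+(1_{A}-q)(1_{A}-p)$ kills the cross terms and produces an element with $\|1_{A}-W\|=O(\delta)$ still satisfying $We^{(k)}_{ij}=f^{(k)}_{ij}W$ and, by adjoints, $e^{(k)}_{ij}W^{*}=W^{*}f^{(k)}_{ij}$; consequently $W^{*}W$ commutes with every unit of $C$ and is within $O(\delta)$ of $1_{A}$, hence invertible. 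The unitary $u:=W(W^{*}W)^{-1/2}$ from the polar decomposition then obeys $\|1_{A}-u\|=O(\delta)$, and since $(W^{*}W)^{-1/2}$ also commutes with $C$ one gets $ue^{(k)}_{ij}=f^{(k)}_{ij}u$, i.e.\ $ue^{(k)}_{ij}u^{*}=f^{(k)}_{ij}$. Therefore $uCu^{*}=C'\subset D$, and choosing $\delta=\delta(\varepsilon,n)$ small enough to force the accumulated bound on $\|1_{A}-u\|$ below $\varepsilon$ completes the proof.
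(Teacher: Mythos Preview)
Your argument is correct and is essentially the standard proof. Note, however, that the paper does not actually prove this lemma: it states the result and refers the reader to Lemma~III.3.2 of Davidson's book, so there is no ``paper's own proof'' to compare against. What you have sketched---perturbing the approximate matrix units inside $D$ to a genuine system $\mathcal{F}$ via functional calculus, orthogonalization and polar decomposition, then building the intertwiner $V=\sum f^{(k)}_{i1}e^{(k)}_{1i}$, extending across the complementary corners, and taking the unitary part---is exactly the mechanism in Davidson's proof. One small remark: when you pass to $u=W(W^{*}W)^{-1/2}$ you should note that $\|1_{A}-W\|<1$ makes $W$ itself invertible, so $u$ is genuinely a unitary (not merely an isometry); this is implicit in your $O(\delta)$ bookkeeping but worth saying explicitly.
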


This next result is crucial to the main theorem of this paper, and so we offer a full detailed proof.

\begin{lem}\label{approx ucp by homo}Let $B\cong \mathbb{M}_{n_{1}}\oplus\cdots\oplus\mathbb{M}_{n_{s}}$ be a finite dimensional \cstar-algebra with $\dim(B)=d$ and system of canonical matrix units $\mathcal{E}=\{e_{i,j}^{r}\}$. Then for every $\varepsilon >0$, there is a $\delta=\delta(\varepsilon,d) >0$ with the following property:

Given a u.c.p.\ map $\varphi:B\rightarrow \mathbb{M}_{k}$ which is approximately multiplicative on $\mathcal{E}$ within $\delta$, there is a unital $\ast$-homomorphism $\sigma:B\rightarrow \mathbb{M}_{k}$ with

 \[\|\sigma(x)-\varphi(x)\|<\varepsilon\|x\|\qquad \mbox{for every}\; x\in B.\]
\end{lem}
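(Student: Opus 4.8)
The plan is to replace the approximate matrix units $f_{i,j}^{r}:=\varphi(e_{i,j}^{r})$ by a genuine system of matrix units inside $\mathbb{M}_{k}$ lying uniformly close to them, and then to let $\sigma$ be the $*$-homomorphism determined by that exact system. First I would record what the hypotheses give for the $f_{i,j}^{r}$. Since $\varphi$ is $*$-linear we have exactly $(f_{i,j}^{r})^{*}=f_{j,i}^{r}$; since $\varphi$ is approximately multiplicative on $\mathcal{E}$ within $\delta$ we have $\|f_{i,j}^{r}f_{k,l}^{r'}-\delta_{jk}\delta_{rr'}f_{i,l}^{r}\|<\delta$; since $\varphi$ is positive and contractive each diagonal element $f_{i,i}^{r}$ is a positive contraction; and since $\varphi$ is unital we have $\sum_{r,i}f_{i,i}^{r}=\varphi(1_{B})=1_{\mathbb{M}_{k}}$. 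Thus $\{f_{i,j}^{r}\}$ is a \emph{$\delta$-approximate system of matrix units} whose $*$-structure and whose identity $\sum_{r,i}f_{i,i}^{r}=1$ are already exact.

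The heart of the argument is a quantitative perturbation turning these into exact matrix units $\{\tilde e_{i,j}^{r}\}$ with $\|\tilde e_{i,j}^{r}-f_{i,j}^{r}\|<\eta$, where $\eta=\eta(\delta)\to 0$ as $\delta\to 0$ and all estimates are functional-calculus and polar-decomposition estimates \emph{independent of the ambient dimension $k$}. I would proceed in two stages. For the diagonal, the estimate $\|(f_{i,i}^{r})^{2}-f_{i,i}^{r}\|<\delta$ confines the spectrum of the positive contraction $f_{i,i}^{r}$ to a $\delta$-neighborhood of $\{0,1\}$, so the spectral projection $p_{i}^{r}:=\chi_{[1/2,1]}(f_{i,i}^{r})$ satisfies $\|p_{i}^{r}-f_{i,i}^{r}\|=O(\delta)$; the approximate orthogonality of the $f_{i,i}^{r}$ then lets one perturb $\{p_{i}^{r}\}$ to a genuinely orthogonal family. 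Because $\sum_{r,i}f_{i,i}^{r}=1$ exactly, the sum $\sum_{r,i}p_{i}^{r}$ is a projection within distance $<1$ of $1_{\mathbb{M}_{k}}$ once $\delta$ is small, and a projection within distance $<1$ of the unit must \emph{equal} the unit; hence the orthogonal family sums to exactly $1$. For the off-diagonal, within each block $r$ I would form the compression $p_{j}^{r}f_{j,1}^{r}p_{1}^{r}$ and take its polar decomposition; the approximate relations $f_{j,1}^{r}f_{1,j}^{r}\approx f_{j,j}^{r}$ and $f_{1,j}^{r}f_{j,1}^{r}\approx f_{1,1}^{r}$ force the resulting partial isometry $v_{j}^{r}$ to satisfy $(v_{j}^{r})^{*}v_{j}^{r}=p_{1}^{r}$ and $v_{j}^{r}(v_{j}^{r})^{*}=p_{j}^{r}$ (in particular $p_{1}^{r}$ and $p_{j}^{r}$ are forced to have equal rank) with $\|v_{j}^{r}-f_{j,1}^{r}\|=O(\delta)$. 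Setting $v_{1}^{r}=p_{1}^{r}$ and $\tilde e_{i,j}^{r}:=v_{i}^{r}(v_{j}^{r})^{*}$ then yields an exact system of matrix units that is $\eta$-close to $\{f_{i,j}^{r}\}$.

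With the exact matrix units in hand, define $\sigma:B\to\mathbb{M}_{k}$ by $\sigma(e_{i,j}^{r})=\tilde e_{i,j}^{r}$ and extend linearly; since the $\tilde e_{i,j}^{r}$ are genuine matrix units summing to $1$, $\sigma$ is automatically a unital $*$-homomorphism. For the error estimate, write $x=\sum_{r,i,j}c_{i,j}^{r}e_{i,j}^{r}$, where each $c_{i,j}^{r}$ is a matrix entry of a block of $x$ and hence $|c_{i,j}^{r}|\le\|x\|$, and where the number of summands is $\sum_{r}n_{r}^{2}=\dim(B)=d$. Then
\[
\|\sigma(x)-\varphi(x)\|=\Bigl\|\sum_{r,i,j}c_{i,j}^{r}\bigl(\tilde e_{i,j}^{r}-f_{i,j}^{r}\bigr)\Bigr\|\le\Bigl(\sum_{r,i,j}|c_{i,j}^{r}|\Bigr)\max_{r,i,j}\|\tilde e_{i,j}^{r}-f_{i,j}^{r}\|\le d\,\eta\,\|x\|,
\]
so choosing $\delta=\delta(\varepsilon,d)$ small enough that the perturbation above yields $\eta<\varepsilon/d$ gives $\|\sigma(x)-\varphi(x)\|<\varepsilon\|x\|$, as required.

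The main obstacle I anticipate is the dimension-independence of the perturbation in the second paragraph: producing exactly orthogonal projections and then compatible partial isometries $v_{j}^{r}$ (with the rank-matching $(v_{j}^{r})^{*}v_{j}^{r}=p_{1}^{r}$, $v_{j}^{r}(v_{j}^{r})^{*}=p_{j}^{r}$ that genuine matrix units demand) while keeping every constant a function of $\delta$ and $d$ alone, never of $k$. The clean way to guarantee the orthogonal family still exhausts the unit is the observation that a projection at distance $<1$ from $1_{\mathbb{M}_{k}}$ equals $1_{\mathbb{M}_{k}}$; the rest is standard polar-decomposition and functional-calculus bookkeeping, which I would carry out but not belabor.
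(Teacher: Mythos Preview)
Your argument is correct and complete in outline; the perturbation of almost-matrix-units to exact matrix units via functional calculus on the diagonals and polar decomposition on the off-diagonals is standard, and you have correctly identified the two points that need care (that the orthogonal family of projections sums to exactly $1$ because a projection within distance $<1$ of the identity is the identity, and that the supports of $w^{*}w$ and $ww^{*}$ are exactly $p_{1}^{r}$ and $p_{j}^{r}$ because these elements are invertible in their respective corners). All constants depend only on $\delta$ and $d$, never on $k$, since the estimates come from continuous functional calculus on fixed functions.

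The paper takes a rather different route. Instead of perturbing the images $\varphi(e_{i,j}^{r})$ directly, it invokes the Stinespring dilation $\varphi(x)=V^{*}\pi(x)V$ with $\pi:B\to\mathbb{M}_{l}$ a genuine $*$-homomorphism, and shows that the Stinespring projection $P=VV^{*}$ almost commutes with $\pi(B)$ (the commutator norm is $\leq d\sqrt{\delta}$ on the unit ball). Averaging $P$ over the compact unitary group of $\pi(B)$ gives an element of the commutant $\pi(B)'$ close to $P$, which is then perturbed to a projection $q\in\pi(B)'$; a unitary $u$ close to $1$ conjugates $P$ to $q$, and $\sigma(b):=V^{*}u\pi(b)u^{*}V$ is then automatically a $*$-homomorphism because $q$ commutes with $\pi(B)$. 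This approach trades your explicit matrix-unit bookkeeping for a single conceptual step (compression by a commuting projection is multiplicative), and it uses the u.c.p.\ hypothesis more essentially through Stinespring. Your approach is more elementary---no dilation, no Haar integral---and would work just as well if $\varphi$ were merely unital, $*$-linear, and contractive on each $e_{i,j}^{r}$, whereas the paper's proof genuinely needs complete positivity to invoke Stinespring.
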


\begin{proof} Let $B$ and $\varepsilon>0$ be given, we will later choose an appropriate $\delta$ depending only on $\varepsilon$ and on $d=$dim$(B)$. By Stinespring's dilation theorem, the u.c.p. map $\varphi:B\rightarrow \mathbb{M}_{k}$ is the compression of a unital $\ast$-homomorphism. More precisely, there is an isometry $V:\ell_{2}^{k}\rightarrow \ell_{2}^{l}$ and a unital $\ast$-homomorphism $\pi:B\rightarrow \mathbb{M}_{l}$ such that $\varphi(x)=V^{*}\pi(x)V$ for every $x\in B$. Let $P=VV^{*}$ denote the Stinespring projection in $\mathbb{M}_{l}$.\\

\noindent \textbf{Claim 1.} $\|[P,\pi(e)]\|<\sqrt\delta$ for every $e\in\mathcal{E}$, provided $\varphi$ is approximately multiplicative on $\mathcal{E}$ within $\delta$.

Using the identity $Pa-aP=Pa(1-P)-(1-P)aP$ for $a\in\mathbb{M}_{l}$, we get
\[\|Pa-aP\|=\max\{\|Paa^{*}P-PaPa^{*}P\|^{1/2},\|Pa^{*}aP-Pa^{*}PaP\|^{1/2}\}\]
If $e\in\mathcal{E}$, so is $e^{*}$, so setting $a=\pi(e)$, we get
\begin{align*}
\|P\pi(e)\pi(e)^{*}P-P\pi(e)P\pi(e)^{*}P\|&=\|VV^{*}\pi(ee^{*})VV^{*}-VV^{*}\pi(e)VV^{*}\pi(e^{*})VV^{*}\|\\
&=\|V\varphi(ee^{*})V^{*}-V\varphi(e)\varphi(e^{*})V^{*}\|\\&=\|V(\varphi(ee^{*})-\varphi(e)\varphi(e^{*}))V^{*}\|\\
&\leq \|\varphi(ee^{*})-\varphi(e)\varphi(e^{*})\|<\delta.
\end{align*}
Similarly, $\|P\pi(e)^{*}\pi(e)P-P\pi(e)^{*}P\pi(e)P\|<\delta$, and together with the above estimate we get $\|P\pi(e)-\pi(e)P\|<\sqrt\delta$ as claimed.\newline

\noindent \textbf{Claim 2.} Let $C=\pi(B)\subset \mathbb{M}_{l}$, then dim$(C)\leq d$ and $\|[P,u]\|<\sqrt\delta d$ for every $u\in \Ball(C)$, in particular for every unitary $u\in \mathcal{U}(C)$. \newline

If $u\in \Ball(C)$, we can lift $u$ to an $x\in\Ball(B)$ with $\pi(x)=u$. Write
\[x=\sum_{i,j,r}\alpha_{i,j}^{(r)}e_{i,j}^{(r)}, \qquad |\alpha_{i,j}^{(r)}|\leq 1.\]
Straightforward estimates yield
\begin{align*}
\|Pu-uP\|&=\|P\pi(x)-\pi(x)P\|=\|\sum_{i,j,r}\alpha_{i,j}^{(r)}(P\pi(e_{i,j}^{(r)})-\pi(e_{i,j}^{(r)})P)\|\\
&\leq\sum_{i,j,r}|\alpha_{i,j}^{(r)}|\|P\pi(e_{i,j}^{(r)})-\pi(e_{i,j}^{(r)})P\|\leq d\sqrt\delta
\end{align*}
where we've used Claim 1 and the fact that $|\alpha_{i,j}^{(r)}|\leq 1$. This proves Claim 2.

Now $C\subset\mathbb{M}_{l}$ is a finite dimensional subalgebra, so we have a conditional expectation $\mathbb{E}:\mathbb{M}_{l}\rightarrow C^{'}\cap\mathbb{M}_{l}$ given by
\[\mathbb{E}(a) = \int_{\mathcal{U}(C)}uau^{*}du.\]
where $du$ is the normalized Haar measure on $\mathcal{U}(C)$. Using the estimate from Claim 2 we have
\begin{align*}
\|\mathbb{E}(P)-P\|&=\bigg\|\int_{\mathcal{U}(C)}uPu^{*}du - \int_{\mathcal{U}(C)}Pdu\bigg\|=\bigg\|\int_{\mathcal{U}(C)}(uPu^{*}-P)du\bigg\|\\&\leq \int_{\mathcal{U}(C)}\|uPu^{*}-P\|du=\int_{\mathcal{U}(C)}\|uP-Pu\|du \leq d\sqrt\delta.
\end{align*}

Now let $0<\eta=\eta(\varepsilon)<1$, to be determined later. We know from standard perturbation results that there is a $\delta^{'}>0$ with the following property: if $A$ is any unital \cstar-algebra, $B\subset A$ is a unital subalgebra and $p\in \mathcal{P}(A)$ a projection with $\|p-b\|<\delta^{'}$, then there is a projection $q\in \mathcal{P}(B)$ with $\|p-q\|<\eta$. Making sure that $d\sqrt\delta<\delta^{'}$, there is a projection $q\in \mathcal{P}(C^{'}\cap\mathbb{M}_{l})$ with $\|P-q\|<\eta$. We may then find a unitary $u$ in $\mathbb{M}_{l}$ with $u^{*}Pu=q$ and $\|1-u\|\leq \sqrt 2 \eta$. Now we define
\[\sigma:B\rightarrow \mathbb{M}_{k}\qquad \sigma(b)=V^{*}u\pi(b)u^{*}V.\]
We claim that $\sigma$ is a unital $\ast$-homomorphism. Indeed, $\sigma(1)=V^{*}u\pi(1)u^{*}V=V^{*}uu^{*}V=V^{*}V=1$, and for $a$ and $b$ in $B$,
\begin{align*}
\sigma(a)\sigma(b)&=V^{*}u\pi(a)u^{*}VV^{*}u\pi(b)u^{*}V=V^{*}u\pi(a)u^{*}Pu\pi(b)u^{*}V\\
&=V^{*}u\pi(a)q\pi(b)u^{*}V=V^{*}u\pi(a)\pi(b)qu^{*}V=V^{*}u\pi(ab)qu^{*}V\\
&=V^{*}u\pi(ab)u^{*}PV=V^{*}u\pi(ab)u^{*}VV^{*}V=V^{*}u\pi(ab)u^{*}V=\sigma(ab).
\end{align*}
We now compute the desired perturbation
\begin{align*}
\|\sigma(x)-\varphi(x)\|&=\|V^{*}u\pi(x)u^{*}V-V^{*}\pi(x)V\|=\|V^{*}(u\pi(x)u^{*}-\pi(x))V\|\\
&\leq\|u\pi(x)u^{*}-\pi(x)\|\leq 2\|u-1\|\|\pi(x)\|\leq 2\sqrt2\eta\|x\|.
\end{align*}
Now simply choose $\eta$ so small that $2\sqrt2\eta<\varepsilon$.

\end{proof}

The next lemma is a straightforward application of spectral theory.

\begin{lem}\label{spectral lemma} Let $A$ be an AF algebra, $\mathcal{F}\subset A_{sa}$ a finite subset, and $\varepsilon>0$ be given. Then there is a finite dimensional subalgebra $B\subset A$ such that for every $a\in\mathcal{F}$ there are orthogonal projections $p_{1},\dots,p_{n}$ in $B$ and scalars $\lambda_{1},\dots,\lambda_{n}$ with

\[\|a-\sum_{j=1}^{n}\lambda_{j}p_{j}\|<\varepsilon.\]
\end{lem}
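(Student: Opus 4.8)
The plan is to exploit the defining property of an AF algebra, namely density of finite-dimensional subalgebras, together with the spectral theorem in finite dimensions. First I would write $A=\overline{\bigcup_{m}C_{m}}$ for an increasing chain of finite-dimensional \cstar-subalgebras $C_{m}\subset A$. Since $\mathcal{F}$ is finite, I can choose a single index $m$ large enough that every $a\in\mathcal{F}$ admits an approximant $x_{a}\in C_{m}$ with $\|a-x_{a}\|<\varepsilon/2$.

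The next step handles self-adjointness, which the $x_{a}$ need not enjoy a priori. Because each $a$ is self-adjoint, passing to the real part $c_{a}:=\tfrac{1}{2}(x_{a}+x_{a}^{*})\in C_{m}$ only improves the estimate: indeed $\|a-c_{a}\|=\tfrac{1}{2}\|(a-x_{a})+(a-x_{a})^{*}\|\leq\|a-x_{a}\|<\varepsilon/2$. Thus each $a$ lies within $\varepsilon/2$ of a self-adjoint element of the single finite-dimensional algebra $B:=C_{m}$.

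Finally, I would invoke the spectral theorem inside the finite-dimensional \cstar-algebra $B$. A self-adjoint element $c_{a}\in B$ has finite spectrum $\{\lambda_{1},\dots,\lambda_{n}\}$, and its spectral projections $p_{1},\dots,p_{n}$, obtainable by Lagrange interpolation as polynomials in $c_{a}$, are orthogonal projections lying in $B$ and satisfy $c_{a}=\sum_{j=1}^{n}\lambda_{j}p_{j}$ exactly. Combining this with the previous estimate yields $\|a-\sum_{j=1}^{n}\lambda_{j}p_{j}\|=\|a-c_{a}\|<\varepsilon/2<\varepsilon$, which is the desired conclusion.

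I do not anticipate any serious obstacle here; the result is essentially the finite-dimensional spectral theorem dressed up for the AF setting. The only two points meriting a word of care are that the approximants can be taken self-adjoint, which is settled by passing to real parts, and that the spectral projections genuinely belong to $B$ rather than to some ambient algebra, which is guaranteed precisely because in finite dimensions they are polynomials in the element. Since $\mathcal{F}$ is finite a single $B$ serves all of $\mathcal{F}$ at once, while the projections $p_{j}$, the scalars $\lambda_{j}$, and their number $n$ are permitted to depend on the individual $a\in\mathcal{F}$.
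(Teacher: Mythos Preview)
Your proof is correct and follows exactly the approach the paper has in mind: the paper does not supply a proof but merely remarks that the lemma ``is a straightforward application of spectral theory,'' and your argument---approximate in a finite-dimensional subalgebra, pass to real parts, and diagonalize---is precisely that straightforward application. The two points you flag (self-adjointness of the approximants and membership of the spectral projections in $B$) are the only places requiring care, and you handle both correctly.
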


\subsection{Non-commutative Case}

We now wish to explore the $K$-theoretic expressions that describe  RFD, QD and MF \cstar-systems which in turn shed light on the structure of reduced crossed product. We begin with the more restrictive case; RFD actions.

\begin{defn} Let $A$ be a unital stably finite algebra. An action $\alpha:\Gamma\rightarrow\Aut(A)$ is said to be \emph{$K_{0}$-RFD} if the following holds: Given any non-zero $g\in K_{0}(A)^{+}$, there is a positive group homomorphism $\mu: K_{0}(A)\rightarrow\mathbb{Z}$ with
\begin{enumerate}
\item $\mu([1_{A}])>0$, and $\mu(g)>0$.
\item $\mu(\hat\alpha_{s}(x))=\mu(x)$ for every $x\in K_{0}(A)$.
\end{enumerate}
\end{defn}

\begin{prop}\label{K-RFD} Let $A$ be a unital stably finite algebra and $\alpha:\Gamma\rightarrow\Aut(A)$ an action. Consider the following properties:
\begin{enumerate}
\item $A\rtimes_{\lambda,\alpha}\Gamma$ is RFD.
\item The action $\alpha$ is RFD.
\item The action $\alpha$ is $K_{0}$-RFD.
\end{enumerate}

Then $(1)\Leftrightarrow(2)\Rightarrow(3)$. Moreover, if $A$ is AF and $\Gamma=\mathbb{F}_{r}$, then $(3)\Rightarrow(2)$, whence all three properties are equivalent.
\end{prop}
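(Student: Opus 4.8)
I want to prove Proposition~\ref{K-RFD}, namely that for a unital stably finite algebra $A$ the implications $(1)\Leftrightarrow(2)\Rightarrow(3)$ hold, and that $(3)\Rightarrow(2)$ when $A$ is AF and $\Gamma=\mathbb{F}_r$. The equivalence $(1)\Leftrightarrow(2)$ is essentially already in hand: it is Theorem~\ref{KerrNowak}(1), which states that $A\rtimes_{\lambda,\alpha}\Gamma$ is RFD iff both $\cstar_\lambda(\Gamma)$ is RFD and $\alpha$ is RFD. Since the hypothesis ``$\alpha$ is RFD'' forces $\cstar_\lambda(\Gamma)$ to be RFD as well (via Proposition~\ref{MF cross implies MF act}(1) applied to the crossed product, or by noting that an RFD action produces finite-dimensional representations whose restriction to the canonical group unitaries give the needed representations of $\cstar_\lambda(\Gamma)$), the two conditions collapse and I obtain $(1)\Leftrightarrow(2)$ directly. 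So the real content is $(2)\Rightarrow(3)$ and the converse $(3)\Rightarrow(2)$ in the free-group AF case.

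\textbf{The easy direction $(2)\Rightarrow(3)$.}
Given a nonzero $g\in K_0(A)^+$, I must produce an invariant positive homomorphism $\mu\colon K_0(A)\to\mathbb{Z}$ with $\mu([1_A])>0$ and $\mu(g)>0$. Writing $g=[p]$ for a projection $p\in\mathcal{P}_\infty(A)$, I use the RFD action to get, for a suitable finite set $\Omega$ containing $p$ and $1_A$, a $\ast$-homomorphism $\pi\colon A\to\mathbb{M}_d$ together with a unitary representation $v\colon\Gamma\to\mathcal{U}(\mathbb{M}_d)$ satisfying $\pi\circ\alpha_s=\Ad_{v(s)}\circ\pi$ and $\|\pi(p)\|>\|p\|-\varepsilon$, the latter guaranteeing $\pi(p)\neq 0$ so that $[\pi(p)]>0$ in $K_0(\mathbb{M}_d)\cong\mathbb{Z}$. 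Applying $K_0$ gives a positive homomorphism $\hat\pi=K_0(\pi)\colon K_0(A)\to\mathbb{Z}$; the exact equivariance $\pi\circ\alpha_s=\Ad_{v(s)}\circ\pi$ together with the fact that $K_0(\Ad_{v(s)})=\id$ on $K_0(\mathbb{M}_d)$ yields $\hat\pi\circ\hat\alpha_s=\hat\pi$, which is precisely the invariance in condition~(2) of being $K_0$-RFD. Then $\mu:=\hat\pi$ satisfies $\mu([1_A])=d>0$ and $\mu(g)=[\pi(p)]>0$, as required.

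\textbf{The hard direction $(3)\Rightarrow(2)$.}
This is where the AF hypothesis and the freeness of $\mathbb{F}_r$ do the work, and I expect this to be the main obstacle. Starting from $\varepsilon>0$ and a finite $\Omega\subset A$, I first use Lemma~\ref{spectral lemma} to approximate the self-adjoint parts of $\Omega$ by spectral combinations lying in a finite-dimensional subalgebra $B\subset A$; I then need a $\ast$-homomorphism $\pi\colon A\to\mathbb{M}_k$, isometric to within $\varepsilon$ on $\Omega$, and unitaries $v_1,\dots,v_r$ implementing $\alpha_{s_1},\dots,\alpha_{s_r}$ exactly in the sense $\pi\circ\alpha_{s_j}=\Ad_{v_j}\circ\pi$. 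The strategy is to manufacture, for each generator $s_j$ and each relevant projection, an invariant state $\mu$ from the $K_0$-RFD hypothesis, assemble these into a positive order-unit-preserving map $K_0(B)\to\mathbb{Z}^m\cong K_0(\mathbb{C}^m)$, realize it via a genuine $\ast$-homomorphism on the finite-dimensional $B$ (using the lifting lemma, Lemma~1.3.4 of~\cite{Ro}, as in the commutative Proposition), and then compare the images $\pi(B)$ and $\pi(\alpha_{s_j}(B))$ inside $\mathbb{M}_k$. The key point is that the invariance $\mu\circ\hat\alpha_{s_j}=\mu$ forces $[\pi(\alpha_{s_j}(q))]=[\pi(q)]$ in $K_0(\mathbb{M}_k)$ for each projection $q$, so the two finite-dimensional subalgebras are unitarily equivalent; Lemma~\ref{mtx units pert} (or~\ref{approx ucp by homo}) then supplies unitaries $v_j$ with $\Ad_{v_j}\circ\pi\approx\pi\circ\alpha_{s_j}$ on $B$. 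The crucial use of $\mathbb{F}_r$ being \emph{free} is that no relations among the generators must be respected: I may define $v$ on the free generators arbitrarily (subject only to the per-generator equivariance) and extend by the universal property, so there is no obstruction to promoting the approximate, generator-wise equivariance to an honest unitary representation of $\mathbb{F}_r$. I expect the delicate bookkeeping to be in passing from the finite-dimensional, projection-level $K_0$ equalities to the exact equivariance on all of $A$ (not merely on $B$), and in ensuring that the finitely many invariant states produced by hypothesis~(3) can be simultaneously realized by a single finite-dimensional representation; managing the $\varepsilon$'s through the perturbation lemmas while keeping the resulting homomorphism genuinely multiplicative is the technical heart of the argument.
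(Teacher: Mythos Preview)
Your treatment of $(1)\Leftrightarrow(2)$ and $(2)\Rightarrow(3)$ matches the paper. The gap is in $(3)\Rightarrow(2)$. Your proposed route---pass to a finite-dimensional subalgebra $B$, assemble several invariant states into a map $K_0(B)\to\mathbb{Z}^m$, lift to a $\ast$-homomorphism on $B$, and then use the perturbation Lemmas~\ref{mtx units pert} or~\ref{approx ucp by homo} to produce unitaries with $\Ad_{v_j}\circ\pi\approx\pi\circ\alpha_{s_j}$ on $B$---is essentially the machinery built for the \emph{quasidiagonal} implication in Theorem~\ref{QDiffK0QD}, and it naturally yields only \emph{approximate} equivariance on a finite-dimensional piece. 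An RFD action, however, requires the \emph{exact} identity $\pi\circ\alpha_s=\Ad_{v(s)}\circ\pi$ on all of $A$, and this cannot be obtained from approximate equivariance on $B$ by ``delicate bookkeeping'' with $\varepsilon$'s. You also make life harder than necessary by producing one state per projection and per generator: the $K_0$-RFD hypothesis already hands you a single $\mu$ invariant under \emph{all} of $\Gamma$, and faithfulness at one well-chosen projection is enough.

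The paper's argument is shorter and avoids the gap. Given a self-adjoint $b$, approximate it by $c=\sum_j t_j p_j$ with orthogonal nonzero projections and $|t_1|=\|c\|$; apply $K_0$-RFD once, with $g=[p_1]$, to get a single $\Gamma$-invariant $\mu:K_0(A)\to\mathbb{Z}$ with $\mu([1_A])=d$ and $\mu([p_1])>0$. Now invoke Lemma~1.3.4 of~\cite{Ro} \emph{on the whole AF algebra $A$}, not on $B$, to lift $\mu$ to a unital $\ast$-homomorphism $\pi:A\to\mathbb{M}_d$. For each generator $s_j$, the $\ast$-homomorphisms $\pi$ and $\pi\circ\alpha_{s_j}$ from $A$ to $\mathbb{M}_d$ agree on $K_0$ by invariance of $\mu$, so the second clause of that same lemma gives a sequence $(u_n)\subset\mathcal{U}(\mathbb{M}_d)$ with $\Ad_{u_n}\circ\pi\to\pi\circ\alpha_{s_j}$ pointwise. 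The step you are missing is then this: by \emph{compactness of $\mathcal{U}(\mathbb{M}_d)$} one may pass to a convergent subsequence with limit $u_j$, and continuity gives the exact equality $\pi\circ\alpha_{s_j}=\Ad_{u_j}\circ\pi$ on all of $A$. The universal property of $\mathbb{F}_r$ then defines the representation $v$, and $\mu([p_1])>0$ forces $\pi(p_1)\neq0$, hence $\|\pi(c)\|=|t_1|=\|c\|$ and the $\varepsilon$-isometry at $b$ follows.
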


\begin{proof} $(1)\Leftrightarrow(2)$ was shown in Theorem~\ref{KerrNowak}.

$(2)\Rightarrow(3)$: Let $\alpha$ be an RFD action and let $g=[p]$ be a non-zero element in $K_{0}(A)$, in which case $p\neq0$. By amplifying the action we may assume that $p$ is a non-zero projection in $A$. Setting $\varepsilon=1/2$, there is a $\ast$-homomorphism $\pi:A\rightarrow\mathbb{M}_{d}$ and an inner action $\gamma\curvearrowright\mathbb{M}_{d}$ such that
\begin{enumerate}
\item[(i)] $\|\pi(q)\|\geq \|q\|-1/2=1/2,$
\item[(ii)] $\pi(\alpha_{s}(a))=\gamma_{s}(\pi(a))$ for every $s\in\Gamma$ and $a\in A$.
\end{enumerate}
Applying the $K_{0}$ functor we get a positive group homomorphism $\hat\pi:K_{0}(A)\rightarrow K_{0}(\mathbb{M}_{d})$ with $\hat\pi([1_{A}])=[1_{\mathbb{M}_{d}}]$. The action $\gamma$ induces the trivial action at the $K_{0}$-level so that condition (ii) implies $\hat\pi(\hat\alpha_{s}([q]))=\hat\pi([q])$ for every $q\in\mathcal{P}_{\infty}(A)$. Recall that $K_{0}(A)=K_{0}(A)^{+}-K_{0}(A)^{+}$, so that $\hat\pi(\hat\alpha_{s}(x))=\hat\pi(x)$ for every $x\in K_{0}(A)$. Now let
\[\mu=\beta\circ\hat\pi: K_{0}(A)\rightarrow\mathbb{Z}\]
where $\beta:(K_{0}(A),K_{0}(A)^{+},[1_{A}])\rightarrow(\mathbb{Z},\mathbb{Z}^+,d)$ is an isomorphism of ordered abelian groups. Clearly $\mu$ is a positive group homomorphism that satisfies the the required equivariance condition as well as $\mu([1])=d>0$ and $\mu(g)\geq0$. Also, by stable finiteness
\[\mu(g)=0 \Rightarrow \beta([\pi(q)])=0\Rightarrow [\pi(q)]=0\Rightarrow \pi(q)=0,\]
a contradiction.

We establish the implication $(3)\Rightarrow(2)$ in the case where $A$ is an AF algebra and $\Gamma=\mathbb{F}_{r}$ is a free group. Suppose now that $\alpha$ satisfies the $K_{0}$-RFD condition. Let $\varepsilon>0$ and $b=b^*\in A$. Since $A$ has real rank zero, there are orthogonal non-zero projections $p_{1},\dots,p_{n}$ in $A$ and scalars $t_{1},\dots,t_{n}$ such that
\[\|b-\sum_{j=1}^{n}t_{j}p_{j}\|<\varepsilon/2.\]
We may as well assume that $\|\sum_{j=1}^{n}t_{j}p_{j}\|=\max_{1\leq j\leq n}|t_{j}|=|t_{1}|$. Set $g=[p_{1}]$ and apply the $K_{0}$-RFD condition. We obtain a positive group homomorphism $\mu$ and set $\mu([1_{A}])=d$. By composing with an ordered group isomorphism we may suppose that $\mu$ takes values in $K_{0}(\mathbb{M}_{d})$ and $\mu([1])=[1_{\mathbb{M}_{d}}]$. By lemma 1.3.4 of~\cite{Ro} there is a unital $\ast$-homomorphism $\pi:A\rightarrow \mathbb{M}_{d}$ such that $\hat\pi=\mu$. Fix a generator $s_{j}\in\mathbb{F}_{r}$ where $1\leq j\leq r$. The two $\ast$-homomorphisms $\pi$ and $\pi\circ\alpha_{s_{j}}$ from $A$ to $\mathbb{M}_{d}$ agree at the $K$-theoretic level by condition (ii) of $K_{0}$-RFD. Utilizing once more lemma 1.3.4 of~\cite{Ro} there is a sequence of unitaries $(u_{n})_{n\geq1}$ in $\mathcal{U}(\mathbb{M}_{d})$ with
\[\Ad_{u_{n}}\circ\ \pi(a)\stackrel{n\rightarrow\infty}{\longrightarrow}\pi\circ\alpha_{s_{j}}(a)\quad\forall a\in A.\]
By the compactness of $\mathcal{U}(d)$, we may assume $\|u_{n}-u_{j}\|\rightarrow 0$ for some unitary $u_{j}\in\mathbb{M}_{d}$. Thus $\pi\circ\alpha_{s_{j}}(a)=\Ad_{u_{j}}\circ\ \pi(a)$ for every $a$ in $A$. By the universal property of the free group we may now define an inner action $\gamma:\mathbb{F}_{r}\curvearrowright\mathbb{M}_{d}$ by $\gamma_{s_{j}}=\Ad_{u_{j}}$. Thus $\gamma_{s}(\pi(a))=\pi(\alpha_{s}(a))$ holds for every $s\in\mathbb{F}_{r}$ and $a\in A$.

By condition (i) of $K_{0}$-RFD $\pi(p_{1})$ is a non-zero projection in $\mathbb{M}_{d}$. Write $c=\sum_{j=1}^{n}t_{j}p_{j}$, so $\|c\|=\|\pi(c)\|=|t_{1}|$. Then note that
\[\|b\|\leq\|b-c\|+\|c\|\leq\varepsilon/2+\|\pi(c)\|\leq \varepsilon/2+\|\pi(c)-\pi(b)\|+\|\pi(b)\|\leq \varepsilon+\|\pi(b)\|,\]
so that $\alpha$ is an RFD action.

\end{proof}

Recall that for a stably finite unital \cstar-algebra $A$, a \emph{state} on $(K_{0}(A), K_{0}(A)^{+},[1])$ is a group homomorphism $\beta:K_{0}(A)\rightarrow \mathbb{R}$ with $\beta(K_{0}(A)^{+})\subset\mathbb{R}^{+}$ and $\beta([1])=1$. Given an action $\Gamma\curvearrowright A$,  a state $\beta$ is $\Gamma$-\emph{invariant} if $\beta(\hat\alpha_{s}(x))=\beta(x)$ for every $x\in K_{0}(A)$ and $s\in\Gamma$. Therefore, in a sense, an RFD system $(A,\Gamma,\alpha)$ is one that admits an \emph{integer}-valued invariant state on $K$-theory that is locally faithful. A word of caution is in order. The fact that the invariant state emerging from an RFD action is integer valued is much more restrictive. We may consider minimal Cantor systems $(X,\mathbb{Z})$ for example. These always admit an invariant tracial state on $C(X)$, but the induced invariant state on $K_{0}(C(X))$ can never be integer valued by virtue of the previous proposition and the fact that $C(X)\rtimes\mathbb{Z}$ is simple.

We proceed to look at QD actions $K$-theoretically. As in the commutative case we focus our attention on AF algebras, in which case the notions of QD and MF actions coincide by Proposition~\ref{QDactionequalsMFaction}.

\begin{defn} Let $(A,\Gamma,\alpha)$ be a \cstar-dynamical system with $A$ unital. We say that $\alpha$ is $K_{0}$-QD if the induced action $\hat\alpha:\Gamma\rightarrow\OAut(K_{0}(A))$ satisfies the following condition:

Given finite subsets $S\subset K_{0}(A)^{+}$ and $F \subset \Gamma$ there is a subgroup $H\leq K_{0}(A)$, along with a group homomorphism $\beta:H\rightarrow \mathbb{Z}$ satisfying
\begin{enumerate}
\item $[1_{A}]\in H$ and $S_{F}\subset H$,
\item $\beta([1_{A}])>0$ and $\beta(g)>0$ for each $0\neq g\in S$,
\item $\beta(\hat\alpha_{s}(g))=\beta(g)$ for all $g\in S$ and $s\in F$.
\end{enumerate}
\end{defn}

One may thus paraphrase condition $K_{0}$-QD by saying that the action admits faithful $\Gamma$-invariant states in a local sense.

\begin{prop}\label{Main1} Let $A$ be a unital AF algebra, $\Gamma$ a discrete group and $\alpha:\Gamma\rightarrow\Aut(A)$ an MF-action. Then the induced action $\alpha$ is $K_{0}$-QD.
\end{prop}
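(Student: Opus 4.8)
The plan is to manufacture the desired local invariant state by turning the approximate, unital $*$-linear data coming from the action into an \emph{honest} finite-dimensional $*$-homomorphism and then applying the $K_0$-functor. First I would record two reductions. Since $A$ is AF it is nuclear, so Proposition~\ref{QDactionequalsMFaction} upgrades the MF action $\alpha$ to a QD action; in particular the approximating maps $\varphi$ may be taken unital completely positive. Next, the elements of $S\subset K_0(A)^+$ and of $T:=S_F\cup\{[1_A]\}$ need honest projection representatives, which forces an amplification: I would choose $N$ so large that every class in $T$ equals $[Q]$ for a genuine projection $Q$ in $M_N(A)$, using $1_A$ placed in a corner to represent $[1_A]$ and $\alpha_s\otimes\id$ applied to representatives of $S$ to represent the iterates $\hat\alpha_s(g)$. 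The amplified action $\alpha\otimes\id_N$ on the AF algebra $M_N(A)$ is again QD (a routine check), and $K_0(M_N(A))=K_0(A)$ as ordered abelian groups, so the entire argument can be run inside $M_N(A)$ with $\beta$ and $H$ landing in $K_0(A)$ as required.

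With these in place I would build $\sigma$ and $\beta$ as follows. Using that $M_N(A)$ is AF, pick a finite-dimensional unital subalgebra $B\subset M_N(A)$ and some $\eta\in(0,1)$ so that each honest representative $Q$ above is within $\eta$ of a projection $Q^\flat\in B$, whence $[Q^\flat]=[Q]$; let $\mathcal{E}$ be a matrix-unit system for $B$. I would then feed $\Omega=\mathcal{E}\cup\{Q\}\cup\{Q^\flat\}$, the set $F$, and a small tolerance $\delta$ (adapted to $\dim B$) into the QD condition for $\alpha\otimes\id_N$, obtaining a u.c.p.\ map $\varphi:M_N(A)\to M_d$ and unitaries $v_s$ satisfying conditions (a)--(d) of Definition~\ref{defnMFaction}. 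Because $\varphi$ is approximately multiplicative on $\mathcal{E}$, Lemma~\ref{approx ucp by homo} produces a genuine unital $*$-homomorphism $\sigma:B\to M_d$ with $\|\sigma(x)-\varphi(x)\|$ small for $x\in B$. Applying $K_0$ gives $\hat\sigma:K_0(B)\to K_0(M_d)\cong\mathbb{Z}$, and I would set $\beta:=\hat\sigma\circ\lambda$ on $H:=\langle T\rangle$, where $\lambda:H\to K_0(B)$ sends a generator $g=[Q]$ to $[Q^\flat]$; concretely $\beta(g)=[\sigma(Q^\flat)]=\operatorname{rank}\sigma(Q^\flat)$.

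The verification of conditions (2) and (3) should then be soft. Positivity of $\beta$ on $[1_A]$ and on nonzero $g\in S$ follows from approximate isometry (condition (b)): since $Q\in\Omega$ is a nonzero projection, $\|\varphi(Q)\|$ is close to $1$, hence $\sigma(Q^\flat)$ is a \emph{nonzero} projection and $\beta(g)=\operatorname{rank}\sigma(Q^\flat)\geq 1$. Invariance follows from approximate equivariance (condition (c)) together with $\sigma\approx\varphi$ and $\|Q^\flat-Q\|<\eta$: a short chain of triangle inequalities shows that $\sigma\bigl((\alpha_s\otimes\id)(Q)^\flat\bigr)$ and $v_s\,\sigma(Q^\flat)\,v_s^{*}$ are projections in $M_d$ lying within distance $<1$, so they determine the same $K_0$-class, yielding $\beta(\hat\alpha_s g)=\beta(g)$ for $g\in S$ and $s\in F$.

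The hard part will be the well-definedness of $\beta$, i.e.\ the existence of $\lambda$. The integers $[\sigma(Q^\flat)]$ must respect every $\mathbb{Z}$-linear relation that the generators of $H$ satisfy in $K_0(M_N(A))$, yet $\hat\sigma$ only sees $K_0(B)$, where such relations need not persist. I would resolve this through the identification $K_0(M_N(A))=\varinjlim_n K_0(M_N(A_n))$: the relation subgroup among the finitely many generators of $H$ is finitely generated, so after fixing the perturbed projections $Q^\flat$ one can push $B$ far enough along the AF filtration that all of these relations already hold in $K_0(B)$, which is exactly what makes $\lambda$ (hence $\beta$) a well-defined homomorphism. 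The delicate point is reconciling ``far enough in the filtration'' (needed for relation-stability) with ``close enough'' (the $\eta$-perturbation needed for the norm estimates above); these are compatible, since enlarging $B$ only improves relation-stability while the chosen $Q^\flat$ may be retained. I expect this direct-limit bookkeeping, rather than any analytic estimate, to be the crux of the proof.
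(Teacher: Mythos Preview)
Your proposal is correct and follows essentially the same route as the paper: upgrade MF to QD via nuclearity, perturb the relevant projections into a finite-dimensional subalgebra, apply Lemma~\ref{approx ucp by homo} to replace the approximately multiplicative u.c.p.\ map by a genuine $*$-homomorphism, and read off $\beta$ via $K_0$, with well-definedness secured by pushing far enough along the AF filtration so that the needed $K_0$-relations already hold at the finite stage. The only organizational differences are that the paper reduces to the scale $\Sigma(A)$ and then sums (whereas you amplify to $M_N(A)$), and that the paper takes $H=\hat\iota(K_0(B))$ and a two-step inclusion $B\subset D$ (so well-definedness is the single line $\ker\hat\iota\subset\ker\hat j$) whereas you take $H=\langle T\rangle$ and build $\lambda$ on generators; both are equivalent packagings of the same direct-limit bookkeeping you correctly identify as the crux.
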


\begin{proof} Approximately finite dimensional algebras are nuclear, so by Proposition~\ref{QDactionequalsMFaction} we may assume that $\alpha:\Gamma\rightarrow\Aut(A)$ is a quasidiagonal action. Fix a finite subset $F=\{e=s_{1},\dots,s_{m}\}\subset \Gamma$. We shall first consider a finite subset $S=\{[p_{1}],\dots,[p_{n}]\}\subset\Sigma(A)$ of the scale of $A$. Since $A$ is AF we can locate a unital finite dimensional subalgebra $B\subset A$ with $\{\alpha_{s_{i}}(p_{j})\}_{i,j}\subset_{1/4}B$. By perturbing, there are projections $q_{i,j}\in B$ with $\|\alpha_{s_{i}}(p_{j})-q_{i,j}\|<1/4$, whence $[\alpha_{s_{i}}(p_{j})]=[q_{i,j}]$ in $K_{0}(A)$. Consider the natural inclusion $\iota:B\hookrightarrow A$ which induces a positive group homomorphism
\[\hat\iota:K_{0}(B)\rightarrow K_{0}(A),\quad\mbox{where}\quad \hat\iota([q])=[\iota(q)]=[q].\]
Set $K=\ker(\hat\iota)\leq K_{0}(B)$. Since $K_{0}(B)$ is a finitely generated abelian group, so is $K$, say $K=\langle t_{1},\dots,t_{l}\rangle=\mathbb{Z}t_{1}+\cdots+\mathbb{Z}t_{l}$. By the continuity of the functor $K_{0}$, there is a finite dimensional subalgebra $D$ of $A$ containing $B$ with the following property: if $j:B\hookrightarrow D$ denotes inclusion, $\hat j(t_{i})=0$ in $K_{0}(D)$ for $i=1,\dots,l$.

Let $\varepsilon>0$ (to be determined later), and choose $\delta=\delta(\varepsilon, \dim(D))<\varepsilon$ according to the above perturbation Lemma~\ref{mtx units pert}. Also, set $G=\{e_{i,j}^{r}\}\cup\{q_{i,j}\}$ where $\{e_{i,j}^{r}\}$ is a system of matrix units for $D$. Since $\alpha$ is quasidiagonal, there are a positive integer $d$, a u.c.p.\ map $\varphi:A\rightarrow \mathbb{M}_{d}$, and an action $\gamma:\Gamma\rightarrow\Aut(\mathbb{M}_{d})$ such that for every $a,b\in G$ and $s\in F$
\begin{gather*}
\|\varphi(ab)-\varphi(a)\varphi(b)\|<\delta, \\
\|\varphi(a)\|>\|a\|-\delta,\\
\|\varphi(\alpha_{s}(a))-\gamma_{s}(\varphi(a))\|<\delta.
\end{gather*}

Utilizing the perturbation Lemma~\ref{approx ucp by homo}, there is a unital $\ast$-homomorphism $\pi:D\rightarrow \mathbb{M}_{d}$ such that
\[\|\pi(a)-\varphi(a)\|<\varepsilon \qquad \mbox{for every}\quad a\in \mbox{Ball}(D).\]
With the positive group homomorphism $\hat\pi:K_{0}(D)\rightarrow K_{0}(\mathbb{M}_{d})$ at hand, we define the subgroup $H=\hat\iota(K_{0}(B))\leq K_{0}(A)$ and the map
\[\beta:H\rightarrow K_{0}(\mathbb{M}_{d})\cong\mathbb{Z}\qquad \beta(\hat\iota(g)):=\hat\pi(\hat j(g)).\]

\noindent \textbf{Claim 1.} $\beta$ is a well defined group homomorphism satisfying condition $(2)$ of $K_{0}$-QD.

Given $g,g'\in K_{0}(B)$, with $\hat\iota(g)=\hat\iota(g')$, we have $0=\hat\iota(g)-\hat\iota(g')=\hat\iota(g-g')$, so that $g-g'\in K$. By construction, $\hat j(g-g')=0$, so $\hat j(g)=\hat j(g')$ and thus
\[\beta(\hat\iota(g))=\hat\pi(\hat j(g))=\hat\pi(\hat j(g'))=\beta(\hat\iota(g')),\]
showing that $\beta$ is well defined. Clearly $\beta$ is additive on $H$, and observe that
\[\beta([1_{A}])=\beta(\hat\iota([1_{A}]))=\hat\pi(\hat j([1_{A}]))=[\pi(1)]=[1_{\mathbb{M}_{d}}]\cong d.\]

Now let $0\neq g=[p_{j}]=[q_{1j}]$ be in $S$, which implies by cancellation that $q_{1j}$ is a non zero projection. Then $\beta(g)=\hat\pi(\hat j([q_{1j}]))=[\pi(q_{1j})]$ which is clearly positive in $K_{0}(\mathbb{M}_{d})$. Finally, if $[\pi(q_{1j})]=0$, then by cancellation, $\pi(q_{1j})=0$. However,
\[ |\|\pi(q_{1j})\|-\|q_{1j}\||\leq |\|\pi(q_{1j})\|-\|\varphi(q_{1j})\||+|\|\varphi(q_{1j})\|-\|q_{1j}\||<\epsilon+\delta<2\varepsilon,\]
and since $\|q_{1j}\|=1$, by choosing $\varepsilon <1/3$, $\pi(q_{1j})$ must be a non-zero projection as well, an absurdity. The Claim is thus proved.\\

We now verify the promised equivariance with the induced action $\sigma:=K_{0}(\gamma):\Gamma\rightarrow\OAut(K_{0}(\mathbb{M}_{d}))$. If $g=[p_{j}]\in S$ and $s_{i}\in F$, note that
\[\hat\alpha_{s_{i}}(g)=\hat\alpha_{s_{i}}([p_{j}])=[\alpha_{s_{i}}(p_{j})]=[q_{i,j}]=[\iota(q_{i,j})]=\hat\iota([q_{i,j}])\]
belongs to $\hat\iota(K_{0}(B))=H$, so we may apply $\beta$ to this element and obtain
\[\beta(\hat\alpha_{s_{i}}(g))=\hat\pi(\hat j([q_{i,j}]))=[\pi(q_{i,j})].\]
On the other hand, first applying $\beta$ followed by the action $\sigma$ gives
\begin{align*}
\sigma_{s_{i}}\circ\beta(g)&=\hat\gamma_{s_{i}}\circ\beta([p_{j}])=\hat\gamma_{s_{i}}\circ\beta([q_{1,j}])
=\hat\gamma_{s_{i}}\circ\beta(\hat\iota[q_{1,j}])\\&=\hat\gamma_{s_{i}}\circ \hat\pi\circ \hat j([q_{1,j}])
=[\gamma_{s_{i}}(\pi(q_{1,j}))].
\end{align*}

\noindent \textbf{Claim 2.} For each $i,j$, $[\pi(q_{i,j})]=[\gamma_{s_{i}}(\pi(q_{1,j}))]$ in $K_{0}(\mathbb{M}_{d})$, which will give us the desired equivariance.

Note that
\begin{align*}\|\gamma_{s_{i}}(\pi(q_{1,j}))&-\pi(\alpha_{s_{i}}(q_{1,j}))\|\leq
\|\gamma_{s_{i}}(\pi(q_{1,j}))-\gamma_{s_{i}}(\varphi(q_{1,j}))\|\\
&+\|\gamma_{s_{i}}(\varphi(q_{1,j}))-\varphi(\alpha_{s_{i}}(q_{1,j}))\|
+\|\varphi(\alpha_{s_{i}}(q_{1,j}))-\pi(\alpha_{s_{i}}(q_{1,j}))\|\\
&< \varepsilon+\delta+\varepsilon<3\varepsilon.
\end{align*}
Choosing $\varepsilon<1/6$ we guarantee that $[\gamma_{s_{i}}(\pi(q_{1,j}))]=[\pi(\alpha_{s_{i}}(q_{1,j}))]$. Now $s_{1}=e$ so we have
\begin{align*}
\|\alpha_{s_{i}}(q_{1,j})-q_{i,j}\|&\leq\|\alpha_{s_{i}}(q_{1,j})-\alpha_{s_{i}}\circ\alpha_{s_{1}}(p_{j})\|+
\|\alpha_{s_{i}}\circ\alpha_{s_{1}}(p_{j})-q_{i,j}\|\\
&\leq\|q_{1,j}-\alpha_{s_{1}}(p_{j})\|+\|\alpha_{s_{i}}(p_{j})-q_{i,j}\|<1/4+1/4=1/2.
\end{align*}
Therefore $\|\pi(\alpha_{s_{i}}(q_{1,j}))-\pi(q_{i,j})\|<1/2$ and so $[\pi(\alpha_{s_{i}}(q_{1,j}))]=[\pi(q_{i,j})]$ in $K_{0}(\mathbb{M}_{d})$ and our claim holds.\\

For a positive integer $d$, any order automorphism $\sigma$ of $(\mathbb{Z},\mathbb{Z}^{+},d)$ must be trivial. Indeed, since $\sigma$ is a positive isomorphism $\sigma(1)> 0$, and since $d=\sigma(d)=d\cdot\sigma(1)$, we must have $\sigma(1)=1$. Therefore, any action $\sigma:\Gamma \rightarrow \OAut(\mathbb{Z},\mathbb{Z}^{+},d)$ of ordered abelian groups must be trivial, and the above equivariance now reads:
\[\beta(\hat\alpha_{s_{i}}(g))=\beta(g)\qquad \forall g\in S,\quad \forall s_{i}\in F.\]

Next, we must consider an arbitrary finite subset of the positive cone $K_{0}(A)^{+}$ and not restrict ourselves to its scale $\Sigma(A)$. This is not problematic, for as $A$ is AF, its scale $\Sigma(A)$ generates the positive cone $K_{0}(A)^{+}$. So, given a finite set $S=\{[p_{1}],\dots,[p_{m}]\}\subset K_{0}(A)^{+}$, where $p_{j}\in \mathcal{P}_{\infty}(A)$, write each $[p_{j}]=\sum_{i=1}^{I_{j}}t_{ji}[q_{ji}]$ where the $t_{ji}$ are positive integers and the $q_{ji}$ are non-zero projections in $A$. Set $S'=\{[q_{ji}] ; j=1,\dots,m,\ i=1,\dots,I_{j}\}\subset\Sigma(A)$, and by our above work, obtain a suitable $H$, and $\beta$ satisfying the required conditions for the given finite set $F\subset\Gamma$. Using the fact that $\hat\alpha_{t}$ is additive for each $t\in F$, and $\beta$ is faithful on non-zero elements of $S'$ the properties $(1),(2)$, and $(3)$ of $K_{0}$-QD will hold. Since the $t_{ji}$ are positive integers, and $\beta([q_{ji}])\geq 0$, observe that for every $j$
\[0=\beta([p_{j}])=\sum_{i}t_{ji}\beta([q_{ji}])\quad \Longrightarrow\quad \beta([q_{ji}])=0,\ \forall i\quad\Longrightarrow\quad [q_{ji}]=0\ \forall i\quad\Longrightarrow\quad [p_{j}]=0,\]
so that $\beta$ is indeed faithful on non-zero elements of $S$, completing the proof.
\end{proof}

When a free group is acting on a unital AF-algebra, $K_{0}$-QD actions coincide with QD actions.

\begin{thm}\label{QDiffK0QD} Let $A$ be a unital AF-algebra and $\alpha:\mathbb{F}_{r}\rightarrow\Aut(A)$ an action, where $r\in\{1,2,\dots,\infty\}$. Then $\alpha$ is quasidiagonal if and only if $\alpha$ is $K_{0}$-QD.
\end{thm}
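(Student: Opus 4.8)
The plan is to obtain one implication almost for free and to concentrate all the work on the converse. Every QD action is in particular an MF action (the defining maps are merely required to be completely positive rather than $\ast$-linear), so Proposition~\ref{Main1}, applied to the MF action $\alpha$, immediately gives that a quasidiagonal $\alpha$ is $K_{0}$-QD. Thus the entire content of the theorem lies in the implication $K_{0}$-QD $\Rightarrow$ QD, and it is here that both the AF structure and the freeness of $\mathbb{F}_{r}$ enter.

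For the converse, fix $\varepsilon>0$ and a finite $\Omega\subset A$; by the remark following Definition~\ref{defnMFaction} it suffices to test equivariance on generators, so take $F=\{s_{1},\dots,s_{r}\}$. Using the AF structure (Lemma~\ref{spectral lemma}) I would first choose a unital finite-dimensional $B\subseteq A$ with $\Omega\subset_{\delta}B$ for a small $\delta$ to be fixed later, with matrix units $\{e^{r}_{ik}\}$. Since each $\alpha_{s_{j}}(B)$ is again finite-dimensional, enlarge to a unital finite-dimensional $D\supseteq B$ with $\alpha_{s_{j}}(B)\subset_{\delta'}D$; by the matrix-unit perturbation Lemma~\ref{mtx units pert} there are unitaries $z_{j}\approx 1$ in $A$ with $z_{j}\alpha_{s_{j}}(B)z_{j}^{*}\subseteq D$, so that $\tilde e^{\,j,r}_{ik}:=z_{j}\alpha_{s_{j}}(e^{r}_{ik})z_{j}^{*}$ are matrix units of a second copy of $B$ inside $D$. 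Then apply the $K_{0}$-QD condition to $F$ and to the finite set $S$ consisting of the classes $[f]$ of \emph{all} minimal projections $f$ of $D$; this yields a subgroup $H\leq K_{0}(A)$ and a homomorphism $\beta:H\rightarrow\mathbb{Z}$ with $[1_{A}]\in H$, $S_{F}\subseteq H$, $\beta>0$ on $[1_{A}]$ and on nonzero members of $S$, and $\beta(\hat\alpha_{s}(g))=\beta(g)$ for $g\in S$, $s\in F$.

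The decisive step is to realize $\beta$ spatially. Because $S$ contains every minimal-projection class of $D$, the image $\hat\iota_{D}(K_{0}(D))$ of $\iota_{D}:D\hookrightarrow A$ lies in $\langle S\rangle\subseteq H$, so $\Theta:=\beta\circ\hat\iota_{D}:K_{0}(D)\rightarrow\mathbb{Z}$ is defined; property $(2)$ makes it positive (each minimal projection of $D$, having nonzero class in the stably finite $A$, receives a strictly positive multiplicity) and unital, with $\Theta([1_{D}])=\beta([1_{A}])=:d$. By the realization result for finite-dimensional algebras (Lemma 1.3.4 of~\cite{Ro}) there is a unital $\ast$-homomorphism $\pi:D\rightarrow\mathbb{M}_{d}$ with $\hat\pi=\Theta$. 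Strict positivity on minimal projections forces $\pi$ to be injective, hence isometric on $D$; extending $\pi$ by Arveson's theorem to a u.c.p.\ map $\varphi:A\rightarrow\mathbb{M}_{d}$, the approximate multiplicativity and approximate isometry of $\varphi$ on $\Omega$ follow from $\Omega\subset_{\delta}B$ together with $\pi|_{B}$ being an isometric $\ast$-homomorphism.

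It remains to upgrade the local invariance of $\beta$ to approximate spatial equivariance. Additivity of $\hat\alpha$ and $\beta$, together with property $(3)$ applied on $S$, give for every minimal projection $e^{r}_{ii}$ of $B$
\[
[\pi(\tilde e^{\,j,r}_{ii})]=\beta(\hat\alpha_{s_{j}}([e^{r}_{ii}]))=\beta([e^{r}_{ii}])=[\pi(e^{r}_{ii})],
\]
so the two unital copies of $B$ in $\mathbb{M}_{d}$ determined by $\{\pi(e^{r}_{ik})\}$ and $\{\pi(\tilde e^{\,j,r}_{ik})\}$ have equal multiplicities and are therefore conjugate by a single unitary $w_{j}\in\mathcal{U}(\mathbb{M}_{d})$ with $w_{j}\pi(e^{r}_{ik})w_{j}^{*}=\pi(\tilde e^{\,j,r}_{ik})$. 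Hence $\Ad_{w_{j}}\circ\pi=\pi\circ\Ad_{z_{j}}\circ\alpha_{s_{j}}$ on $B$, and since $z_{j}\approx 1$ and $\varphi=\pi$ on $D$ this delivers $\|\varphi(\alpha_{s_{j}}(b))-\Ad_{w_{j}}(\varphi(b))\|<\varepsilon$ for $b\in B$, hence for $a\in\Omega$. Finally I would invoke the freeness of $\mathbb{F}_{r}$: the assignment $s_{j}\mapsto w_{j}$ extends to a homomorphism $v:\mathbb{F}_{r}\rightarrow\mathcal{U}(\mathbb{M}_{d})$, so that $\gamma_{s}=\Ad_{v_{s}}$ is a genuine inner action and condition $(d)$ of Definition~\ref{defnMFaction} holds exactly. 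The main obstacle is exactly this passage from $K$-theory to operators: one must choose $S$ large enough that $\beta$ pulls back to an honest positive unital $K_{0}$-morphism realizable by a $\ast$-homomorphism, and then arrange the two copies of $B$ in $\mathbb{M}_{d}$ so that the matching multiplicities \emph{forced by the invariance of $\beta$} produce one conjugating unitary per generator; the freeness of $\mathbb{F}_{r}$ is precisely what lets these per-generator unitaries assemble, with no multiplicativity defect, into a bona fide action.
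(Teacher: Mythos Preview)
Your proposal is correct and follows essentially the same route as the paper: realize $\beta$ as a unital $\ast$-homomorphism $\pi:D\to\mathbb{M}_{d}$ via Lemma~1.3.4 of~\cite{Ro}, use the invariance of $\beta$ to force equal multiplicities of the two copies of $B$ in $\mathbb{M}_{d}$ and hence conjugating unitaries, then invoke freeness of $\mathbb{F}_{r}$ to assemble these into a genuine action. The only cosmetic differences are that the paper first reduces to finite sets of projections via Lemma~\ref{spectral lemma} and then passes to general self-adjoints, and that the paper's set $S$ explicitly includes the minimal-projection classes of both $B$ and $D$ (you take only those of $D$ and recover invariance on $K_{0}(B)$ by additivity, which is fine); your observation that strict positivity on the minimal projections of $D$ makes $\pi$ injective and hence isometric on $D$ is a slightly cleaner way to get the approximate isometry than the paper's explicit estimates.
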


\begin{proof} Having shown the `only if' above, we embark on the proof of sufficiency. Denote the generators of $\mathbb{F}_{r}$ by $s_{1},\dots,s_{r}$ and set $e_{\mathbb{F}_{r}}=s_{0}$. We abbreviate $\alpha_{s_{i}}=\alpha_{i}$ for $i=0,\dots,r$, and to ease notation write $K_{0}(\alpha_{i})=\hat \alpha_{i}$ to denote the induced order automorphism at the $K_{0}$-level. Let $\delta>0$, to be determined later, and let us first consider the case where we are given a finite set of non-zero projections $p_{1}, \dots,p_{n}$ belonging to a finite dimensional subalgebra $B\subset A$. Find $\delta'=\delta'(\delta,\dim(B))$ as in Lemma~\ref{mtx units pert}. The algebras $B_{i}=\alpha_{i}(B)$ are finite dimensional and admit systems of matrix units $\mathcal{E}_{i}$ for each $i$. Since $A$ is AF, there is a finite dimensional $D\subset A$ containing $B$ with $\mathcal{E}_{i}\subset_{\delta'}D$ for every $i$. Lemma~\ref{mtx units pert} then provides us with unitaries $u_{i}$ in $A$ satisfying $\|u_{i}-1\|<\delta$ and $u_{i}B_{i}u_{i}^{*}\subset D$.

Choose $F=\{s_{1},\dots,s_{r}\}$ and $S\subset K_{0}(A)^{+}$ as $S=\{[p_{1}],\dots [p_{n}], e_{1},\dots e_{k}, f_{1},\dots,f_{l}\}$, where the $e_{j}$ generate $K_{0}(B)$ and the $f_{j}$ generate $K_{0}(D)$. More precisely, $e_{j}=[e_{11}^{(j)}]$ and $f_{j}=[f_{11}^{(j)}]$ where $\{e_{s,t}^{(j)}\}$ and $\{f_{s,t}^{(j)}\}$ are appropriate systems of matrix units for $B$ and $D$ respectively. Since $\alpha$ is $K_{0}$-QD, we obtain the subgroup $H\leq K_{0}(A)$ and the group morphism $\beta:H\rightarrow\mathbb{Z}$ satisfying all the desired properties. Suppose $\beta([1_{A}])=d>0$. By composing with an isomorphism of ordered abelian groups
\[(\mathbb{Z},\mathbb{Z}^{+},d)\cong(K_{0}(\mathbb{M}_{d}), K_{0}(\mathbb{M}_{d})^{+}, [1_{\mathbb{M}_{d}}]),\]
we may assume $\beta$ takes values in $K_{0}(\mathbb{M}_{d})$ and $\beta([1_{A}])=[1_{\mathbb{M}_{d}}]$. Denote by $\iota$ the inclusion $\iota:D\hookrightarrow A$, and note that for any generator $f_{j}$ of $K_{0}(D)$ we have $\hat\iota(f_{j})=f_{j}\in S\subset H$ whence the map
\[\beta\circ\hat\iota:K_{0}(D)\rightarrow K_{0}(\mathbb{M}_{d})\]
is a well defined group homomorphism. Since $K_{0}(D)=\mathbb{Z}^{+}f_{1}+\dots+\mathbb{Z}^{+}f_{l}$, and  $\beta$ takes positive values on $S$, $\beta\circ\hat\iota$ is certainly a positive map. Also, $\beta\circ\hat\iota([1_{A}])=\beta([1_{A}])=[1_{\mathbb{M}_{d}}]$, so there is a unital $*$-homomorphism $\varphi:D\rightarrow \mathbb{M}_{d}$ with $\hat\varphi=\beta\circ\hat\iota$. Appealing to the invariance of $\beta$, we obtain
\begin{multline*}
\hat\varphi(e_{j})=\beta\circ\hat\iota(e_{j})
=\beta\circ\hat\iota([e_{11}^{(j)}])=\beta([e_{11}^{(j)}])\\
=\beta\circ\hat\alpha_{i}([e_{11}^{(j)}])=\beta([\alpha_{i}(e_{11}^{(j)})])=\beta([u_{i}\alpha_{i}(e_{11}^{(j)})u_{i}^{*}])
=\beta\circ\hat\iota([u_{i}\alpha_{i}(e_{11}^{(j)})u_{i}^{*}])\\=\hat\varphi([u_{i}\alpha_{i}(e_{11}^{(j)})u_{i}^{*}])
=[\varphi\circ\mbox{Ad}_{u_{i}}\circ\alpha_{i}(e_{11}^{(j)})]=K_{0}(\varphi\circ\mbox{Ad}_{u_{i}}\circ\alpha_{i})(e_{j}).
\end{multline*}
Therefore the homomorphisms $\varphi|_{B}$ and $\varphi\circ\Ad_{u_{i}}\circ\alpha_{i}|_{B}$ agree at the $K_{0}$ level, as morphisms from $K_{0}(B)$ to $K_{0}(\mathbb{M}_{d})$, and by the finite-dimensionality of $B$ we know that there are unitaries $v_{i}$ in $\mathbb{M}_{d}$ with
\[\Ad_{v_{i}}\circ\varphi|_{B}=\varphi\circ\Ad_{u_{i}}\circ\alpha_{i}|_{B}.\]

By the universal property of the free group, we may define an action $\gamma:\mathbb{F}_{r}\rightarrow$Aut$(\mathbb{M}_{d})$ by $\gamma_{i}:=\gamma_{s_{i}}:=\Ad_{v_{i}}$, which gives us
\[\gamma_{i}\circ\varphi|_{B}=\varphi\circ\mbox{Ad}_{u_{i}}\circ\alpha_{i}|_{B}.\]
By Arveson's extension theorem, we may extend $\varphi$ to a unital completely positive map $\varphi:A\rightarrow\mathbb{M}_{d}$. For each $p_{j}$ and each $s_{i}$ a simple estimate using the fact that $\|1-u_{i}\|<\delta$ gives
\begin{align*}
\|\gamma_{i}\circ\varphi(p_{j})-\varphi\circ\alpha_{i}(p_{j})\|&=\|\varphi\circ\mbox{Ad}_{u_{i}}\circ\alpha_{i}(p_{j})-\varphi\circ\alpha_{i}(p_{j})\|\\
&\leq \|\mbox{Ad}_{u_{i}}\circ\alpha_{i}(p_{j})-\alpha_{i}(p_{j})\|=\|u_{i}\alpha_{i}(p_{j})u_{i}^{*}-\alpha_{i}(p_{j})\|\leq 2\delta.
\end{align*}
Now $\varphi$ is multiplicative on $D$ and hence on the $p_{j}$ and is clearly injective on $\{p_{1},\dots,p_{n}\}$. Indeed, by the condition on $\beta$ and cancellation,
\[\varphi(p_{j})=0\Rightarrow\hat\varphi([p_{j}])=0\Rightarrow \beta([p_{j}])=0\Rightarrow [p_{j}]=0\Rightarrow p_{j}=0,\]
a contradiction.

We now can proceed to the general case. To verify quasidiagonality of the action, it suffices to consider a finite set of self-adjoint elements $a_{1},\dots,a_{m}\in A$ with $\|a_{j}\|\leq 1$, the finite set of standard generators $\{s_{1},\dots, s_{r}\}$ of $\mathbb{F}_{r}$ and an arbitrary $\varepsilon>0$. By Lemma~\ref{spectral lemma}, we find a finite-dimensional subalgebra $B\subset A$ such that for each $a_{j}$
\[\bigg\|a_{j}-\sum_{l=1}^{L_{j}}t_{jl}p_{jl}\bigg\|<\eta\qquad 0\neq p_{jl}\in \mathcal{P}(B),\quad p_{jl}p_{jk}=0,\ l\neq k\]
where $\eta=\eta(\varepsilon)>0$ will be determined later. Set $b_{j}=\sum_{l=1}^{L_{j}}t_{jl}p_{jl}$. Note that fixing $j$, the projections $p_{jl}$ are orthogonal, whence
\[\max_{l}|t_{jl}|=\|b_{j}\|\leq \|b_{j}-a_{j}\|+\|a_{j}\|\leq \eta+1.\]

Apply all our above work to the set of projections $\{p_{jl}\}_{j,l}\subset B$ in order to obtain $\varphi,d,\gamma$, as above for an arbitrary $\delta>0$. We estimate
\begin{align*}
\|\gamma_{i}\circ\varphi(a_{j})&-\varphi\circ\alpha_{i}(a_{j})\|\\&\leq  \|\gamma_{i}\circ\varphi(a_{j})-\gamma_{i}\circ\varphi(b_{j})\|
+\|\gamma_{i}\circ\varphi(b_{j})-\varphi\circ\alpha_{i}(b_{j})\|+\|\varphi\circ\alpha_{i}(b_{j})-\varphi\circ\alpha_{i}(a_{j})\|\\
&\leq 2\|a_{j}-b_{j}\|+\bigg\|\sum_{l=1}^{L_{j}}t_{jl}(\gamma_{i}\circ\varphi(p_{jl})-\varphi\circ\alpha_{i}(p_{jl}))\bigg\|\\
&\leq 2\eta+\sum_{l=1}^{L_{j}}|t_{jl}|\|(\gamma_{i}\circ\varphi(p_{jl})-\varphi\circ\alpha_{i}(p_{jl}))\|\\
&\leq 2\eta+\sum_{l=1}^{L_{j}}(1+\eta)2\delta=2\eta+L_{j}(1+\eta)2\delta \leq 2\eta + L(1+\eta)2\delta
\end{align*}
where $L=\max_{j}L_{j}$. To verify approximate multiplicativity, observe
\[\|a_{i}a_{j}-b_{i}b_{j}\|\leq \|a_{i}a_{j}-a_{i}b_{j}\|+\|a_{i}b_{j}-b_{i}b_{j}\|\leq \|a_{i}\|\|a_{j}-b_{j}\|+ \|a_{j}-b_{j}\|\|b_{j}\|\leq \eta +\eta(1+\eta).\]
A similar estimate yields $\|\varphi(a_{i})\varphi(a_{j})-\varphi(b_{i})\varphi(b_{j})\|\leq \eta +\eta(1+\eta)$. Note that $\varphi$, being multiplicative on all the projections $p_{jl}$, will also be multiplicative on the $b_{j}$, therefore
\[\|\varphi(a_{i}a_{j})-\varphi(a_{i})\varphi(a_{j})\|\leq \|\varphi(a_{i}a_{j})-\varphi(b_{i}b_{j})\|+\|\varphi(a_{i})\varphi(a_{j})-\varphi(b_{i})\varphi(b_{j})\|\leq 2(\eta +\eta(1+\eta)).\]

Now since $\varphi$ is faithful on the $p_{jl}$, $\varphi$ will be isometric on the $b_{j}$. Indeed, using the fact that $\varphi(p_{jl})\varphi(p_{jk})=\varphi(p_{jl}p_{jk})=0$ for $k\neq l$, we have
\[\|\varphi(b_{j})\|=\bigg\|\sum_{l=1}^{L_{j}}t_{jl}\varphi(p_{jl})\bigg\|=\max_{l}|t_{jl}|=\|b_{j}\|.\]

Finally we estimate
\[\left|\|\varphi(a_{j})\|-\|a_{j}\|\right|\leq \left|\|\varphi(a_{j})\|-\|\varphi(b_{j})\|\right|+\left|\|b_{j}\|-\|a_{j}\|\right|\leq \|\varphi(a_{j})-\varphi(b_{j})\|+\|a_{j}-b_{j}\|\leq 2\eta.\]

We need only choose the right $\eta$ and $\delta$. Given $\varepsilon>0$, choose $\eta$ so that $\eta<\varepsilon/4$, and $2(\eta +\eta(1+\eta))<\varepsilon$. Then simply choose $\delta<\varepsilon/(4L(1+\eta))$. By our above estimates this choice will ensure the approximate equivariance $\|\gamma_{i}\circ\varphi(a_{j})-\varphi\circ\alpha_{i}(a_{j})\|<\varepsilon$, the approximate multiplicativity $\|\varphi(a_{i}a_{j})-\varphi(a_{i})\varphi(a_{j})\|<\varepsilon$, and the approximate isometricity $|\|\varphi(a_{j})\|-\|a_{j}\||<\varepsilon$, so that $\mathbb{F}_{r}\curvearrowright A$ is quasidiagonal.
\end{proof}

Combining the last few results with Theorem~\ref{KerrNowak} we obtain:

\begin{cor} Let $\alpha:\mathbb{F}_{r}\rightarrow\Aut(A)$ be an action on a unital AF algebra. The following are equivalent:
\begin{enumerate}
\item $\alpha$ is MF.
\item $\alpha$ is QD.
\item $\alpha$ satisfies $K_{0}$-QD.
\item The reduced crossed product $A\rtimes_{\lambda,\alpha}\mathbb{F}_{r}$ is an MF algebra.
\end{enumerate}
\end{cor}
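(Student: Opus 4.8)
The plan is to assemble the equivalence as a short cycle of implications, drawing almost entirely on results already in hand. Since every AF algebra is nuclear, the equivalence $(1)\Leftrightarrow(2)$ is immediate from Proposition~\ref{QDactionequalsMFaction}, which identifies MF and QD actions on nuclear algebras. The equivalence $(2)\Leftrightarrow(3)$ is precisely Theorem~\ref{QDiffK0QD}. Thus the three action-level conditions (1), (2), (3) collapse into a single statement, and it remains only to tie them to the crossed-product condition (4).

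For $(2)\Rightarrow(4)$ I would invoke the deep theorem of Haagerup and Thorbj{\o}rnsen that $\cstar_{\lambda}(\mathbb{F}_{r})$ is MF. With $\alpha$ quasidiagonal and $\cstar_{\lambda}(\mathbb{F}_{r})$ MF, Theorem~\ref{KerrNowak}(2) yields directly that $A\rtimes_{\lambda,\alpha}\mathbb{F}_{r}$ is MF; note that the alternative Theorem~\ref{KerrNowak}(3) is useless here, since it would demand $\cstar_{\lambda}(\mathbb{F}_{r})$ be QD, which fails for $r\geq2$ by Rosenberg's result. For the return implication $(4)\Rightarrow(1)$ I would appeal to Proposition~\ref{MF cross implies MF act}(3): if the reduced crossed product is MF then the action $\alpha$ is MF (and, since $A$ is nuclear, even QD). This closes the cycle $(4)\Rightarrow(1)\Leftrightarrow(2)\Leftrightarrow(3)\Rightarrow(4)$ and establishes all four equivalences simultaneously.

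The proof of the corollary itself is therefore a matter of bookkeeping, and at this stage there is no genuine obstacle, as each arrow merely cites an earlier statement. The substantive mathematical work has been spent elsewhere: the hard technical step is the sufficiency direction of Theorem~\ref{QDiffK0QD}, which rests on the Stinespring-based perturbation Lemma~\ref{approx ucp by homo} together with the triviality of every order automorphism of $(\mathbb{Z},\mathbb{Z}^{+},d)$, while the single deepest external input is the Haagerup--Thorbj{\o}rnsen theorem furnishing the MF-ness of $\cstar_{\lambda}(\mathbb{F}_{r})$ that powers $(2)\Rightarrow(4)$. Without that theorem the cycle cannot be closed, so I would flag it as the essential ingredient even though it is imported rather than proved here.
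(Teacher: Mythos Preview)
Your proposal is correct and follows essentially the same approach as the paper. The paper's proof simply cites Theorem~\ref{Summarypart1} for $(1)\Leftrightarrow(2)\Leftrightarrow(4)$ and Theorem~\ref{QDiffK0QD} for $(2)\Leftrightarrow(3)$; since Theorem~\ref{Summarypart1}(1) is itself assembled from Proposition~\ref{QDactionequalsMFaction}, Theorem~\ref{KerrNowak}, and Proposition~\ref{MF cross implies MF act} (together with the Haagerup--Thorbj{\o}rnsen theorem to supply the MF-ness of $\cstar_{\lambda}(\mathbb{F}_{r})$), you have merely unpacked that citation into its constituent ingredients.
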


\begin{proof} $(1)\Leftrightarrow(2)\Leftrightarrow(4)$ is Theorem~\ref{Summarypart1}.

$(2)\Leftrightarrow (3)$: This is Theorem~\ref{QDiffK0QD}.

\end{proof}

We seek yet another equivalent K-theoretic condition, this time in the spirit of a coboundary subgroup analogous to N. Brown's main result in~\cite{B}. Now we insist that our discrete group be a free group $\Gamma=\mathbb{F}_{r}=\langle s_{1},\dots,s_{r}\rangle$ of finitely many generators, which acts on a unital AF algebra $A$. Denote this action by $\alpha$ and $\alpha_{i}=\alpha_{s_{i}}$. By the Pimsner-Voiculescu six term exact sequence (consult~\cite{Bl} p.78) and the fact that $K_{1}(A)=\{0\}$ for an AF algebra, the sequence
\[\bigoplus_{j=1}^{r}K_{0}(A)\stackrel{\sigma}{\longrightarrow} K_{0}(A)\stackrel{\hat\iota}{\longrightarrow} K_{0}(A\rtimes_{\lambda,\alpha}\mathbb{F}_{r})\longrightarrow 0\]
is exact, where $\iota:A\hookrightarrow A\rtimes_{\lambda,\alpha}\mathbb{F}_{r}$ is the canonical inclusion, and
\[\sigma(g_{1},\dots,g_{r})=\sum_{j=1}^{r}(g_{j}-\hat\alpha_{j}(g_{j})).\]
Write $H_{\sigma}=\im(\sigma)\leq K_{0}(A)$, so that $K_{0}(A)/H_{\sigma}\cong K_{0}(A\rtimes_{\lambda,\alpha}\mathbb{F}_{r})$. First, a preliminary result about the subgroup $H_{\sigma}$.

\begin{lem}\label{Hsigma} In the above context, the subgroup $H_{\sigma}\leq K_{0}(A)$ is generated by the set \[\{g-\hat\alpha_{w}(g) : g\in K_{0}(A), w\in\mathbb{F}_{r}\}.\]
\end{lem}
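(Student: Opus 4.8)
The plan is to prove the two inclusions separately, the reverse one being immediate and the forward one carrying all the content. Write $K=K_{0}(A)$ and let
\[
H'=\langle\, g-\hat\alpha_{w}(g)\ :\ g\in K,\ w\in\mathbb{F}_{r}\,\rangle
\]
denote the subgroup appearing in the statement. Since each generator $g-\hat\alpha_{j}(g)$ of $H_{\sigma}=\im(\sigma)$ is already of the form $g-\hat\alpha_{w}(g)$ with $w=s_{j}$, the inclusion $H_{\sigma}\subseteq H'$ is free. The real task is $H'\subseteq H_{\sigma}$, and since $H'$ is generated by the elements $g-\hat\alpha_{w}(g)$, it suffices to show that each of these lies in $H_{\sigma}$.

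First I would dispose of the ``one-letter'' cases. By definition of $\sigma$ the element $g-\hat\alpha_{s_{j}}(g)=\sigma(0,\dots,g,\dots,0)$ (with $g$ in the $j$-th coordinate) lies in $H_{\sigma}$ for every $g$ and every generator $s_{j}$. For an inverse generator I would substitute $h=\hat\alpha_{s_{j}^{-1}}(g)$, so that $\hat\alpha_{s_{j}}(h)=g$, giving
\[
g-\hat\alpha_{s_{j}^{-1}}(g)=\hat\alpha_{s_{j}}(h)-h=-\bigl(h-\hat\alpha_{s_{j}}(h)\bigr)\in H_{\sigma}.
\]
Hence for every letter $\ell\in\{s_{1}^{\pm1},\dots,s_{r}^{\pm1}\}$ and every $x\in K$ one has $x-\hat\alpha_{\ell}(x)\in H_{\sigma}$.

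The key step is then a telescoping identity. Writing a general $w\in\mathbb{F}_{r}$ as a word $w=\ell_{1}\ell_{2}\cdots\ell_{n}$ in these letters and setting $k_{i}=\hat\alpha_{\ell_{i+1}\cdots\ell_{n}}(g)$ (so $k_{n}=g$), I would verify, by collapsing the telescoping sum, that
\[
g-\hat\alpha_{w}(g)=\sum_{i=1}^{n}\bigl(k_{i}-\hat\alpha_{\ell_{i}}(k_{i})\bigr),
\]
since the $i$-th summand equals $\hat\alpha_{\ell_{i+1}\cdots\ell_{n}}(g)-\hat\alpha_{\ell_{i}\cdots\ell_{n}}(g)$ and these cancel in consecutive pairs down to $g-\hat\alpha_{w}(g)$. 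Each summand $k_{i}-\hat\alpha_{\ell_{i}}(k_{i})$ is a one-letter expression, hence lies in $H_{\sigma}$ by the previous paragraph, and therefore so does $g-\hat\alpha_{w}(g)$. This closes the forward inclusion and finishes the proof.

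The point I would flag as the main obstacle is precisely the \emph{shape} of the telescoping. The naive decomposition of $g-\hat\alpha_{w}(g)$ strips letters off the left and produces terms of the form $\hat\alpha_{\ell_{1}\cdots\ell_{i-1}}\bigl(g-\hat\alpha_{\ell_{i}}(g)\bigr)$; controlling these would require knowing that $H_{\sigma}$ is invariant under every $\hat\alpha_{w}$, but that invariance is itself equivalent to the lemma, so the route is circular. The correct move is to telescope \emph{from the right}, absorbing each prefix automorphism into the argument $k_{i}$ so that every term is genuinely of the base form $x-\hat\alpha_{\ell}(x)$ that manifestly generates $H_{\sigma}$. (This is, in ring-theoretic language, the statement that the augmentation ideal of $\mathbb{Z}[\mathbb{F}_{r}]$ is generated as a right ideal by the elements $s_{j}-1$; the explicit telescoping keeps the argument self-contained and avoids any appeal to the module structure or to freeness.)
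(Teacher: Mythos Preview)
Your proof is correct and follows essentially the same approach as the paper: the one-letter base case (including the inverse-generator trick of substituting $h=\hat\alpha_{s_{j}^{-1}}(g)$) is identical, and your explicit telescoping sum is just the unrolled version of the paper's induction on word length, which writes $w=aw'$ and splits $g-\hat\alpha_{w}(g)=(g-\hat\alpha_{w'}(g))+(f-\hat\alpha_{a}(f))$ with $f=\hat\alpha_{w'}(g)$. Your closing remark about avoiding the ``wrong'' telescoping (which would leave an outer $\hat\alpha$-automorphism on each summand and require $\hat\alpha$-invariance of $H_{\sigma}$) is a nice observation, though the paper's inductive decomposition sidesteps it in exactly the same way yours does.
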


\begin{proof} One direction being clear from the definition, we claim that every element of the form $g-\hat\alpha_{w}(g)$ will belong to $H_{\sigma}$. To that end, write the alphabet for $\mathbb{F}_{r}$ as
\[\mathcal{A}=\{e,s_{1},\dots s_{r},s_{1}^{-1},\dots,s_{r}^{-1}\}.\]
First note that for every letter $a\in\mathcal{A}$, and for all $g\in K_{0}(A)$, $g-\hat\alpha_{a}(g)\in H_{\sigma}$. For $a=e$ it's clear. Suppose $a=s_{i}$, for some $1\leq i\leq r$, then $g-\hat\alpha_{s_{i}}(g)=\sigma(0,\dots,0,g,0,\dots,0)\in H_{\sigma}$, where $g$ is in the $i$th spot. Next, say $a=s_{i}^{-1}$ for some $1\leq i\leq r$, then
\begin{align*}
g-\hat\alpha_{s_{i}^{-1}}(g)&=\widehat{\alpha_{s_{i}}\circ\alpha_{s_{i}^{-1}}}(g)-\hat\alpha_{s_{i}^{-1}}(g)
=\hat\alpha_{s_{i}}\circ\hat\alpha_{s_{i}^{-1}}(g)-\hat\alpha_{s_{i}^{-1}}(g)\\&=\hat\alpha_{s_{i}}(\hat\alpha_{s_{i}^{-1}}(g))-\hat\alpha_{s_{i}^{-1}}(g)
=-(\hat\alpha_{s_{i}^{-1}}(g)-\hat\alpha_{s_{i}}(\hat\alpha_{s_{i}^{-1}}(g)))=-(f-\hat\alpha_{s_{i}}(f))\in H_{\sigma},
\end{align*}
where $f=\hat\alpha_{s_{i}^{-1}}(g)$. Now let $w\in\mathbb{F}_{r}$ be a (reduced) word in symbols from $\mathcal{A}$. We have shown that if $|w|=1$ the claim holds, so proceed by strong induction on $|w|$. If $|w|=l$, write $w=aw'$ where $a\in\mathcal{A}\setminus\{e\}$ so $|w'|<l$. For $g\in K_{0}(A)$:
\begin{align*}
g-\hat\alpha_{w}(g)&=g-\hat\alpha_{aw'}(g)=g-\hat\alpha_{w'}(g)+\hat\alpha_{w'}(g)-\hat\alpha_{a}\circ\hat\alpha_{w'}(g)\\
&=g-\hat\alpha_{w'}(g)+\hat\alpha_{w'}(g)-\hat\alpha_{a}(\hat\alpha_{w'}(g))=g-\hat\alpha_{w'}(g)+f-\hat\alpha_{a}(f)\in H_{\sigma}
\end{align*}
by the inductive hypothesis, where $f=\hat\alpha_{w'}(g)$. This completes the proof.
\end{proof}

We shall make use of the following key lemma which is due to Spielberg. Consult~\cite{Sp} for a clear argument. Note that this result relies on the theorem of Effros, Handelman and Shen~\cite{EHS} on dimension groups.

\begin{lem}[Spielberg]\label{Spiel} If $K$ is a dimension group and $H$ is a subgroup of $K$ with $H\cap K^{+}=\{0\}$, then there is a dimension group $G$ and a positive group homomorphism $\theta:K\rightarrow G$ such that
\begin{enumerate}
\item $H\subset\ker(\theta)$,
\item $\ker(\theta)\cap K^{+}=\{0\}$.
\end{enumerate}
\end{lem}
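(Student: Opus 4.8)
The plan is to produce $\theta$ as a quotient map $K\to K/N$ for a suitably chosen subgroup $N\supseteq H$, equipped with a carefully chosen order. The key preliminary observation is that the two kernel conditions depend only on the \emph{group-theoretic} kernel: if $\theta=q_{N}$ is the quotient by a subgroup $N$ with $H\subseteq N$ and $N\cap K^{+}=\{0\}$, then automatically $H\subseteq\ker\theta$ and $\ker\theta\cap K^{+}=N\cap K^{+}=\{0\}$, \emph{regardless} of which cone we place on $K/N$. Thus the entire problem reduces to finding such an $N$ for which $K/N$ carries a dimension-group order making $q_{N}$ positive. Since the naive quotient cone $q_{N}(K^{+})$ typically fails both unperforation and interpolation, I will instead aim for a $K/N$ that is \emph{totally ordered}, where the Riesz property comes for free.

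First I would pass to a torsion-free quotient. Writing $\hat H=\{x\in K:nx\in H\text{ for some }n\geq 1\}$ for the isolator of $H$, one checks that $\hat H\cap K^{+}=\{0\}$ (if $x\geq 0$ and $nx\in H$ then $nx\in H\cap K^{+}=\{0\}$, and $K$ is torsion-free since it is unperforated), and that $K/\hat H$ is torsion-free. Replacing $H$ by $\hat H$, I apply Zorn's lemma to the family $\mathcal S$ of \emph{isolated} subgroups $L$ with $\hat H\subseteq L$ and $L\cap K^{+}=\{0\}$; a union of a chain of such subgroups is again isolated and still meets $K^{+}$ only in $0$, so a maximal element $N$ exists, and $\bar K:=K/N$ is torsion-free.

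On $\bar K$ I place the \emph{saturated} cone $\hat C=\{\bar x\in\bar K:k\bar x\in q_{N}(K^{+})\text{ for some }k\geq 1\}$. It is routine that $\hat C$ is an additive submonoid and that $(\bar K,\hat C)$ is unperforated, while antisymmetry $\hat C\cap(-\hat C)=\{0\}$ follows from torsion-freeness of $\bar K$. The crucial point, where maximality enters, is that this order is \emph{total}. Given $\bar z\neq 0$ with lift $z$, the isolator of $N+\mathbb Z z$ is isolated and strictly contains $N$, so by maximality it must meet $K^{+}$ nontrivially; unwinding, there are $a\in K^{+}\setminus\{0\}$ and integers $n\geq 1$, $k$ with $n\,q_{N}(a)=k\bar z$. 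Hence $k\bar z\in q_{N}(K^{+})\subseteq\hat C$, and since $q_{N}(a)\neq 0$ and $\bar K$ is torsion-free we must have $k\neq 0$, which forces $\bar z\in\hat C$ (if $k>0$) or $-\bar z\in\hat C$ (if $k<0$, as $(-k)(-\bar z)=k\bar z\in\hat C$). Thus $\bar K=\hat C\cup(-\hat C)$.

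Finally, a totally ordered group automatically has the Riesz interpolation property (take $\bar c=\max(\bar a_{1},\bar a_{2})$), and $(\bar K,\hat C)$ is unperforated, so by the Effros--Handelman--Shen theorem it is a dimension group $G$. Setting $\theta=q_{N}$ completes the argument: $\theta$ is positive because $q_{N}(K^{+})\subseteq\hat C$, while $H\subseteq\hat H\subseteq N=\ker\theta$ and $\ker\theta\cap K^{+}=\{0\}$. I expect the main obstacle to be exactly the interpolation property, since quotient cones of dimension groups are in general neither unperforated nor Riesz; the device that resolves it is to abandon any attempt to keep $G$ close to $K/H$ and instead use maximality to collapse $K/N$ all the way to a totally ordered group, together with the isolator reductions needed to keep the saturated cone antisymmetric.
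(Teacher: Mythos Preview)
Your argument is correct. The paper does not supply its own proof of this lemma; it attributes the result to Spielberg and directs the reader to~\cite{Sp}, noting only that the proof relies on the Effros--Handelman--Shen theorem. Your approach---replace $H$ by its isolator, use Zorn's lemma to enlarge to a maximal isolated subgroup $N$ with $N\cap K^{+}=\{0\}$, and then exploit maximality to show that $K/N$ with the saturated cone is totally ordered (hence trivially Riesz and unperforated)---is in fact essentially the argument Spielberg gives.

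One small point you might make explicit: your final appeal to EHS requires $\bar K=K/N$ to be countable. This is automatic here, since a dimension group in the sense the paper uses (an inductive limit of simplicial groups $(\mathbb{Z}^{p_i},\mathbb{Z}^{p_i}_{\geq 0})$) is necessarily countable, and the subsequent application in Proposition~\ref{KoEquiv} does use that $G$ is such an inductive limit. Everything else---the verification that $\hat C$ is a strict cone (using $N\cap K^{+}=\{0\}$ and torsion-freeness of $K/N$), the case split on the sign of $k$ in the totality step, and the observation that $q_N(K^{+})\subseteq\hat C$ gives positivity of $\theta$---is in order.
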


\begin{prop}\label{KoEquiv}Let $\alpha:\mathbb{F}_{r}\rightarrow \Aut(A)$ be an action of a free group on a unital AF algebra. Then the following are equivalent.
\begin{enumerate}
\item $\alpha$ is $K_{0}$-QD.
\item $H_{\sigma}\cap K_{0}(A)^{+}=\{0\}$.
\end{enumerate}
\end{prop}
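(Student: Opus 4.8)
The plan is to prove the two implications separately, using Spielberg's Lemma~\ref{Spiel} as the engine for $(2)\Rightarrow(1)$ and Lemma~\ref{Hsigma} for $(1)\Rightarrow(2)$; throughout I use that $K_{0}(A)$ is a dimension group (as $A$ is unital AF) and that $[1_{A}]$ is an order unit, so that $K_{0}(A)=K_{0}(A)^{+}-K_{0}(A)^{+}$.

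For $(1)\Rightarrow(2)$ I would argue by contradiction. Suppose $0\neq g\in H_{\sigma}\cap K_{0}(A)^{+}$. By Lemma~\ref{Hsigma} (indeed by the telescoping computation inside its proof) I may write $g$ as a finite $\mathbb{Z}$-combination of elementary coboundaries $x-\hat\alpha_{s_{j}}(x)$ in which only single generators $s_{1},\dots,s_{r}$ occur and $x\in K_{0}(A)$. Since $[1_{A}]$ is an order unit, each such $x$ splits as $x=x^{+}-x^{-}$ with $x^{\pm}\in K_{0}(A)^{+}$, so $g$ becomes a finite combination of terms $x^{\pm}-\hat\alpha_{s_{j}}(x^{\pm})$ with all $x^{\pm}$ positive. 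I then set $S:=\{g\}\cup\{x^{\pm}\}\subset K_{0}(A)^{+}$ and let $F$ be the (finite) set of generators $s_{j}$ that appear, and feed $S,F$ into the $K_{0}$-QD hypothesis to obtain $H\leq K_{0}(A)$ and a homomorphism $\beta:H\rightarrow\mathbb{Z}$. Every element occurring in the decomposition of $g$ lies in $S_{F}\subseteq H$, so I may apply $\beta$ termwise; the invariance clause (3) kills each coboundary term, giving $\beta(g)=0$, whereas the faithfulness clause (2) forces $\beta(g)>0$ because $g\in S$ is nonzero. This contradiction yields $(2)$.

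For $(2)\Rightarrow(1)$ I start from $H_{\sigma}\cap K_{0}(A)^{+}=\{0\}$ and apply Lemma~\ref{Spiel} with $K=K_{0}(A)$ and $H=H_{\sigma}$, obtaining a countable dimension group $G$ and a positive homomorphism $\theta:K_{0}(A)\rightarrow G$ with $H_{\sigma}\subseteq\ker\theta$ and $\ker\theta\cap K_{0}(A)^{+}=\{0\}$. The first property makes $\theta$ genuinely invariant, $\theta(\hat\alpha_{s}(x))=\theta(x)$, since each $x-\hat\alpha_{s}(x)\in H_{\sigma}$; the second sends every nonzero positive element to a nonzero positive element of $G$. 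Given finite $S\subset K_{0}(A)^{+}$ and $F\subset\mathbb{F}_{r}$, I take $H$ to be the subgroup generated by $S_{F}\cup\{[1_{A}]\}$, which is finitely generated and free. Writing $G=\varinjlim(\mathbb{Z}^{n_{k}},\phi_{k})$ via Effros--Handelman--Shen, I lift $\theta|_{H}$ through some stage $\phi_{k,\infty}$ to a homomorphism $\tilde\theta:H\rightarrow\mathbb{Z}^{n_{k}}$ (possible since $H$ is free), then push forward to a later stage $l$ chosen large enough that, simultaneously for the finitely many relevant elements: (a) $\phi_{k,l}\tilde\theta(h)$ is a nonzero positive vector for each nonzero $h\in S\cup\{[1_{A}]\}$, and (b) $\phi_{k,l}\tilde\theta$ identifies $\hat\alpha_{s}(h)$ with $h$ for all $h\in S$ and $s\in F$. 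Composing with the sum-of-coordinates map $\mathbb{Z}^{n_{l}}\rightarrow\mathbb{Z}$ defines $\beta$; positivity of coordinates makes $\beta>0$ on $S$ and on $[1_{A}]$, and (b) supplies the equivariance, verifying all three clauses of $K_{0}$-QD.

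The main obstacle is precisely this passage to a finite stage in $(2)\Rightarrow(1)$: Spielberg's lemma only lands in an abstract dimension group, while $K_{0}$-QD demands an honest $\mathbb{Z}$-valued functional that is \emph{strictly} positive on a prescribed finite set. There is in general no positive homomorphism $G\rightarrow\mathbb{Z}$ at all (think of a copy of $\mathbb{Q}$ or $\mathbb{Z}[1/2]$ sitting in $G$), so I cannot merely post-compose $\theta$ with a state of $G$. The resolution, and the delicate point to get right, is that only finitely many elements of $G$ are in play, so they together with their finitely many positivity and invariance relations are all witnessed at a single simplicial stage $\mathbb{Z}^{n_{l}}$ of the direct limit, where coordinate-summing is strictly positive on genuine positive vectors. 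I will have to check carefully that one common stage $l$ can be arranged to realize non-vanishing, positivity, and the coboundary identities at once, which is exactly where the finiteness of $S$ and $F$ does the work.
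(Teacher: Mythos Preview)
Your proposal is correct and follows essentially the same route as the paper: both directions use the same ingredients (splitting into positive parts and applying the $K_{0}$-QD data for $(1)\Rightarrow(2)$; Spielberg's lemma followed by a finite-stage reduction in the direct limit and a coordinate-sum functional for $(2)\Rightarrow(1)$). The only cosmetic difference is that the paper derives invariance of $\beta$ directly from $H_{\sigma}\subset\ker\theta$ (so your condition (b) is automatic once $\beta$ factors through $\theta$), and in $(1)\Rightarrow(2)$ it uses the defining form of $H_{\sigma}=\im(\sigma)$ rather than Lemma~\ref{Hsigma}.
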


\begin{proof} $(1)\Rightarrow (2)$: Suppose $x=\sum_{j=1}^{r}(g_{j}-\hat\alpha_{j}(g_{j}))>0$ in $K_{0}(A)^{+}$. For each $j$ write $g_{j}=x_{j}-y_{j}$ with $x_{j},y_{j}\in K_{0}(A)^{+}$. By setting $S=\{x_{1},\dots,x_{r},y_{1},\dots, y_{r},x\}$ and $F=\{s_{1},\dots, s_{r}\}$ as our finite sets, we obtain a suitable $H$ and $\beta:H\rightarrow \mathbb{Z}$ with the desired conditions in the definition of $K_{0}$-QD. Observe then that
\[0<\beta(x)=\beta\bigg(\sum_{j=1}^{r}(g_{j}-\hat\alpha_{j}(g_{j}))\bigg)=\sum_{j=1}^{r}(\beta(g_{j})-\beta(\hat\alpha_{j}(g_{j})))=\sum_{j=1}^{r}(\beta(g_{j})-\beta(g_{j}))=0,\]
a contradiction. Therefore, $x=0$, and $(2)$ holds.\\

$(2)\Rightarrow (1)$: Since $H_{\sigma}\cap K_{0}(A)^{+}=\{0\}$ using Lemma~\ref{Spiel} we get a dimension group $(G,G^{+})$ and positive group homomorphism $\theta:K_{0}(A)\rightarrow G$ satisfying $H_{\sigma}\subset\ker(\theta)$ and $\ker(\theta)\cap K_{0}(A)^{+}=\{0\}$. Given finite subsets $F\subset\mathbb{F}_{r}$ and $S\subset K_{0}(A)^{+}$, consider the finitely generated subgroup $H$ of $K_{0}(A)$ given by
\[H=\langle\hat\alpha_{s}(x) : x\in S\cup\{[1]\}, s\in F\cup\{e\}\rangle.\]
This $H$ will be the desired subgroup for verifying that $\alpha$ is $K_{0}$-QD, and so what is needed is the correct $\beta:H\rightarrow\mathbb{Z}$. Restricting $\theta$ to $H$  we note that the subgroup $\theta(H)\leq G$ is also generated by the finitely many positive elements  $\theta(\hat\alpha_{t}(x))$ for $x\in S\cup\{[1]\}$ and $t\in F\cup\{e\}\}$. To ease notation, label these as $k_{1},\dots ,k_{n}\in G^{+}$. Since $G$ is a dimension group, write $(G,(\beta_{i}))$ for the limit of an inductive sequence of ordered abelian groups $(G_{i},G_{i}^{+})$
\[G_{1}\stackrel{h_{1}}{\longrightarrow}G_{2}\stackrel{h_{2}}{\longrightarrow}G_{3}\stackrel{h_{3}}{\longrightarrow}\dots\]
where the $h_{i}$ are positive group homomorphisms, the $\beta_{i}:G_{i}\rightarrow G$ are the connecting positive group homomorphisms and each $(G_{i},G_{i}^{+})$ is order isomorphic to $(\mathbb{Z}^{p_{i}},\mathbb{Z}^{p_{i}}_{\geq0})$ for some positive integers $p_{i}$. There is an $m$ large enough so that $k_{1},\dots,k_{n}\in\beta_{m}(G_{m}^{+})$. Set $k_{i}=\beta_{m}(y_{i})$ for some $y_{i}\in G_{m}^{+}$. The group $G_{m}$ is abelian and finitely generated, and so is its subgroup $K=\ker(\beta_{m})$, say $K=\langle g_{1},\dots ,g_{l}\rangle$. Now choose $k$ large enough so that $k\geq m$ and such that $h_{k,m}(g_{j})=0$ for all $j=1,\dots, l$. Identify $(G_{k},G_{k}^{+})=(\mathbb{Z}^{p},\mathbb{Z}^{p}_{\geq0})$ for some $p\in\mathbb{N}$ and define $\psi:G_{k}\rightarrow\mathbb{Z}$ by
\[\psi((z_{1},\dots,z_{p}))=\sum_{i=1}^{p}z_{i}.\]
Clearly, $\psi$ is a positive group homomorphism which is faithful on the positive cone $G_{k}^{+}$. We now may define $\phi:\beta_{m}(G_{m})\rightarrow\mathbb{Z}$ by $\phi(\beta_{m}(g)):=\psi(h_{k,m}(g))$. Observe that, by our choice of $k$,
\begin{align*}\beta_{m}(g)= \beta_{m}(g')&\Leftrightarrow g-g'\in K \Rightarrow h_{k,m}(g-g')=0 \Leftrightarrow h_{k,m}(g)=h_{k,m}(g')\\&\Leftrightarrow \psi(h_{k,m}(g))=\psi(h_{k,m}(g'))\Leftrightarrow\phi(\beta_{m}(g))=\phi(\beta_{m}(g')),
\end{align*}
verifying that $\phi$ is well defined. It is routine to check that $\phi$ is additive on $\beta_{m}(G_{m})$. Naturally, we now compose and define
\[\beta:=\phi\circ\theta|_{H}:H\longrightarrow\mathbb{Z}.\]
Since the $t_{i}$ lie in $\beta_{m}(G_{m})$, we have $\theta(H)=\langle t_{1},\dots, t_{n}\rangle\leq\beta_{m}(G_{m})$ and thus $\beta$ is a well defined group homomorphism. Let $x\in S$. Then from our notation $\theta(x)=t_{i}$ for some $i$. So
\[\beta(x)=\phi(\theta(x))=\phi(t_{i})=\phi(\beta_{m}(y_{i}))=\psi(h_{k,m}(y_{i}))\geq0,\]
since $\psi$ and $h_{k,m}$ are positive maps and $y_{i}\in G_{m}^{+}$. To see that $\beta$ is faithful on $S$, we use the fact that $\psi$ is faithful on $G_{k}^{+}$: if $x\in S$ then
\begin{align*}
\beta(x)=0&\Rightarrow \psi(h_{k,m}(y_{i}))=0\Rightarrow h_{k,m}(y_{i})=0\Rightarrow \beta_{k}(h_{k,m}(y_{i}))=0\Rightarrow \beta_{m}(y_{i})=0\\ &\Rightarrow t_{i}=0\Rightarrow \theta(x)=0\Rightarrow x\in\ker(\theta)\cap K_{0}(A)^{+}\Rightarrow x=0.
\end{align*}

Finally, we verify the invariance of $\beta$. For $x\in S$ and $s\in F$, Lemma~\ref{Hsigma} ensures that $x-\hat\alpha_{s}(x)$ belongs to $H_{\sigma}$, which  in turn lives inside $\ker(\theta)$, so that $\theta(x-\hat\alpha_{s}(x))=0$. Therefore,
\[\beta(x-\hat\alpha_{s}(x))=\phi\circ\theta(x-\hat\alpha_{s}(x))=0 \Longrightarrow \beta(x)=\beta(\hat\alpha_{s}(x)),\]
completing the proof.

\end{proof}

While MF algebras are always stably finite, the authors of~\cite{BK} remarked that there are no known examples of stably finite \cstar-algebras which are not MF. With the right $K_{0}$ condition at our disposal we can give an answer to this inquiry for a special class of crossed product algebras. Here is the crucial result, reminiscent of N. Brown's main result in~\cite{B}.

\begin{thm}\label{Main2} Let $A$ be a unital AF algebra and $\alpha:\mathbb{F}_{r}\rightarrow\Aut(A)$ an action of the free group on $r$ generators. Then the following are equivalent.

\begin{enumerate}
\item $\alpha$ is MF.
\item $\alpha$ is quasidiagonal.
\item $\alpha$ is $K_{0}$-QD.
\item $H_{\sigma}\cap K_{0}(A)^{+}=\{0\}.$
\item The reduced crossed product $A\rtimes_{\lambda,\alpha}\mathbb{F}_{r}$ is MF.
\item The reduced crossed product $A\rtimes_{\lambda,\alpha}\mathbb{F}_{r}$ is stably finite.
\end{enumerate}

\end{thm}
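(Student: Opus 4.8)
The plan is to lean on the equivalences already in hand and splice in the stable-finiteness condition (6) through the coboundary group $H_{\sigma}$. By Proposition~\ref{QDactionequalsMFaction} (AF algebras are nuclear), Theorem~\ref{QDiffK0QD}, and Proposition~\ref{KoEquiv}, conditions (1)--(4) are mutually equivalent; and by Theorem~\ref{Summarypart1}(1) together with the Haagerup--Thorbj\o rnsen theorem that $\cstar_{\lambda}(\mathbb{F}_{r})$ is MF, condition (5) is equivalent to (1). It therefore suffices to fit (6) into this chain, which I would do by proving the two implications $(5)\Rightarrow(6)$ and $(6)\Rightarrow(4)$; since $(4)$ is already known to be equivalent to $(5)$, this folds (6) into the whole equivalence class.

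The implication $(5)\Rightarrow(6)$ is immediate: as recorded in Section~2, every MF algebra is stably finite.

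The heart of the matter is $(6)\Rightarrow(4)$, which I would establish by contraposition. Assume $H_{\sigma}\cap K_{0}(A)^{+}\neq\{0\}$ and fix a nonzero $x\in H_{\sigma}\cap K_{0}(A)^{+}$. Since $A$ is AF it has cancellation, so $x=[p]_{0}$ for a genuine nonzero projection $p\in\mathbb{M}_{k}(A)$. Writing $B=A\rtimes_{\lambda,\alpha}\mathbb{F}_{r}$ and $\iota\colon A\hookrightarrow B$ for the canonical inclusion, exactness of the Pimsner--Voiculescu sequence gives $H_{\sigma}=\im(\sigma)=\ker(\hat\iota)$, so $\hat\iota(x)=[\iota(p)]_{0}=0$ in $K_{0}(B)$ while $\iota(p)\neq0$. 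By the definition of the class in the Grothendieck group $K_{0}(B)$, the vanishing $[\iota(p)]_{0}=0$ furnishes a projection $g$ over $B$ with $\iota(p)\oplus g\sim g$ in the Murray--von Neumann sense; thus $\iota(p)\oplus g$ is equivalent to its subprojection $0\oplus g$, which is proper because $\iota(p)\neq0$, so $\iota(p)\oplus g$ is an infinite projection. An infinite projection in a matrix algebra over $B$ precludes stable finiteness (one builds a non-unitary isometry by adjoining the complementary projection), so $B$ is not stably finite, which is exactly $\neg(6)$.

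The main obstacle is this single new implication $(6)\Rightarrow(4)$, and within it the key observation that a nonzero projection whose $K_{0}$-class vanishes in $B$ necessarily produces an infinite projection; once this is seen, everything else reduces to the already-proven $K$-theoretic equivalences. I expect no difficulty in the supporting facts (cancellation in AF algebras, the identification $H_{\sigma}=\ker(\hat\iota)$ from Pimsner--Voiculescu, and the elementary passage from an infinite projection to failure of stable finiteness), so the bulk of the argument is genuinely the structural reduction rather than any delicate estimate.
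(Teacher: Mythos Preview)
Your proposal is correct and follows essentially the same route as the paper: the equivalences $(1)$--$(5)$ are assembled from the same prior results, $(5)\Rightarrow(6)$ is the general fact that MF implies stably finite, and your contrapositive argument for $(6)\Rightarrow(4)$ is exactly the paper's direct argument unpacked (the paper simply asserts that stable finiteness forces $\iota(p)=0$ from $[\iota(p)]_{0}=0$, whereas you spell out the intermediate step via an infinite projection). No gap.
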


\begin{proof} For such an action, the equivalences $(1)\Leftrightarrow(2)\Leftrightarrow(3)\Leftrightarrow(4)\Leftrightarrow(5)$ are contained in Theorem~\ref{Summarypart1}, Propositions~\ref{Main1} and~\ref{KoEquiv}. Now every MF algebra is stably finite, so it suffices to show $(6)\Rightarrow(4)$. To that end, suppose $x\in H_{\sigma}\cap K_{0}(A)^{+}$. Then $x=[p]$ where $p\in\mathcal{P}_{\infty}(A)$. From the Pimsner-Voiculescu exact sequence, $\ker(\hat\iota)=H_{\sigma}$, so $0=\hat\iota(x)=\hat\iota([p]=[\iota(p)]$ in $K_{0}(A\rtimes_{\lambda,\alpha}\mathbb{F}_{r})$. However, the stable finiteness of $A\rtimes_{\lambda,\alpha}\mathbb{F}_{r}$  ensures $\iota(p)=0$, which implies that $p=0$ since $\iota$ is inclusion. Thus $x=0$ and $(3)$ holds.

\end{proof}

\begin{example} If $A$ is an AF-algebra and $(A,\mathbb{F}_{r},\alpha)$, $(A,\mathbb{F}_{r},\beta)$ are \cstar-dynamical systems which agree on $K$-theory, that is $\hat\alpha=\hat\beta$, then Theorem~\ref{Main2} ensures that $\alpha$ is MF if and only if $\beta$ is MF. In particular, recall that actions $\alpha$ and $\beta$ are said to be \emph{exterior equivalent} provided there is a map $u:\Gamma\rightarrow\mathcal{U}(A)$ which satisfies the cocycle condition $u_{st}=u_{s}\alpha_{s}(u_{t})$ and $\beta_{s}=\Ad_{s}\circ\alpha_{s}$ for each $s,t\in\Gamma$. In this case $\alpha$ and $\beta$ clearly agree on $K$-theory and the above discussion applies.
\end{example}


\begin{thebibliography}{99}

\bibitem{Bl} B. Blackadar, \emph{K-Theory for Operator Algebras}, second edition. Mathematical Sciences Research Institute Publications, 5. Berkeley, CA, 1998.

\bibitem{BK} B. Blackadar and E. Kirchberg, Generalized Inductive Limits of Finite-Dimensional C*-Algebras, \emph{Math. Ann.} \textbf{307} (1997), 343-380.


\bibitem{B} N. P. Brown, AF Embeddability of Crossed Products of AF Algebras by the Integers, \emph{J. Funct. Anal.} \textbf{160} (1998), 150-175.

\bibitem{B2} N. P. Brown, On quasidiagonal \cstar-algebras. In: \emph{Operator algebras and applications}, \textbf{38}, Adv. Stud. Pure Math., Math. Soc. Japan, Tokyo, (2004), 19-64.

\bibitem{BO} N. P. Brown and N. Ozawa. \emph{\cstar-Algebras and Finite-Dimensional Approximations}. Graduate Studies in
Mathematics, 88. Amer. Math. Soc., Providence, RI, 2008.

\bibitem{BDF}L. Brown, R. Douglas and P. Fillmore, Extensions of \cstar-algebras and $K$-homology \emph{Ann. Math.}\textbf{105},(1977), 265–324.

\bibitem{Ch} E. Christensen, Near Inclusion of \cstar-Algebras, \emph{Acta Math.} \textbf{144} (1980), 249-265.

\bibitem{C} C. Conley, Isolated invariant sets and the Morse index \emph{CBMS Regional Conference Series in Mathematics}, \textbf{38}. Amer. Math. Soc., Providence, RI, 1978.

\bibitem{Da} K. R. Davidson, \emph{C*-Algebras by Example}. Fields Institute Monographs, 6. Amer. Math. Soc., Providence, RI, 1996.

\bibitem{EHS} E.G. Effros, D. Handelman, and C.-L. Shen, Dimension Groups and their Affine Representations, \emph{Amer. J. Math.} \textbf{102} (1980), 385-407.

\bibitem{HadS} D. Hadwin and J. Shen, Some Examples of Blackadar and Kirchberg’s MF Algebras, \emph{preprint}, 2008.

\bibitem{HadR} D. Hadwin. Strongly quasidiagonal \cstar-algebras. With an appendix by J. Rosenberg. \emph{J. Operator Theory} \textbf{18}(1987), 3–18.

\bibitem{HT} U. Haagerup and S. Thorbj{\o}rnsen, A new application of random matrices: $\Ext(\cstar_{\lambda}(\mathbb{F}_{2}))$ is not a group.
\emph{Ann. of Math. (2)}  \textbf{162} (2005), 711–775.



\bibitem{KN} D. Kerr and P. W. Nowak, Residually finite actions and crossed products, \emph{Ergodic Theory Dynam. Systems} \textbf{32} (2012), 1585-1614.

\bibitem{Or} S. Orfanos, Quasidiagonality of crossed products, \emph{J. Operator Theory} \textbf{66(1)} (2011), 209–216.

\bibitem{Ph} N. C. Philips, Crossed Product C*-Algebras and Minimal Dynamics, \emph{preprint}, 2013.

\bibitem{Pi} M. V. Pimsner, Embedding some transformation group \cstar-algebras into AF-algebras. \emph{Ergod. Th. Dynam.
Sys.} \textbf{3} (1983), 613–626.


\bibitem{PV} M. Pimsner and D. Voiculescu, Exact sequences for K-groups and Ext-groups of certain cross-product \cstar-algebras, \emph{J. Operator Theory}, \textbf{4} (1980), no. 1, 93–118.

\bibitem{Pu} I.F. Putnam, On the topological stable rank of certain transformation group \cstar-algebras. \emph{Ergodic Theory Dynam. Systems} \textbf{10} (1990), 197-207.

\bibitem{Ro} M. R{\o}rdam and E. St{\o}rmer. \emph{Classification of Nuclear \cstar-algebras. Entropy in Operator Algebras}. Encyclopaedia of Mathematical Sciences, 126. Operator Algebras and Non-commutative Geometry, VII. Springer-Verlag, Berlin, 2002.


\bibitem{Sp} J.S. Spielberg, Embedding \cstar-algebra Extensions into AF Algebras, \emph{J. Funct. Anal.} \textbf{81} (1988), 325-344.

\bibitem{Vo} D. Voiculescu, Almost Inductive Limit Automorphisms and embeddings into AF-Algebras, \emph{Ergodic Theory and Dynam. Systems} \textbf{6} (1986), 475-484.

\bibitem{V1} D. Voiculescu, The topological version of free entropy, \emph{Lett. Math. Phys.} \textbf{62} (2002), no. 1, 71-82.


\bibitem{Wi} D. P. Williams. Crossed Products of \cstar-Algebras. Mathematical Surveys and Monographs, 134. Amer. Math.
Soc., Providence, RI, 2007.
\end{thebibliography}
\end{document}